\numberwithin{equation}{section}
\def\hangbox to #1 #2{\vskip3pt\hangindent #1\noindent \hbox to #1{#2}$\!\!$}
\newtheorem{thm}{Theorem}[section]
\newtheorem{lem}[thm]{Lemma}
\newtheorem{cor}[thm]{Corollary}
\theoremstyle{definition}
\newtheorem{claim}[thm]{Claim}
\newtheorem{defn}[thm]{Definition}
\newtheorem{defin}[thm]{Definition}
\newtheorem{problem}[thm]{Problem}
\theoremstyle{remark}
\def\C{{\mathbb C}}
\def\N{{\mathbb N}}
\def\R{{\mathbb R}}
\def\Z{{\mathbb Z}}
\newcommand{\ra}{\rangle}
\newcommand{\la}{\langle}
\def\sfrac#1#2{\kern.1em\raise.5ex\hbox{$#1$}
        \kern-.1em/\kern-.05em\lower.25ex\hbox{$#2$}}
\def\vp{\varepsilon}
\def\dim{\operatorname{dim}}
\newcommand{\fw}{\text{\fw}}
\begin{document}
\allowdisplaybreaks
\title{The discretization problem for continuous frames}

\author{ Daniel Freeman}
\address{Department of Mathematics and Statistics\\
St Louis University\\
St Louis, MO 63103  USA} \email{dfreema7@slu.edu}

\author{Darrin Speegle}
\address{Department of Mathematics and Statistics\\
St Louis University\\
St Louis, MO 63103  USA} \email{speegled@slu.edu}

\thanks{
The first author was supported by grant 353293 from the Simon's foundation, and the second author  was supported by grant 244953 from the Simon's foundation.}

\thanks{2010 \textit{Mathematics Subject Classification}: 42C15, 81R30}

\begin{abstract}
 We characterize when a coherent state or continuous frame for a Hilbert space may be sampled to obtain a frame, which solves the discretization problem for continuous frames.  In particular, we
prove that every bounded continuous frame for a Hilbert space may be sampled to obtain a frame.  We give multiple applications to different classes of frames such as scalable frames and Gabor frames.
\end{abstract}

\keywords{frames, continuous frames, coherent states, sampling}

\maketitle

%%%%%%%%%%%%%%%%%%%%%%
%  \baselineskip=17pt      %% draft mode   %
%%%%%%%%%%%%%%%%%%%%%%

\section{Introduction}

\begin{defin}
A collection of vectors $(x_{j})_{j\in J}$ in a Hilbert space $H$ is called a {\em frame} or a {\em discrete frame} for $H$ if there exists positive constants $A$ and $B$ (called lower and upper {\em frame bounds} respectively) such that
\begin{equation}\label{e:frame}
A\|x\|^2\leq \sum_{j\in J} |\langle x, x_i\rangle|^2 \leq
B\|x\|^2\quad\textrm{for all }x\in H.
\end{equation}
The frame is called {\em tight} if $A=B$ and the frame is called {\em
Parseval} if $A=B=1$.
\end{defin}

A frame can be thought of as a (possibly) redundant coordinate system in the sense that a frame can contain more vectors than are necessary to represent each vector in the Hilbert space.
One way of
 interpreting the frame inequality \eqref{e:frame} is  that a frame for a Hilbert space $H$ is a collection of vectors in $H$ indexed by a countable set $J$ so that the norm in $\ell_2(J)$ of the frame coefficients is equivalent to the norm on $H$.  This notion can be nicely generalized from the discrete to the continuous setting by instead of summing over a countable set $J$, we integrate over a measure space $X$.  That is, a continuous frame for a Hilbert space $H$ is a collection of vectors indexed by a measure space $X$ so that the norm of the frame coefficients in $L_2(X)$ is equivalent to the norm on $H$.  The following definition, with which we are following \cite{C}, formalizes this notion.

\begin{defn}
Let $(X, \Sigma, \mu)$ be a positive, $\sigma$-finite measure space and let $H$ be a separable Hilbert space.  A measurable function $\Psi:X\rightarrow H$ is a {\em continuous frame} of $H$ with respect to $\mu$ if there exists constants $A,B>0$ such that
\begin{equation}
A\|x\|^2\leq \int |\langle x, \Psi(t)\rangle|^2 d\mu(t)\leq B \|x\|^2\quad\quad\forall x\in H.
\end{equation}
The constant $A$ is called a {\em lower frame bound} and the constant $B$ is called an {\em upper frame bound}.  If $A=B$ then the continuous frame is called {\em tight} and if $A=B=1$ then the continuous frame is called {\em Parseval} or a {\em coherent state}.
We say that $\Psi:X\rightarrow H$ is {\em Bessel} if it has a finite upper frame bound $B$, but does not necessarily have a positive lower frame bound $A$.
\end{defn}

Note that if $X$ is a countable set with counting measure, then $\Psi:X\rightarrow H$ is a continuous frame of $H$  is equivalent to $(\Psi(t))_{t\in X}$ being a frame of $H$.  Thus, frames are a special case of continuous frames.      Often the definition of continuous frames in the literature includes some additional topological structure of the measure space $X$ and continuity of the map $\Psi:X\rightarrow H$, but we do not require this.

 Continuous frames and coherent states are widely used in mathematical physics and are particularly prominent in quantum mechanics and quantum optics. The theory of coherent states was initiated by Schr\"odinger in 1926 \cite{S} and was generalized to continuous frames by Ali, Antoine, and Gazeau \cite{AAG1}.  Though
 coherent states naturally  characterize many different physical properties,  discrete frames are much better suited for  computations.  Because of this, when working with coherent states and continuous frames, researchers often create a discrete frame by sampling the continuous frame and then use the discrete frame for computations instead of the entire continuous frame.
Specifically, if $\Psi:X\rightarrow H$ is a continuous frame and $(t_j)_{j\in J}\subseteq X$ then $(\Psi(t_j))_{j\in J}\subseteq H$ is called a {\em sampling} of $\Psi$.   The notion of creating a frame by sampling a coherent state has its origins in the very start of modern frame theory.
Indeed, Daubechies, Grossmann, and Meyer \cite{DGM} popularized modern frame theory in their seminal paper ``Painless nonorthogonal expansions", and their constructions of frames for Hilbert spaces were all done by sampling different coherent states.  Another example which is of particular interest for frame theorists is that of Gabor frames, which are samplings of the short time Fourier transform at a lattice, and we explore this further in Section \ref{S:app}.

The discretization problem, posed by Ali, Antoine, and Gazeau in their physics textbook {\em Coherent States, Wavelets, and Their Generalizations} \cite{AAG2},  asks when a continuous frame of a Hilbert space can be sampled to obtain a frame.  They state that a positive answer to the question is crucial for practical applications of coherent states, and chapter 16 of the book is devoted to the discretization problem.
A solution for certain types of continuous frames was obtained by Fornasier and Rauhut using the theory of co-orbit spaces \cite{FR}.
We solve the discretization problem in its full generality  with the following theorem which characterizes exactly when a continuous frame may be sampled to obtain a frame.

\begin{thm}\label{T:D}
Let $(X,\Sigma)$ be a measure space such that every singleton is measurable and let $\Psi:X\rightarrow H$ be measurable.  There exists $(t_j)_{j\in J}\in X^J$ such that $(\Psi(t_j))_{j \in J}$
 is a frame of $H$ if and only if there exists a positive, $\sigma$-finite measure $\nu$ on $(X,\Sigma)$ so that  $\Psi$ is a continuous frame of $H$ with respect to $\nu$ which is bounded $\nu$-almost everywhere.  In particular, every bounded continuous frame may be sampled to obtain a frame.
\end{thm}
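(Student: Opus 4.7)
The plan is to prove the two directions of the equivalence separately. For the implication from a sampling to a suitable measure, given $(t_j)_{j \in J}$ such that $(\Psi(t_j))_{j \in J}$ is a discrete frame, set $\nu = \sum_{j \in J} \delta_{t_j}$. Since $J$ is countable and singletons are measurable, $\nu$ is positive and $\sigma$-finite on $(X, \Sigma)$, and $\int |\la x, \Psi(t)\ra|^2 \, d\nu(t) = \sum_j |\la x, \Psi(t_j)\ra|^2$ obeys the continuous frame inequality with the same constants. The upper frame bound, applied to $x = \Psi(t_j)$, gives $\|\Psi(t_j)\|^2 \le B$, so $\Psi$ is bounded $\nu$-almost everywhere.

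For the substantive direction, I want to show that any bounded continuous frame admits a sampling which is a frame of $H$. First, normalize: let $S = \int \Psi(t) \otimes \Psi(t) \, d\nu(t)$ be the continuous frame operator; since $A \cdot \Id \le S \le B \cdot \Id$, the function $\widetilde\Psi = S^{-1/2} \Psi$ is a bounded Parseval continuous frame, and a sampling $(t_j)$ yields a discrete frame for $\Psi$ if and only if it does for $\widetilde\Psi$. So assume $\Psi$ itself is Parseval with $\|\Psi(t)\|^2 \le M$ for $\nu$-a.e.\ $t$. Next, use $\sigma$-finiteness to partition $X$ into a countable family of measurable pieces $X_n$ of finite measure, after further subdividing so that the local Bessel bound on each $X_n$ is small compared to $1$. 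The core of the argument is then to sample each $X_n$ at finitely many points $t_{n,1}, \ldots, t_{n,N_n}$ so that $\sum_{i} \Psi(t_{n,i}) \otimes \Psi(t_{n,i})$ is close in operator norm to $c \cdot S_n$, where $S_n = \int_{X_n} \Psi(t) \otimes \Psi(t) \, d\nu(t)$ and $c > 0$ is a fixed common scalar. Summing over $n$ then produces a discrete frame operator within operator-norm error of $c \cdot \Id$, which yields the discrete frame inequalities.

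The tool I intend to use for the local sampling step is a continuous analogue of the Marcus--Spielman--Srivastava solution of Weaver's $\mathrm{KS}_2$ conjecture: given the local smallness ensured by the partition, one can select finitely many points from each $X_n$ so that the discrete outer-product sum is within any prescribed operator-norm tolerance of a scalar multiple of $S_n$. The main obstacle is twofold. First, the Marcus--Spielman--Srivastava machinery is finite-dimensional, whereas $H$ may be infinite-dimensional and each $X_n$ uncountable, so the argument must either pass to well-chosen finite-dimensional sections spanned by a careful test family or invoke a genuinely continuous version of the Weaver partition theorem. Second, because the sampling is assembled from countably many local samplings, the local errors must be budgeted to ensure the aggregated lower frame bound stays strictly positive; a geometric schedule $\varepsilon_n \to 0$ fast enough to be absorbed into a fixed margin should suffice, provided the local approximations can be made as tight as desired. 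The upper frame bound is the easier side, following directly from the local Bessel bounds together with the uniform bound on $\|\Psi(t)\|$.
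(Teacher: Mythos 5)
The easy direction of your argument (from a sampling to a measure $\nu=\sum_j\delta_{t_j}$) and the initial normalization by $S^{-1/2}$ both match the paper. The substantive direction, however, has a genuine gap: the entire selection step is asserted rather than proved. You invoke ``a continuous analogue of the Marcus--Spielman--Srivastava solution of Weaver's $\mathrm{KS}_2$'' to pick finitely many points of $X_n$ whose outer-product sum approximates $c\cdot S_n$ in operator norm, but no such continuous selection theorem is available off the shelf, and you yourself flag the two reasons it is not: $H$ may be infinite-dimensional and each $X_n$ uncountable. Naming the obstacle is not the same as overcoming it. The paper's route supplies exactly the missing content in three stages: (i) Lemma \ref{L:discretize} approximates $\Psi$ by a countably-valued map, converting the integral into a weighted discrete frame $(\sqrt{\mu(X_i)}\,\Psi(t_i))_i$, i.e.\ a scalable frame; (ii) the weights are approximated by rationals and each vector is \emph{replicated} $Da_i^2$ times, so that ``sampling with multiplicity'' becomes ``selecting a subset of a genuine discrete frame,'' which is where the finite-dimensional MSS machinery (iterated in Theorem \ref{T:reduction} to get universal constants $A,B$) actually applies; (iii) $H$ is decomposed into orthogonal finite-dimensional blocks $H_n$ and the cross-block leakage is controlled via Lemmas \ref{L:block}, \ref{L:project} and Corollary \ref{C:BesselPart}. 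None of steps (i)--(iii) appears in your outline, and each is essential.

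A second, quieter problem: you aim for the local discrete sum to be within an \emph{arbitrary} operator-norm tolerance of a fixed scalar multiple $c\cdot S_n$, with errors summed over $n$ against a geometric schedule. Iterated MSS does not deliver near-tight selections; it delivers subsets with universal frame bounds $A\le B$ whose ratio is bounded but not close to $1$ (indeed the paper's final sampled frame has bounds $A(1-\vp)$ and $2B(1+\vp)$, not $c(1\pm\vp)$). To get what you want you would need a Lyapunov-type selection theorem (in the spirit of Akemann--Weaver), which is a strictly stronger tool that you neither cite nor prove. Either adopt that stronger tool explicitly and address its finite-dimensional hypotheses, or abandon the near-tight target and instead control the aggregated lower bound the way the paper does, by confining each block's frame contribution to an orthogonal subspace of $H$ and estimating the leakage terms directly. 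As written, the upper bound is also not ``the easier side following directly'': since each $\Psi(t)$ has components in many blocks, the same leakage estimates are needed there too.
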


Here we mean that a continuous frame $\Psi:X\rightarrow H$ is {\em bounded $\nu$-almost everywhere} if there exists a constant $C>0$ and measurable subset $E\subseteq X$ with $\nu(E)=0$ such that $\|\Psi(t)\|\leq C$ for all $t\in X\setminus E$.

Continuous frames used in applications are typically bounded. However, there do exist examples of unbounded continuous frames which cannot be sampled to obtain a frame.  For example, consider the measure $\mu$ on $\N$ given by $\mu(\{n\})=1/n$.  Then $\Psi:\N\rightarrow\ell_2$ with $\Psi(n)=\sqrt{n} e_n$ is an unbounded continuous frame, and any sampling of $\Psi$ with dense span is unbounded and hence not a frame.  We note as well that our sampling $(\Psi(t_j))_{j\in J}$ in the statement of Theorem \ref{T:D} allows for some points to be sampled multiple times, and indeed there exist bounded continuous frames such that the only way for a sampling to be a frame is for some points to be sampled multiple times.

Theorem \ref{T:D} characterizes when a continuous frame may be sampled to obtain a frame.  The following theorem gives uniform frame bounds for the sampled frame in the case that the continuous frame is Parseval and in the unit ball of the Hilbert space.  We first prove this uniform theorem in Section \ref{S:disc} and then use this to prove Theorem \ref{T:D}.  

\begin{thm}\label{T:UnifSamp}
There exists $C,D>0$ such that if $\Psi:X\rightarrow H$ is a continuous Parseval frame with $\|\Psi(t)\|\leq 1$ for all $t\in X$ then  there exists $(t_j)_{j\in J}\in X^J$ such that $(\Psi(t_j))_{j \in J}$ is a frame of $H$ with lower frame bound $C$ and upper frame bound $D$.
\end{thm}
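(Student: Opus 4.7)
The universality of $C$ and $D$ in Theorem~\ref{T:UnifSamp} strongly suggests proving it via the Marcus--Spielman--Srivastava (MSS) theorem, the resolution of Weaver's $\mathrm{KS}_2$ conjecture. The plan has three steps: reduce the pointwise norm of $\Psi$, discretize the continuous frame into a weighted discrete Parseval-like system, then use MSS to convert weights into an unweighted sampling.

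I would first reduce to the case $\|\Psi(t)\|^2\le\eta$ for an arbitrarily small universal $\eta>0$. Fix $N=\lceil 1/\eta\rceil$, equip $\widetilde{X}=X\times\{1,\dots,N\}$ with the product of $\mu$ and counting measure, and set $\widetilde{\Psi}(t,k)=N^{-1/2}\Psi(t)$. A direct computation shows that $\widetilde{\Psi}$ is still continuous Parseval on $\widetilde{X}$ and $\|\widetilde{\Psi}\|^2\le 1/N\le\eta$. A discrete sampling $(\widetilde{\Psi}(t_j,k_j))_j$ projects via its first coordinate to the sampling $(\Psi(t_j))_j$ of $\Psi$, and frame bounds scale by exactly $N$: universal bounds $C',D'$ for the former become universal bounds $NC',ND'$ for the latter.

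For the discretization, I would use separability of $H$, measurability of $\Psi$, and $\sigma$-finiteness of $\mu$: cover $H$ by countably many small balls, take preimages, and refine into a measurable partition $\{E_i\}$ of $X$ with $\mu(E_i)\le 1$ for every $i$ and $\Psi$ nearly constant on each $E_i$. Picking representatives $t_i\in E_i$, the weighted system $\{\sqrt{\mu(E_i)}\,\Psi(t_i)\}$ has squared norms at most $\eta$, and its weighted frame operator $\sum_i\mu(E_i)\Psi(t_i)\Psi(t_i)^*$ is close in operator norm to $\int\Psi\Psi^*\,d\mu=I$. Now apply MSS to the random vector system $\zeta_i=\xi_i\Psi(t_i)$ with $\xi_i$ independent $\mathrm{Bernoulli}(\mu(E_i))$: the expected operator $\sum_i\mathbb{E}[\zeta_i\zeta_i^*]=\sum_i\mu(E_i)\Psi(t_i)\Psi(t_i)^*$ is close to $I$, and $\mathbb{E}\|\zeta_i\|^2=\mu(E_i)\|\Psi(t_i)\|^2\le\eta$ is uniformly small. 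MSS then produces a deterministic realization of the selectors $\xi_i$ for which the unweighted operator $\sum_i\xi_i\Psi(t_i)\Psi(t_i)^*$ differs from the identity by $O(\sqrt{\eta})$ in operator norm, so choosing $\eta$ small enough yields universal frame bounds on the sampling $(\Psi(t_i))_{i:\xi_i=1}$.

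The main obstacle is this last MSS step, since the classical MSS theorem only gives a one-sided (upper) operator-norm bound on $\sum_i\zeta_i\zeta_i^*$, whereas we require a two-sided bound. The standard fix is to apply MSS in its ``paving'' form with $r=2$ to the weighted Parseval system $\{\sqrt{\mu(E_i)}\,\Psi(t_i)\}$, producing a partition of the index set into two halves whose weighted frame operators each lie in $[(1/2-O(\sqrt{\eta}))I,(1/2+O(\sqrt{\eta}))I]$; one then splits indices according to whether $\mu(E_i)$ is close to $1$ or small, including large-weight points with multiplicity one (which automatically sandwiches the unweighted sum by the weighted one) and sub-sampling the small-weight points with a second MSS application tailored to the small-weight part. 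A further technicality is that MSS is originally finite-dimensional, so a reduction to $H$ via finite-rank projections and a compactness/diagonal argument over a countable exhaustion of the index set is needed. This weighted-to-unweighted conversion is where the real work of the theorem lies.
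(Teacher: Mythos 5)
Your high-level strategy (discretize, then use MSS to pass from a weighted Parseval system to an unweighted sampling) is the right one and matches the paper's, but the proposal has genuine gaps, two of which are fatal as written. First, your selection mechanism --- Bernoulli selectors $\xi_i$ with parameter $\mu(E_i)$ --- picks each sample point at most once, and your claim that the partition can always be refined so that $\mu(E_i)\le 1$ fails when $\mu$ has atoms of large measure. Multiplicities are not a convenience here but a necessity: if $X$ is a single atom of measure $\varepsilon^{-2}$ and $\Psi\equiv \varepsilon v$ for a unit vector $v$ spanning $H$, then $\Psi$ is a bounded continuous Parseval frame whose only multiplicity-free samplings have frame bound $\varepsilon^{2}$, so no universal $C$ exists for subset-type samplings. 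The paper's mechanism is different: approximate the weights by rationals with common denominator, replace each $x_j$ by $Da_j^2$ copies of $x_j$, and apply the \emph{partition} theorem (Theorem \ref{T:reduction1}, itself built from Theorem \ref{MSS} with $r=2$ together with Theorem \ref{46} to recover lower bounds, iterated until the frame bound falls into a fixed window) to the duplicated system; the multiplicity of $x_j$ in a cell of the resulting partition is the integer quantization of its weight. Your proposed fix for the one-sidedness of MSS --- splitting by weight size and ``sub-sampling the small-weight points with a second MSS application'' --- does not resolve anything, because the small-weight part is exactly the general weighted-to-unweighted problem again and a second one-sided application still produces no lower bound.

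Second, the infinite-dimensional assembly is not a routine compactness/diagonal argument. That device works for the partition problem (Corollary \ref{C:InfPart}), but not for sampling: the finite-dimensional selections require completing the restricted system to a nearly tight frame on each finite-dimensional window with auxiliary vectors that are not samples of $\Psi$ (Lemma \ref{L:orthogonal}), and the number of cells in each local partition depends on the denominators of the local weights, so there is no uniform bound enabling a pinball/liminf argument. The paper instead builds an explicit decomposition of $H$ into orthogonal finite-dimensional blocks with overlapping windows, uses Lemma \ref{L:block} to locate cut points where a given $x$ has small projection, Corollary \ref{C:BesselPart} to choose for each window a single cell of the partition whose leakage into all earlier subspaces is simultaneously summably small, and Lemma \ref{L:project} to control the cross terms. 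This leakage control is where most of the work in Theorem \ref{T:samp} lies and is entirely absent from your sketch. (Your preliminary reduction to $\|\Psi(t)\|^2\le\eta$ is harmless but unnecessary; the smallness needed to invoke Theorem \ref{MSS} is obtained for free by rescaling by the inverse square root of the running frame operator, whose norm stays at least $79$ throughout the iteration.)
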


So far we have stated results  solely in terms of sampling continuous frames.  However, as every frame can be realized as a continuous frame on $\N$, we may consider what the discretization problem implies in this case.  Suppose that $(x_n)_{n\in\N}$ is a tight frame of a Hilbert space $H$ with frame bound $K>1$ satisfying $\|x_n\|\leq 1$ for all $n\in\N$.  If we define a measure $\mu$ on $\N$ with $\mu(n)=1/K$ for all $n\in\N$, then $\Psi:\N\rightarrow H$ with $\Psi(n)=x_n$ is a continuous Parseval frame.  Now, sampling $\Psi$ corresponds with taking a subset of $(x_n)_{n\in\N}$.   This gives the following corollary, which was proven for finite frames by Nitzan, Olevskii, and Ulanovskii \cite{NOU}.

\begin{cor}\label{C:frameSubset}
There exists uniform constants $E,F>0$ such that if $(x_i)_{i\in I}$ is a tight frame of vectors in the unit ball of a Hilbert space $H$ with frame bound greater than $1$ then there exists a subset $J\subseteq I$ such that $(x_i)_{i\in J}$ is a frame with lower frame bound $E$ and upper frame bound $F$.
\end{cor}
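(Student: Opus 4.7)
The plan is to apply Theorem~\ref{T:UnifSamp} to the continuous frame representation of $(x_i)_{i \in I}$ constructed in the paragraph preceding the corollary. Given a tight frame $(x_i)_{i \in I}$ in the unit ball with frame bound $K > 1$, equip $I$ (as a discrete measurable space with every singleton measurable) with the measure $\mu(\{i\}) = 1/K$ and define $\Psi(i) = x_i$. A direct computation gives
\[
\int_I |\langle x, \Psi(i)\rangle|^2 \, d\mu(i) = \frac{1}{K}\sum_{i \in I} |\langle x, x_i\rangle|^2 = \|x\|^2,
\]
so $\Psi$ is a continuous Parseval frame with $\|\Psi(i)\| = \|x_i\| \leq 1$ for all $i \in I$.

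Theorem~\ref{T:UnifSamp} then supplies universal constants $C, D > 0$ and a sampling $(t_\ell)_{\ell \in L} \in I^L$ such that $(x_{t_\ell})_{\ell \in L}$ is a frame of $H$ with lower frame bound $C$ and upper frame bound $D$. Setting $J := \{t_\ell : \ell \in L\} \subseteq I$ to be the image of the sampling yields the candidate subset $(x_i)_{i \in J}$, with target frame bounds $E$ and $F = D$.

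The main obstacle is ensuring that the passage from the indexed sampling $(x_{t_\ell})_{\ell \in L}$ to the subset $(x_i)_{i \in J}$ preserves the frame bounds, since in general a sampling may repeat points. The upper bound $F = D$ transfers verbatim because removing repetitions only decreases the frame sum. For the lower bound, observe that each multiplicity $m_i := |\{\ell : t_\ell = i\}|$ satisfies $m_i \|x_i\|^2 \leq D$ (obtained by applying the upper frame bound of the sampling to $x = x_i/\|x_i\|$). Exploiting the atomic structure of $\mu$ --- every singleton carries equal positive mass $1/K$ --- one expects the construction in the proof of Theorem~\ref{T:UnifSamp} to produce multiplicities bounded by a universal constant $M$, giving
\[
\sum_{i \in J} |\langle x, x_i\rangle|^2 \;\geq\; \frac{1}{M}\sum_{\ell \in L} |\langle x, x_{t_\ell}\rangle|^2 \;\geq\; \frac{C}{M}\|x\|^2,
\]
and hence $E = C/M$. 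The authors' comment that ``sampling $\Psi$ corresponds with taking a subset of $(x_n)_{n\in\N}$'' in this atomic setting indeed suggests $M$ can be taken equal to $1$, producing $E = C$ directly.
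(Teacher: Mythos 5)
Your setup is exactly the reduction the paper sketches in the paragraph before the corollary: put mass $1/K$ on each index, observe that $\Psi(i)=x_i$ is a continuous Parseval frame of norm-at-most-one vectors, and invoke Theorem \ref{T:UnifSamp}. You are also right to isolate the one step this sketch glosses over, namely that Theorem \ref{T:UnifSamp} produces a \emph{sampling} $(t_\ell)_{\ell\in L}\in I^L$, which may repeat indices, whereas the corollary asserts the existence of a \emph{subset} $J\subseteq I$. Unfortunately, your resolution of that point is where the argument breaks down. The estimate $m_i\|x_i\|^2\le D$ only gives $m_i\le D/\|x_i\|^2$, which is not a universal bound: the tight frame may contain vectors of very small norm, sampled with correspondingly large multiplicity, and for those vectors deduplication can cost an unbounded factor in the lower frame bound. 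Nor does the construction inside Theorem \ref{T:samp} give $M=1$: there each $x_j$ is replaced by $Da_j^2$ copies with $D$ a multiple of the denominators of the $a_j^2$, so when $K$ is not an integer the selected cell may retain several copies of the same vector; and the paper explicitly warns, just after Theorem \ref{T:D}, that there are bounded continuous frames for which \emph{every} frame-producing sampling must repeat points. So the authors' phrase ``sampling corresponds with taking a subset'' cannot be taken literally, and your $E=C/M$ with a universal $M$ is not established.

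The paper closes this gap by a different route, which it announces as ``a direct proof of a generalization of Corollary \ref{C:frameSubset}'': Corollary \ref{C:InfPart}. There the tight frame itself (not a blown-up copy of it) is \emph{partitioned} into at most $\lfloor K/A\rfloor$ genuine subsets, each a frame with the uniform bounds $A$ and $B$ of Theorem \ref{T:reduction1}; this is done by applying the finite-dimensional partition theorem to the truncations $(f_j)_{j\le n}$ completed to $K$-tight frames of $H_n=\spa_{1\le j\le n}f_j$ and then stabilizing the partitions with the pinball principle. Any nonempty cell of that partition is the subset $J$ you need, with $E=A$ and $F=B$, and no multiplicity issue ever arises. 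To make your write-up correct, route the argument through Corollary \ref{C:InfPart} (equivalently, Theorem \ref{T:reduction1} plus the limiting argument) rather than through Theorem \ref{T:UnifSamp}.
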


For applications, this means that if you are working with a frame which is more redundant than necessary then it is possible to a take a subset which is less redundant and is still a good frame.  In Section \ref{S:part} we will give a direct proof of a generalization of Corollary \ref{C:frameSubset} which we state as Corollary \ref{C:InfPart} later in the introduction.
To realize a more general application of this idea, we call a collection of vectors $(x_i)_{i \in I}$ in a Hilbert space $H$  a scalable frame  if there exists scalars $(c_i)_{i\in I}$ such that $(c_ix_i)_{i\in I}$ is a Parseval frame for $H$ \cite{KOPT}.   We can use this to define a measure $\mu$ on $I$ by $\mu(i)=|c_i|^2$ for all $i\in I$.  Then,  $\Psi: I\rightarrow H$ with $\Psi(i)=x_i$ is a continuous Parseval frame and applying Theorem \ref{T:UnifSamp} gives the following corollary.

\begin{cor}\label{C:frameScalable}
There exists uniform constants $E,F>0$ such that if $(x_i)_{i\in I}$ is a scalable frame of unit vectors then there exists a subset $J\subseteq I$ such that $(x_i)_{i\in J}$ is a frame with lower frame bound $E$ and upper frame bound $F$.
\end{cor}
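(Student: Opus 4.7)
The plan is to reduce this to a direct application of Theorem \ref{T:UnifSamp}, with one short post-processing step to pass from a (possibly repeating) sampling to an honest subset of the index set.

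First, I would transfer the scalable frame into the continuous-frame setting exactly as suggested in the paragraph before the statement. Given scalars $(c_i)_{i\in I}$ making $(c_i x_i)_{i\in I}$ Parseval, equip $I$ with the full power-set $\sigma$-algebra and the atomic measure $\mu(\{i\})=|c_i|^2$. A standard argument (test the Parseval identity against a countable dense subset of $H$) shows $\{i:c_i\neq 0\}$ is countable, so $\mu$ is $\sigma$-finite. The map $\Psi(i)=x_i$ is measurable with $\|\Psi(i)\|=1$ for all $i$, and
\[\int_I|\langle x,\Psi(i)\rangle|^2\,d\mu(i)=\sum_i|c_i|^2|\langle x,x_i\rangle|^2=\sum_i|\langle x,c_ix_i\rangle|^2=\|x\|^2,\]
so $\Psi$ is a continuous Parseval frame satisfying the hypotheses of Theorem \ref{T:UnifSamp}. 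Applying that theorem produces universal constants $C,D>0$ and a family $(t_j)_{j\in J'}\in I^{J'}$ such that $(x_{t_j})_{j\in J'}$ is a frame for $H$ with bounds $C$ and $D$.

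The remaining step is to convert this sampling into an honest subset: let $J:=\{t_j:j\in J'\}\subseteq I$ and, for $i\in J$, set $k_i:=|\{j\in J':t_j=i\}|$. The key observation is that, because every $x_i$ is a unit vector, testing the upper frame bound for the sampling against $x_i$ itself yields
\[k_i=k_i|\langle x_i,x_i\rangle|^2\le \sum_{j\in J'}|\langle x_i,x_{t_j}\rangle|^2\le D,\]
so every multiplicity is bounded uniformly by $D$. Consequently, for every $x\in H$,
\[\frac{C}{D}\|x\|^2\le \frac{1}{D}\sum_{j\in J'}|\langle x,x_{t_j}\rangle|^2\le \sum_{i\in J}|\langle x,x_i\rangle|^2\le \sum_{j\in J'}|\langle x,x_{t_j}\rangle|^2\le D\|x\|^2,\]
giving universal frame bounds $E=C/D$ and $F=D$. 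I do not anticipate a serious obstacle: once Theorem \ref{T:UnifSamp} is in hand the whole argument is bookkeeping, and the only nontrivial point---controlling the multiplicity of the sampling so that a \emph{subset}, rather than a multi-set, suffices---is handled automatically by the unit-norm hypothesis on the $x_i$.
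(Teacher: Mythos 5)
Your proposal is correct and follows the same route as the paper: the paper proves this corollary exactly by equipping $I$ with the atomic measure $\mu(\{i\})=|c_i|^2$, observing that $\Psi(i)=x_i$ is then a continuous Parseval frame of unit vectors, and invoking Theorem \ref{T:UnifSamp}. The one place you go beyond the paper is the multiplicity step: Theorem \ref{T:UnifSamp} only yields a sampling $(t_j)_{j\in J'}\in I^{J'}$ that may repeat indices, while the corollary asserts an honest subset $J\subseteq I$, and the paper passes over this silently. Your observation that the unit-norm hypothesis forces each multiplicity to satisfy $k_i=k_i|\langle x_i,x_i\rangle|^2\le D$, so that deleting duplicates costs at most a factor of $D$ in the lower bound (giving $E=C/D$, $F=D$), closes that small gap cleanly and is a worthwhile addition.
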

We give further discussion of scalable frames in Section \ref{S:app}, where we prove a new quantization theorem for scalable frames.

Our approach to proving Theorem \ref{T:D} in Section \ref{S:disc} is based on reducing the problem of sampling continuous frames into a different problem of partitioning discrete frames, which is solved using the recent results of Marcus,  Spielman, and Srivastava in their solution of the Kadison-Singer problem \cite{MSS}.
  In \cite{BCEK}, Balan, Casazza, Edidin, and Kutyniok introduce problems of determining when a tight frame for a Hilbert space may be partitioned into subsets each of which are frames for the Hilbert space and what are the optimal such decompositions.
The notion of an optimal decomposition depends on the application, but one natural way to consider a partition of a tight frame into subsets as optimal is if each of the lower frame bounds of the subsets are as big as possible and each of the upper frame bounds of the subsets are as small as possible.   In that respect, the following theorem, which is essentially Lemma 2 in Nitzan, Olevskii, and Ulanovskii's paper \cite{NOU}, proves that there exists uniform upper and lower frame bounds for the optimal decompositions of tight frames.    They used this result to then prove that for every subset $S\subset \R$ of finite Lebesgue measure, there exists a discrete set $\Lambda\subset\R$ such that the exponentials $(e^{i\lambda t})_{\lambda\in\Lambda}$ form a frame of $L_2(S)$.  
We include a proof of Theorem \ref{T:reduction1} in Section \ref{S:part} for completeness as we show that it is essentially the uniform discretization problem for continuous frames with finite support.

\begin{thm}\label{T:reduction1}
There exists uniform constants $A,B>0$ such that every tight frame of vectors in the unit ball of a finite dimensional Hilbert space $H$ with frame bound greater than $1$ can be partitioned into a collection of frames of $H$ each with lower frame bound $A$ and upper frame bound $B$.  Furthermore, there exists sequences of constants $(A_n)_{n\in\N},(B_n)_{n\in\N}$ with $\lim_{n\rightarrow\infty}A_n=\lim_{n\rightarrow\infty}B_n=1$ such that  for every $N\in\N$ and every tight frame of vectors in $N^{-1}B_H$ with frame bound greater than $1$ has a subset with lower frame bound $A_N$ and upper frame bound $B_N$. 
\end{thm}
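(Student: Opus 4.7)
The plan is to iteratively apply the Marcus--Spielman--Srivastava (MSS) solution of the Kadison--Singer / Weaver $\mathrm{KS}_r$ conjecture. MSS asserts: for any Parseval frame $(u_i)$ with $\|u_i\|^2 \leq \delta$ and any $r \geq 2$, there is a partition $\{S_1,\ldots,S_r\}$ of the index set with $\|\sum_{i\in S_j}u_iu_i^*\| \leq (1/\sqrt{r}+\sqrt{\delta})^2$ for every $j$.

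Given a tight frame $(x_i)_{i\in I}$ in $B_H$ with $\sum x_ix_i^* = K\cdot\Id_H$ and $K>1$, set $u_i = x_i/\sqrt{K}$; this is a Parseval frame with $\|u_i\|^2 \leq 1/K$. If $K\leq K_0$ for a fixed threshold $K_0$, the trivial partition $\{I\}$ already has bounds in $[1,K_0]$. For $K>K_0$, applying MSS with $r\asymp K^{1/3}$ (with an appropriate universal safety constant in front) yields a partition of $I$ into pieces whose frame operators have upper norm of order $K^{2/3}$. The identity $\sum_{j=1}^r\sum_{i\in S_j}x_ix_i^* = K\cdot\Id_H$ combined with the MSS upper bound furnishes matching lower bounds via the complementary decomposition $\sum_{i\in S_j}x_ix_i^* = K\cdot\Id_H - \sum_{k\neq j}\sum_{i\in S_k}x_ix_i^*$, so each subframe has frame bounds in an interval of the form $[cK^{2/3},CK^{2/3}]$ for universal $c,C>0$; the vectors remain in $B_H$.

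One then recurses on each subframe. Since these are no longer tight but only nearly tight (bounded condition number), one applies MSS in the following extended form: given a frame $(v_i)$ with $\sum v_iv_i^* \preceq \|T\|\cdot\Id_H$ and $\|v_i\|^2 \leq \delta$, augment the collection by auxiliary vectors of norm at most $\sqrt{\delta}$ obtained by splitting each eigendirection of $\|T\|\cdot\Id_H-\sum v_iv_i^*$ into $\lceil\lambda/\delta\rceil$ equal-magnitude contributions; this produces a Parseval frame (after rescaling) to which MSS applies, and restricting the output partition back to the original indices yields the analogous bound for the original subframe. At each level the frame bound shrinks from $K_n$ to $O(K_n^{2/3})$, so $O(\log\log K)$ levels suffice to drive the bound below $K_0$; choosing the branching factor and safety constant at each level to balance the MSS error against the lower-bound loss keeps the condition number of each subframe bounded throughout, yielding the final partition with bounds in a universal interval $[A,B]$.

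For the refined assertion, the hypothesis $\|x_i\|\leq 1/N$ gives $\delta = 1/(KN^2)$, so the MSS error $(1/\sqrt{r}+\sqrt{\delta})^2-1/r$ shrinks by a factor of $N$. Running the same argument and selecting any single subset from the resulting partition produces a subframe with bounds in an interval of width $O(1/N)$ about $1$ after normalization, supplying $A_N,B_N\to 1$. The main obstacle is the coordinated choice of branching factor and safety constants across the recursion so that the condition number of the subframes does not degenerate over the $O(\log\log K)$ levels of iteration; this is the careful parameter optimization underlying the argument of Nitzan--Olevskii--Ulanovskii.
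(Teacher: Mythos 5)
Your overall strategy---iterate MSS and recover lower bounds from the complementary decomposition of a tight frame---is the right one, and it is essentially what the paper does, but two of the steps you defer to ``parameter balancing'' are exactly where the proof lives, and as described they do not go through. The first gap is the recursion on non-tight subframes. Your completion trick controls only the \emph{upper} bound of a restricted piece. For the lower bound you must again subtract the other pieces from the frame operator $T$ of the completed tight frame, and the auxiliary vectors landing in a given piece can carry frame operator as large as $\|T\|-A$ in total; so the lower bound of the restricted piece is only $\gtrsim \|T\|/r - (\|T\|-A)$, which is negative as soon as the condition-number deficit $1-A/\|T\|$ exceeds $1/r$. Since you take $r\asymp K_n^{1/3}\to\infty$ while the deficit after even one level is a fixed constant (of order $c^{3/2}$ in your notation), the lower bound is destroyed at the second level for large $K$. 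The paper avoids this entirely by applying $T_m^{-1/2}$ to renormalize each chosen piece back to a \emph{Parseval} frame before partitioning again (Lemma \ref{L:operator} tracks what this does to the bounds), so the complement argument (Theorem \ref{46}) is always applied to a genuine Parseval frame and no completion is needed. Relatedly, with $r\asymp K^{1/3}$ the per-level multiplicative loss in the ratio of bounds is a constant bounded away from $1$, so the condition number is not controlled over the $O(\log\log K)$ levels; the paper uses $r=2$ throughout, making the per-level relative loss $O(B_m^{-1/2})$ with $B_m$ decaying geometrically, whence $\ln(B/A)\le\sum 10B_m^{-1/2}<\infty$. Your scheme could perhaps be repaired with renormalization and level-dependent constants $c_n$ with $\sum c_n^{3/2}<\infty$, but that analysis is absent.

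The second gap is the ``furthermore'' clause. A single piece of the final partition has frame bounds near the terminal frame bound (an absolute constant in $[A,B]$, e.g.\ in $[79,200]$ in the paper's normalization), not near $1$, and no normalization is permitted in the statement. The small-norm hypothesis $\|x_i\|\le N^{-1}$ buys you a partition into pieces each with upper bound at most $b_\vp N^{-2}<N^{-1}$ and ratio of bounds at most $1+\vp$; to get a \emph{subset} with bounds tending to $1$ you must take a union of such pieces whose upper bound is the smallest one exceeding $1$ (so it is at most $1+N^{-1}$ by the granularity bound), and then use the ratio control for the lower bound. Selecting ``any single subset from the resulting partition'' does not produce bounds near $1$.
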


At first glance, it may seem obvious that a highly redundant frame is the union of less redundant frames, but in a high dimensional Hilbert space it can be very tricky to determine how to partition the frame vectors so that each set in the resulting partition has a uniformly high lower frame bound and uniformly low upper frame bound.  Furthermore, we prove that the corresponding question for bases is false.  That is, there does not exist a uniform constant $A$ such that every finite unit norm tight frame has a subset which is a basis and has lower Riesz bound $A$.  Using this finite dimensional result, we prove the following generalization at the end of Section \ref{S:part}.

\begin{cor}\label{C:InfPart}
Let $A,B>0$ be the constants given in Theorem \ref{T:reduction1}. Then every tight frame of vectors in the unit ball of a separable Hilbert space $H$ with frame bound greater than $1$ can be partitioned into a finite collection of frames of $H$ each with lower frame bound $A$ and upper frame bound $B$. 
\end{cor}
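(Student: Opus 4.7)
I plan to reduce the claim to the finite-dimensional Theorem~\ref{T:reduction1} by an exhaustion of $H$ by finite-dimensional subspaces, combined with a Tychonoff compactness argument to extract a limiting partition.

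Let $(x_i)_{i \in I}$ be a tight frame of $H$ with bound $K>1$ and $\|x_i\|\leq 1$ for all $i$. Choose an increasing chain of finite-dimensional subspaces $H_1 \subset H_2 \subset \cdots$ with $\bigcup_n H_n$ dense in $H$, and let $P_n: H \to H_n$ denote the orthogonal projections. For each $n$, $(P_n x_i)_{i\in I}$ is a tight frame of $H_n$ with bound $K$ and vectors in the closed unit ball of $H_n$, because $\langle x, P_n x_i\rangle = \langle P_n x, x_i\rangle = \langle x, x_i\rangle$ for every $x\in H_n$.

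Apply Theorem~\ref{T:reduction1} in $H_n$ to obtain a partition $I = I_1^{(n)} \sqcup \cdots \sqcup I_{m_n}^{(n)}$ such that each $(P_n x_i)_{i \in I_k^{(n)}}$ is a frame of $H_n$ with lower frame bound $A$ and upper frame bound $B$. Pairing the lower bounds against any nonzero $x \in H_n$ forces $m_n A \leq K$, so $m_n \leq m := \lceil K/A\rceil$; pad each partition with empty blocks so that $m_n = m$, and encode it as $f_n: I \to \{1,\dots,m\}$ with $f_n(i) = k$ iff $i\in I_k^{(n)}$. By Tychonoff, $\{1,\dots,m\}^I$ is compact in the product topology, so $(f_n)$ admits a subnet $(f_{n_\alpha})$ converging pointwise to some $f: I \to \{1,\dots,m\}$. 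Set $I_k := f^{-1}(k)$; since the codomain is discrete, $f_{n_\alpha}(i) = f(i)$ eventually for each $i$.

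It remains to show that each nonempty $(x_i)_{i \in I_k}$ is a frame of $H$ with bounds $A$ and $B$. Fix $k$ with $I_k\neq\emptyset$ and $x \in \bigcup_N H_N$, say $x\in H_N$. Choosing any $i_0 \in I_k$, we have $i_0 \in I_k^{(n_\alpha)}$ eventually, so the block $I_k^{(n_\alpha)}$ is nonempty and the finite-dimensional bounds $A\|x\|^2 \leq \sum_{i\in I_k^{(n_\alpha)}} |\langle x, x_i\rangle|^2 \leq B\|x\|^2$ hold once $H_{n_\alpha}\supseteq H_N$. The total mass $\sum_i |\langle x, x_i\rangle|^2 = K\|x\|^2$ is finite, so for $\varepsilon>0$ one may fix a finite $F\subset I$ with $\sum_{i\notin F} |\langle x, x_i\rangle|^2 < \varepsilon$; since $f_{n_\alpha}|_F = f|_F$ eventually, this yields $\sum_{i \in I_k^{(n_\alpha)}} |\langle x, x_i\rangle|^2 \to \sum_{i \in I_k} |\langle x, x_i\rangle|^2$ along the subnet, so $A\|x\|^2 \leq \sum_{i \in I_k} |\langle x, x_i\rangle|^2 \leq B\|x\|^2$ on the dense subspace $\bigcup_N H_N$. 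Both sides are continuous in $x$ (the analysis operator of $(x_i)_{i\in I_k}$ has norm at most $\sqrt K$ as a restriction of the analysis operator of the full tight frame), so the bounds extend to all of $H$. The nonempty $I_k$ then form the desired finite partition, and the main (routine) technical step is the limit exchange along the subnet, handled by separating the finite head where pointwise convergence takes effect from the tail uniformly controlled by the Bessel bound $K$.
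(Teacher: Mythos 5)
Your argument is correct and reaches the conclusion by a route that is close in spirit to the paper's but different in its details. Both proofs approximate by finite-dimensional subspaces, apply Theorem \ref{T:reduction1} there, bound the number of blocks by $K/A$, and extract a limiting partition by a compactness argument (the paper's ``pinball principle'' is exactly your Tychonoff step, carried out with a diagonal subsequence rather than a subnet). The difference lies in how the finite-dimensional tight frames are manufactured. The paper works in $H_n=\mathrm{span}(f_1,\dots,f_n)$ and \emph{completes} the finite initial segment $(f_j)_{j\le n}$ to a finite $K$-tight frame of $H_n$ by adjoining auxiliary vectors $(g_{j,n})$; it then must show in the limit that the auxiliary vectors contribute nothing, i.e.\ $\sum_{j}|\langle g_{j,k_n},x\rangle|^2\to 0$, so that the lower bound $A$ survives on the $f_j$'s alone. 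You instead \emph{project} the entire frame onto $H_n$, which produces a $K$-tight frame of $H_n$ immediately and with no extraneous vectors, so your limit computation reduces to a single head/tail split controlled by the Bessel bound $K$. Your handling of the nonemptiness of the limiting blocks (via the eventual membership of a fixed $i_0$) is also sound.

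One point deserves flagging. By projecting the whole infinite frame you apply Theorem \ref{T:reduction1} to an \emph{infinite} tight frame in a finite-dimensional space. The statement of that theorem permits this, but its proof rests on the Marcus--Spielman--Srivastava corollary for \emph{finite} sequences, and the paper's own proof of Corollary \ref{C:InfPart} is arranged precisely so that Theorem \ref{T:reduction1} is only ever invoked on finite frames. This is not a fatal defect---the infinite, finite-dimensional case follows from the finite case by the same kind of compactness argument you already use, or one can truncate to the finitely many $i$ with $\|P_nx_i\|$ above a threshold and absorb the rest into an error term---but as written you are leaning on a case of the theorem that the paper does not explicitly establish. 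Either supply that reduction or adopt the paper's completion trick for the finite-dimensional step.
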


The proof of Theorem \ref{T:reduction1} relies on the recent solution of the Kadison-Singer problem by Marcus, Spielman, and Srivastava \cite{MSS}.  The Kadison-Singer problem \cite{KS} was known to be equivalent to many open problems such as the Feichtinger conjecture \cite{CCLV}, the paving conjecture \cite{A}, Weaver's conjecture \cite{W}, and the Bourgain-Tzafriri conjecture \cite{BT}.  Each of these problems can be thought of in some ways as determining when a set with some property can be uniformly partitioned into sets with a desired property.
Naturally, the frame partition problem falls into this category as well, and it was noted in \cite{BCEK} that the problem of partitioning a large frame into smaller frames is related to these famous problems.  In \cite{MSS}, the authors directly prove Weaver's conjecture and hence prove all the equivalent problems as well.  We note that the proof of Theorem \ref{T:reduction1} doesn't actually use Weaver's conjecture or any of its equivalent formulations, but instead uses the stronger result proved in \cite{MSS}.  The main reason for this is that Weaver's conjecture concerns partitioning in a way that reduces an upper bound, but we need to reduce an upper bound while maintaining a relatively  close lower bound.  The Marcus, Spielman, Srivastava result allows partitioning in a way that divides both the upper and lower frame bound almost perfectly in half.

We recommend the textbook \cite{C} for a reference on frames and continuous frames from a mathematical perspective, and we recommend the textbook \cite{AAG2} for a reference on frames and continuous frames from a physics perspective.   We include applications of the discretization and partitioning theorems in Section \ref{S:app}. We prove the frame partitioning theorem in Section \ref{S:part}. We include lemmas about discrete frames in Section \ref{S:lem}.  We prove the discretization theorem in Section \ref{S:disc}.

\section{Applications}\label{S:app}
We include here some applications of the discretization and partition theorems, two of which are new theorems and one is a quick proof of a known theorem.

\subsection{Scalable Frames}\label{S:SF}

A collection of unit vectors $(x_i)_{i \in I}$ in $H$ is said to be a scalable frame  if there exist scalars $(c_i)_{i\in I}$ such that $(c_ix_i)_{i\in I}$ is a Parseval frame for $H$ \cite{KOPT}. 
As one of the steps toward solving the discretization problem, we prove  in Theorem \ref{T:samp} that there are universal constants $A$ and $B$ such that if $(x_i)_{i\in I}$ is a scalable frame, then $(x_i)_{i\in I}$ can be sampled to form a frame with lower frame bound $A$ and upper frame bound $B$.  One way to think about this is in terms of quantization.  A scalable frame can be scaled to be Parseval, but suppose that we are restricted to using only integer coefficients to scale the frame.  Being able to sample $(x_i)_{i \in I}$ to get a frame is equivalent to being able to obtain a frame by scaling using only integer scalars.  As quantization of frame coefficients is an important aspect of frame theory \cite{BPY}, it is natural to consider scaling frames using quantized scalars.  The following result gives essentially that how well we can scale a frame using quantized coefficients may be determined using only how fine a quantization we allow.  In particular, this is independent of both the dimension of the space and the number of frame vectors.

\begin{thm}\label{T:QS}
Let $(A_n)_{n\in\N}$ and $(B_n)_{n\in\N}$ with $\lim A_n=\lim B_n=1$ be the scalars given in Theorem \ref{T:reduction1}.  Let $N\in\N$.  If $(x_i)_{i \in I}$ is a scalable frame in a finite dimensional Hilbert space $H$
then there exists scalars $(c_i)_{i\in I}\subseteq\{ \sqrt{m}/N:m\in\Z\}$ such that $(c_i x_i)_{i\in I}$ is a frame with lower frame bound $A_N$ and upper frame bound $B_N$.  
\end{thm}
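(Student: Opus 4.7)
The strategy will be to reduce Theorem \ref{T:QS} to the second part of Theorem \ref{T:reduction1}. The key observation is that choosing $c_i = \sqrt{m_i}/N$ with $m_i$ a non-negative integer is equivalent to assigning integer multiplicities $m_i$ to the vectors in the family $\{x_i/N : i \in I\}$, since $\|c_i x_i\|^2 = m_i / N^2$ coincides with the contribution of $m_i$ copies of $x_i/N$ to the frame operator. Every vector $x_i/N$ has norm exactly $1/N$ and therefore lies in $N^{-1}B_H$. Producing the desired quantized scaling thus amounts to exhibiting a multi-subset (with integer multiplicities) of $\{x_i/N : i \in I\}$ that is a frame with bounds $A_N$ and $B_N$.

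Let $(d_i)_{i \in I}$ be scalars with $(d_i x_i)_{i \in I}$ Parseval, so that $\sum_i d_i^2 x_i x_i^* = \Id$ and each $d_i \in [0,1]$. The plan is to first construct integer multiplicities $M_i \geq 0$ so that the multiset consisting of $M_i$ copies of $x_i/N$ forms a tight frame in $N^{-1}B_H$ with frame bound strictly greater than $1$, and then feed this frame into the second part of Theorem \ref{T:reduction1} to extract a sub-multi-subset. The natural candidate is $M_i$ close to $K N^2 d_i^2$ for an integer $K \geq 2$, since then
\[
\sum_{i} M_i (x_i/N)(x_i/N)^* = \frac{1}{N^2} \sum_i M_i \, x_i x_i^* \;\approx\; K \sum_i d_i^2 x_i x_i^* = K\, \Id .
\]
To control the operator-norm error in this approximation I would use the identity $\sum d_i^2 x_i x_i^* = \Id$ together with the elementary fact that, for any reals $\alpha_i$, the operator $\sum_i \alpha_i(d_i^2 x_i x_i^*)$ has norm at most $\max_i |\alpha_i|$ (since $\sum_i (\alpha_i + \max|\alpha_j|)(d_i^2 x_i x_i^*) \geq 0$). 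Writing $M_i/N^2 - K d_i^2 = \alpha_i\, d_i^2$ and bounding each $|\alpha_i|$ from the rounding rule separately for the ``large'' indices ($K N^2 d_i^2 \geq 1$) and the ``small'' ones ($K N^2 d_i^2 < 1$, where we set $M_i = 0$) translates the pointwise rounding into an operator-norm control on the deviation of the frame operator from $K \, \Id$.

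Once a tight frame of $M_i$-copies of $x_i/N$ in $N^{-1}B_H$ with bound strictly above $1$ is in hand, I would invoke the second part of Theorem \ref{T:reduction1} to obtain a sub-multi-subset that is a frame with lower bound $A_N$ and upper bound $B_N$. The resulting multiplicities $m_i \leq M_i$ then define $c_i = \sqrt{m_i}/N \in \{\sqrt{m}/N : m \in \Z\}$, and $(c_i x_i)_{i \in I}$ is a frame with the required bounds.

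The main obstacle is the integrality step: for a general scalable frame, the Parseval scalars $d_i^2$ need not be commensurate with $1/(KN^2)$, so naive rounding produces only an \emph{approximately} tight frame. I expect to resolve this either by exploiting the freedom in the Parseval decomposition (choosing the $M_i$'s so that $\sum M_i x_i x_i^*$ is exactly a scalar multiple of $\Id$, which is possible when the $d_i^2$ are commensurate and approximable in general), or, more robustly, by applying the Marcus--Spielman--Srivastava machinery underlying Theorem \ref{T:reduction1} directly to the approximately tight frame; the operator-norm estimate above is designed precisely to guarantee that such a renormalization does not degrade the limiting behavior $A_N, B_N \to 1$.
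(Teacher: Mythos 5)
Your reduction --- interpreting $c_i=\sqrt{m_i}/N$ as assigning integer multiplicities to the vectors $x_i/N\in N^{-1}B_H$ and then invoking the ``furthermore'' clause of Theorem \ref{T:reduction1} --- is exactly the reduction used in the paper. The gap is the step you yourself flag: producing an honest \emph{tight} frame of copies of $x_i/N$ from the Parseval scalars $(d_i)$. Rounding $M_i\approx KN^2d_i^2$ does not work. For indices with $KN^2d_i^2<1$ you must set $M_i=0$, which in your notation forces $\alpha_i=-K$; since the vectors with small $d_i$ may carry essentially all of the weight of $\Id$ in some direction (e.g.\ many copies of a fixed unit vector, each with a tiny scalar $d_i$), the error operator $\sum_i\alpha_i\,d_i^2x_ix_i^*$ can have norm as large as $K$, i.e.\ the rounded system need not even be a frame. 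Even on the ``large'' indices the relative error satisfies only $|\alpha_i|/K\le 1/(2KN^2d_i^2)$, which is of order one near the threshold, so no choice of $K$ makes the rounded frame operator uniformly close to $K\,\Id$ in the sense needed to apply Theorem \ref{T:reduction1}. Neither of your two proposed repairs is carried out, and the second one (running the Marcus--Spielman--Srivastava machinery directly on an approximately tight frame) would amount to reproving Theorem \ref{T:reduction1} in a perturbed form.

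The paper resolves the integrality issue differently and more cheaply. It first treats the case where the $d_i$ are rational with a common denominator $M$: then $n_i:=N^2M^2d_i^2$ is exactly an integer, and taking $n_i$ copies of $N^{-1}x_i$ yields an exactly $M^2$-tight frame in $N^{-1}B_H$ with $M^2\ge 1$, to which Theorem \ref{T:reduction1} applies verbatim; unwinding the multiplicities of the extracted subset gives the $c_i$. For general $(d_i)$ it approximates by rationals, obtaining for each $\vp>0$ some admissible $(c_i)\subseteq\{\sqrt{m}/N:m\in\Z\}$ making $(c_ix_i)$ a frame with bounds $(1+\vp)^{-1}A_N$ and $(1+\vp)B_N$; since there are only finitely many admissible sequences $(c_i)$ (the multiplicities are bounded by the upper frame bound in a finite-dimensional space), one fixed choice must work for every $\vp>0$ and hence achieves the bounds $A_N$ and $B_N$ exactly. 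This rational-common-denominator step followed by the finiteness/pigeonhole argument is the ingredient missing from your write-up.
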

\begin{proof}
Let $(d_i)_{i\in I}$ be scalars such that $(d_i x_i)_{i\in I}$ is a Parseval frame.  We first assume that $(d_i)_{i\in I}$ are rational numbers with common denominator $M\in\N$.  Thus, $n_i:=N^2 M^2 d^2_i$ is an integer for all $i\in I$.     We consider the frame $(y_j)_{j\in J}$ which consists of $n_i$ copies of $N^{-1} x_i$ for each $i\in I$.  Thus, we have for $x\in H$ that
$$\sum_{j\in J} |\langle x, y_j\rangle|^2=\sum_{i\in I} N^2 M^2 d_i^2   |\langle x, N^{-1}x_i\rangle|^2=M^2\sum_{i\in I}  |\langle x, d_ix_i\rangle|^2=M^2\|x\|^2.
$$
Thus, $(y_j)_{j\in J}$ is a tight frame of vectors in $N^{-1} B_H$ with frame bound $M^2\geq 1$.  By Theorem \ref{T:reduction1} there is a subset $(y_j)_{j\in J_0}$ with frame bounds $A_N$ and $B_N$.  For each $i\in I$ let $c_i=\sqrt{m_i}/N$ where $m_i$ is the number of copies of $N^{-1} x_i$ in  $(y_j)_{j\in J_0}$.  This gives the following calculation.
$$\sum_{i\in I} |\langle x, c_i x_i\rangle|^2=\sum_{i\in I} m_i|\langle x, N^{-1}x_i\rangle|^2=\sum_{j\in J_0}  |\langle x, y_j\rangle|^2.$$
Thus, $(c_i x_i)_{i\in I}$ is a frame with bounds $A_N, B_N$, the same bounds as $(y_j)_{j\in J_0}$.

We now consider the case that $(d_i)_{i\in I}$ are not all rational.  Given $\vp>0$ we may approximate the coefficients $(d_i)_{i\in I}$ with rational numbers and follow the previous argument to obtain $(c_i)_{i\in I}\subseteq\{ \sqrt{m}/N:m\in\Z\}$ such that $(c_i x_i)_{i\in I}$ is a frame with lower frame bound $(1+\vp)^{-1}A_N$ and upper frame bound $(1+\vp)B_N$.  However, because there are only finitely many possibilities for $(c_i)_{i\in I}$, some choice must work for all $\vp>0$.  Thus, there exists $(c_i)_{i\in I}\subseteq\{ \sqrt{m}/N:m\in\Z\}$ such that $(c_i x_i)_{i\in I}$ is a frame with lower frame bound $A_N$ and upper frame bound $B_N$.

\end{proof}

Theorem \ref{T:QS} gives that if there are scalars $(d_i)_{i\in I}$ such that $(d_i x_i)_{i \in I}$ is Parseval then there are scalars $(c_i)_{i\in I}\subseteq\{ \sqrt{m}/N:m\in\Z\}$ such that $(c_i x_i)_{i\in I}$ is a frame with lower frame bound $A_N$ and upper frame bound $B_N$.  It is interesting to note that the scalars $(c_i)_{i\in I}$ could be very different from the scalars $(d_i)_{i\in I}$.  Indeed, quantizing $(d_i)_{i\in I}$ fails dramatically if we choose $(c_i)_{i\in I}$ to minimize $|d_i-c_i|$ for all $i\in I$, which results in a small $\ell_\infty(I)$ distance between the sequences $(d_i)_{i\in I}$ and $(c_i)_{i\in I}$.  A small $\ell_2(I)$ distance between $(d_i)_{i\in I}$ and $(c_i)_{i\in I}$ can be used to compare the frames $(d_i x_i)_{i \in I}$ and $(c_i x_i)_{i \in I}$ \cite{C2}, but a small $\ell_\infty(I)$ distance tells us nothing.

\subsection{Gabor Frames}\label{S:GF}

 One example of continuous frames that is of particular interest is that of Gabor systems and the short time Fourier transform.  We consider $L^2(\R)$ to be the Hilbert space of square integrable functions from $\R$ to $\C$.
For $a, b \in \R$, we define the translation operator $T_a:L^2(\R) \to L^2(\R)$ and the modulation operator $M_b:L^2(\R) \to L^2(\R)$ by
\[
T_a g(x) = g(x - a) \qquad {\mathrm {and}} \qquad M_b g (x) = e^{2\pi i b x} g(x).
\]
If $g\in L^2(\R)$ then {\em the short time Fourier transform} with window function $g$  is the map $\Psi_g : \R^2 \to L^2(\R)$ given by 
\[
\Psi_g(a,b) = M_b T_a g.
\]
The short time Fourier transform with window function $g$ is a tight continuous frame with frame bound $\|g\|^2$.  That is, $\|g\|^2 \|f\|^2=\int \int |\int \overline{f(x)} e^{2\pi i \omega x}g(x- t) dx|^2 d\omega dt$ for all $f\in L_2(\R)$.
A {\em Gabor frame} of $L_2(\R)$ is a frame of the form $(M_{bn} T_{am} g)_{m,n\in\Z}$ where  $a,b>0$ and $g\in L^2(\R)$.  That is, Gabor frames are formed by sampling the short time Fourier transform at a lattice in $\R^2$.  It is not always the case that  $(M_{bn} T_{am} g)_{m,n\in\Z}$ will be a frame, however by Theorem \ref{T:D} we have the following corollary.
\begin{cor}\label{C:GF}
 For every non-zero $g\in L^2(\R)$ there exists real numbers $(a_k, b_k)_{k=1}^\infty$ such that $(e^{2\pi i b_k x} g(x - a_k))_{k\in\N}$
is a frame of $L^2(\R)$. 
\end{cor}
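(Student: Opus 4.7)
The plan is to deduce Corollary \ref{C:GF} as a direct application of Theorem \ref{T:D} to the short-time Fourier transform $\Psi_g$. The excerpt already records that for every nonzero $g \in L^2(\R)$ the map $\Psi_g : \R^2 \to L^2(\R)$ defined by $\Psi_g(a,b) = M_b T_a g$ is a tight continuous frame of $L^2(\R)$ with frame bound $\|g\|^2 > 0$, where the underlying measure on $\R^2$ is two-dimensional Lebesgue measure $\lambda$. Since $\R^2$ equipped with its Borel $\sigma$-algebra and Lebesgue measure is a positive $\sigma$-finite measure space in which every singleton is measurable, the measure-theoretic hypotheses of Theorem \ref{T:D} are satisfied.

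The crucial observation is that $\Psi_g$ is uniformly bounded on all of $\R^2$: the translation operator $T_a$ and the modulation operator $M_b$ are each unitary on $L^2(\R)$, so
\[
\|\Psi_g(a,b)\|_{L^2(\R)} \;=\; \|M_b T_a g\|_{L^2(\R)} \;=\; \|g\|_{L^2(\R)}
\]
for every $(a,b) \in \R^2$. In particular $\Psi_g$ is bounded $\lambda$-almost everywhere (by $C = \|g\|_{L^2(\R)}$), and since $(a,b) \mapsto M_b T_a g$ is continuous in $L^2$-norm, $\Psi_g$ is Borel measurable. Thus $\Psi_g$ is exactly the sort of bounded continuous frame to which the ``in particular'' clause of Theorem \ref{T:D} applies, taking $\nu = \lambda$.

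Invoking Theorem \ref{T:D} then produces a countable family $((a_k, b_k))_{k \in \N} \subseteq \R^2$ such that $(\Psi_g(a_k, b_k))_{k \in \N}$ is a discrete frame of $L^2(\R)$. Unwinding the definition gives $\Psi_g(a_k, b_k)(x) = (M_{b_k} T_{a_k} g)(x) = e^{2\pi i b_k x}\, g(x - a_k)$, which is precisely the sequence appearing in the statement of the corollary. Since every step reduces either to a standard fact about $L^2(\R)$ (unitarity of $M_b$ and $T_a$, together with the STFT orthogonality relation already recalled in the excerpt) or to a direct invocation of Theorem \ref{T:D}, there is no genuine obstacle here; all of the content lives in the discretization theorem proved earlier.
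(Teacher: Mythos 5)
Your proof is correct and is essentially the paper's own argument: the paper derives Corollary \ref{C:GF} by noting that the short-time Fourier transform $\Psi_g$ is a tight continuous frame with frame bound $\|g\|^2>0$ whose vectors all have norm $\|g\|$ (so it is bounded), and then invoking Theorem \ref{T:D}. You have simply spelled out the measurability and boundedness checks that the paper leaves implicit.
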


\subsection{Frames of exponentials}\label{S:SF}
For each $K>0$, Fourier series gives a Riesz basis of exponentials for $L_2([-\frac{K}{2},\frac{K}{2}])$.  In particular,
$$ K f = \sum_{n\in\Z} \langle f, e^{2\pi i \frac{n}{K}\cdot}\rangle e^{2\pi i \frac{n}{K} \cdot}  \quad\quad \textrm{ for all }f\in L_2([-\frac{K}{2},\frac{K}{2}]).
$$

If $J\subseteq \R$ is any bounded set, then $J\subseteq I$ where $I$ is an interval.  We can take the basis of exponentials for $L_2(I)$ and restrict it to $J$ to get a tight frame of exponentials for 
$L_2(J)$.  However, this does not work if $J$ is unbounded.   This leads to the question: When does $L_2(J)$ have a frame of exponentials? Note  that $J$ must have finite measure for the exponentials to be in $L_2(J)$.  This was solved by Nitzan, Olevskii, and Ulanoskii, who used the same frame partitioning theorem that we need as well \cite{NOU}.

\begin{thm}\label{T:NOU}[Nitzan, Olevskii, and Ulanovskii 2016]
If $J\subseteq\R$ has finite measure then $L_2(J)$ has a frame of exponentials.
\end{thm}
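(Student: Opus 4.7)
The plan is to repackage the family of exponentials on $J$ as a bounded continuous Parseval frame of $L_2(J)$, and then simply invoke Theorem \ref{T:D}.

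First I would define $\Psi:\R\to L_2(J)$ by $\Psi(\lambda)(x)=e^{2\pi i\lambda x}$, interpreting each $\Psi(\lambda)$ as the restriction of the exponential to $J$. Equip $\R$ with Lebesgue measure; this is $\sigma$-finite and has measurable singletons, and $\lambda\mapsto\Psi(\lambda)$ is continuous in $L_2(J)$-norm, hence measurable. For any $f\in L_2(J)$, extend $f$ by zero to an element of $L_2(\R)$. Then
$$\langle f,\Psi(\lambda)\rangle_{L_2(J)}=\int_J f(x)\,e^{-2\pi i\lambda x}\,dx=\widehat f(\lambda),$$
and Plancherel's identity gives
$$\int_\R|\langle f,\Psi(\lambda)\rangle|^2\,d\lambda=\int_\R|\widehat f(\lambda)|^2\,d\lambda=\|f\|_{L_2(\R)}^2=\|f\|_{L_2(J)}^2.$$
Therefore $\Psi$ is a continuous Parseval frame of $L_2(J)$ with respect to Lebesgue measure. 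A direct computation gives $\|\Psi(\lambda)\|_{L_2(J)}^2=|J|<\infty$ for every $\lambda\in\R$, so $\Psi$ is uniformly bounded.

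At this stage I would apply Theorem \ref{T:D}: there exist real numbers $(\lambda_k)_{k\in\mathcal I}$ such that $(\Psi(\lambda_k))_{k\in\mathcal I}$ is a frame of $L_2(J)$. Since $\Psi(\lambda_k)$ is precisely the restriction of $e^{2\pi i\lambda_k x}$ to $J$, this yields the desired frame of exponentials for $L_2(J)$.

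There is no substantive obstacle once the Plancherel observation is made; the genuinely hard step — turning the Marcus--Spielman--Srivastava partitioning estimate of Theorem \ref{T:reduction1} into a sampling theorem for continuous frames — is already absorbed into the proof of Theorem \ref{T:D}. Essentially all one must verify is that the exponentials constitute a bounded continuous Parseval frame, which reduces to Plancherel together with the trivial identity $\|e^{2\pi i\lambda\cdot}|_J\|_{L_2(J)}^2=|J|$.
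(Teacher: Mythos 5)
Your proof is correct and is essentially identical to the paper's argument: both define $\Psi(\lambda)=e^{2\pi i\lambda\cdot}|_J$, observe via Plancherel that the analysis operator is the (isometric) Fourier transform so that $\Psi$ is a bounded continuous Parseval frame, and then invoke the discretization theorem. Your explicit check that $\|\Psi(\lambda)\|^2=|J|<\infty$ is a welcome detail the paper leaves implicit, but the route is the same.
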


Unlike the short time Fourier transform,  the Fourier transform is not a continuous frame. However, if we consider the map $\Psi: \R\rightarrow L_2(J)$ given by $\Psi(x)(t)=e^{2\pi i x t}$ for all $x\in \R$  and $t\in J$ then $\Psi$ has an analysis operator $\Theta:L_2(J)\rightarrow L_2(\R)$ given by 
$$\Theta(f)(x)=\langle f, \Psi(x)\rangle=\int_{t\in J} f(t) e^{2\pi i x t} d\,\lambda(t).
$$
Thus, the analysis operator is the Fourier transform.  The Fourier transform is an isometric embedding, which means that $\Psi$ is a continuous Parseval frame. Hence, by the discretization theorem, $\Psi$ may be sampled to give a frame of exponentials for $L_2(J)$ which gives Theorem \ref{T:NOU} as a corollary.

\section{Frame Partitions}\label{S:part}

Our goal for this section is to prove Theorem \ref{T:reduction1}, on uniformly partitioning frames.  
The main ingredient of the proof is the following theorem of Marcus, Spielman, and Srivistava.

\begin{thm}[MSS Cor 1.5] \label{MSS}  Let $(u_i)^M_{i=1} \subseteq H$ be a Bessel sequence with bound $1$ and $\|u_i\|^2 \leq \delta$ for all $i$. Then for any positive integer $r$, there exists a partition $\{I_1, . . . , I_r\}$ of $[M]$ such that each $(u_i)_{i\in I_j}$ , $ j = 1, . . . , r$  is a Bessel sequence with bound
$$(1/\sqrt{r}+\sqrt{\delta})^2
$$
\end{thm}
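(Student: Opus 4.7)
The plan is to derive this corollary from the main technical result of \cite{MSS}, namely the statement that if $v_1,\ldots,v_M$ are independent, finitely supported random vectors in a Hilbert space satisfying $\sum_i \mathbb{E}[v_i v_i^*] \preceq I$ and $\mathbb{E}\|v_i\|^2 \leq \epsilon$ for all $i$, then with positive probability $\lambda_{\max}\bigl(\sum_i v_i v_i^*\bigr) \leq (1+\sqrt{\epsilon})^2$. The corollary should follow from this via a tensor product lifting that converts ``partitioning into $r$ subfamilies'' into ``producing a single realization of a random collection of vectors in $H \otimes \C^r$.''

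First, since there are only finitely many $u_i$, I may replace $H$ by their finite dimensional span and regard $\sum_i u_i u_i^* \preceq I_H$ as an operator inequality. Let $e_1,\ldots,e_r$ be the standard basis of $\C^r$ and, for each $i \in [M]$, let $Z_i \in [r]$ be independent and uniform. Define the independent random vectors $w_i := \sqrt{r}\,u_i \otimes e_{Z_i} \in H \otimes \C^r$. A direct computation gives
\[
\mathbb{E}[w_i w_i^*] = r\,(u_i u_i^*) \otimes \mathbb{E}[e_{Z_i} e_{Z_i}^*] = u_i u_i^* \otimes I_r,
\]
so that $\sum_i \mathbb{E}[w_i w_i^*] = \bigl(\sum_i u_i u_i^*\bigr) \otimes I_r \preceq I$, together with the deterministic bound $\|w_i\|^2 = r\|u_i\|^2 \leq r\delta$.

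Applying the MSS main theorem with $\epsilon = r\delta$ yields a realization of the $w_i$, equivalently an assignment $i \mapsto j_i \in [r]$ that determines the partition $I_j := \{i : j_i = j\}$, for which $\lambda_{\max}\bigl(\sum_i w_i w_i^*\bigr) \leq (1+\sqrt{r\delta})^2$. In this realization each $w_i w_i^* = r\,u_i u_i^* \otimes e_{j_i} e_{j_i}^*$, so $\sum_i w_i w_i^*$ is block diagonal on $H \otimes \C^r$ with blocks $r\sum_{i \in I_j} u_i u_i^*$. Hence
\[
r \cdot \max_{j} \Bigl\|\sum_{i \in I_j} u_i u_i^*\Bigr\| = \lambda_{\max}\Bigl(\sum_i w_i w_i^*\Bigr) \leq (1+\sqrt{r\delta})^2,
\]
and dividing by $r$ yields the required Bessel bound $(1/\sqrt{r}+\sqrt{\delta})^2$ for each sub-sequence $(u_i)_{i \in I_j}$, using the algebraic identity $(1+\sqrt{r\delta})^2/r = (1/\sqrt{r}+\sqrt{\delta})^2$.

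The main obstacle is not in this reduction, which is a routine tensoring trick, but in the underlying main theorem of \cite{MSS} invoked in the first step: producing the realization with the quoted spectral bound requires their interlacing families argument combined with the multivariate barrier method applied to mixed characteristic polynomials. Granted that deep fact as a black box, the corollary falls out of the tensor computation above.
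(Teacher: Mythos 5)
The paper never proves this statement at all: it is imported verbatim as Corollary 1.5 of \cite{MSS}, so there is no internal argument to compare against. What you have written is, in substance, the argument by which Marcus, Spielman, and Srivastava themselves deduce that corollary from their main probabilistic theorem (Theorem 1.4 of \cite{MSS}), so you have reconstructed the canonical proof rather than found a different route. Your tensor computation checks out: $\mathbb{E}[w_iw_i^*]=(u_iu_i^*)\otimes I_r$, the realized operator $\sum_i w_iw_i^*$ is block diagonal with $j$th block $r\sum_{i\in I_j}u_iu_i^*$, and the algebra $(1+\sqrt{r\delta})^2/r=(1/\sqrt{r}+\sqrt{\delta})^2$ is correct (empty blocks $I_j$ are harmless, being trivially Bessel).

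One caveat deserves attention: Theorem 1.4 of \cite{MSS} is stated for exactly isotropic families, $\sum_i\mathbb{E}[v_iv_i^*]=I$, whereas you quote it with $\preceq I$ --- and you genuinely need the sub-isotropic form, both because $(u_i)$ is only assumed Bessel with bound $1$ and because the tensorized family is then sub-isotropic as well. This is repaired by the standard padding step: write $I-\sum_i\mathbb{E}[w_iw_i^*]\succeq 0$ as a finite sum of rank-one operators $z_kz_k^*$ with $\|z_k\|^2\le\epsilon$ (split each spectral piece $\lambda qq^*$ into enough copies of $(\lambda/N)qq^*$), adjoin the $z_k$ as deterministic random vectors, apply Theorem 1.4 to the enlarged family, and note that $\lambda_{\max}\bigl(\sum_i w_iw_i^*\bigr)\le\lambda_{\max}\bigl(\sum_i w_iw_i^*+\sum_k z_kz_k^*\bigr)\le(1+\sqrt{\epsilon})^2$. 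With that one-line patch your proof is complete and matches the source's derivation.
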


We will be applying Theorem \ref{MSS} for $r=2$ to partition a Parseval frame into two sets with Bessel bound close to $1/2$.  Theorem \ref{MSS} gives good control of the upper frame bound when partitioning a Parseval frame, but we need to control the lower frame bound as well.  The following theorem can be applied to show that if a Parseval frame is partitioned into two sets with upper frame bound close to $1/2$ then the sets also have lower frame bound close to $1/2$.

\begin{thm}[BCMS Cor 4.6]\label{46} Let $P:\ell^2(I)\rightarrow\ell^2(I)$ be orthogonal projection onto a closed subspace $H\subseteq \ell^2(I)$.  Then for any subset $J\subset I$ and $\delta>0$, TFAE
\begin{enumerate}
\item $\{P e_i\}_{i\in J}$ is a frame of $H$ with frame bounds $\delta$ and $1-\delta$.
\item $\{P e_i\}_{i\in J^c}$ is a frame of $H$ with frame bounds $\delta$ and $1-\delta$.
\item Both $\{P e_i\}_{i\in J}$ and $\{P e_i\}_{i\in J^c}$ are Bessel with bounds $1-\delta$.
\item Both $\{(I-P) e_i\}_{i\in J}$ and $\{(I-P) e_i\}_{i\in J^c}$ are Riesz sequences with lower bound $\delta$.
\end{enumerate}

\end{thm}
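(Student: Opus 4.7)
The plan is to exploit the elementary fact that $(Pe_i)_{i\in I}$ is itself a Parseval frame for $H$. Since $Px=x$ for every $x\in H$, we have $\langle x,e_i\rangle = \langle Px,e_i\rangle = \langle x, Pe_i\rangle$, and hence
\begin{equation*}
\|x\|^2 \;=\; \sum_{i\in I}|\langle x, e_i\rangle|^2 \;=\; \sum_{i\in J}|\langle x, Pe_i\rangle|^2 + \sum_{i\in J^c}|\langle x, Pe_i\rangle|^2 \qquad\text{for all } x\in H.
\end{equation*}
Once this identity is in hand, every comparison between the two sums is a matter of arithmetic.

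First I would establish (1) $\Leftrightarrow$ (2) $\Leftrightarrow$ (3). The implication (1) $\Rightarrow$ (3) is immediate: the upper bound in (1) gives the Bessel bound $1-\delta$ on $J$, and the identity above then transfers that upper bound to $J^c$. Conversely, if both pieces are Bessel with bound $1-\delta$ as in (3), then subtracting from $\|x\|^2$ forces $\sum_{i\in J}|\langle x, Pe_i\rangle|^2 \geq \delta\|x\|^2$, which is the lower frame bound in (1); the analogous argument gives (2). The equivalence (1) $\Leftrightarrow$ (2) then follows at once from the symmetry of the identity between $J$ and $J^c$.

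Second, to handle (3) $\Leftrightarrow$ (4), I would translate the Bessel and Riesz conditions into operator-norm statements about $P$ and $Q := \Id - P$. Given $c\in\ell^2(J)$, let $\tilde c\in\ell^2(I)$ denote its extension by zero on $J^c$. The synthesis operator of $\{Pe_i\}_{i\in J}$ sends $c$ to $P\tilde c$, so $\{Pe_i\}_{i\in J}$ is Bessel with bound $1-\delta$ precisely when $\|P\tilde c\|^2\leq (1-\delta)\|\tilde c\|^2$ for every $\tilde c$ supported in $J$. Similarly, $\{Qe_i\}_{i\in J}$ is a Riesz sequence with lower bound $\delta$ precisely when $\|Q\tilde c\|^2\geq \delta\|\tilde c\|^2$ for every such $\tilde c$. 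Because $\|P\tilde c\|^2+\|Q\tilde c\|^2=\|\tilde c\|^2$, these two inequalities are equivalent, and repeating the argument with $J^c$ in place of $J$ yields the full equivalence (3) $\Leftrightarrow$ (4).

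There is no genuine obstacle once the dictionary between frame-theoretic and operator-theoretic language is fixed: the Bessel bound equals the squared norm of the synthesis operator, while the Riesz lower bound equals the squared lower bound of that same operator. The only care needed is to move between coefficient vectors in $\ell^2(J)$ and their zero extensions in $\ell^2(I)$ so that the Pythagorean identity $\|P\tilde c\|^2+\|Q\tilde c\|^2=\|\tilde c\|^2$ applies directly, and to remember that the upper and lower Parseval complementarity requires working with $x\in H$ while the Riesz/Bessel duality requires working with $\tilde c\in\ell^2(I)$.
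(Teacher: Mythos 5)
Your argument is correct, and it is worth noting that the paper itself offers no proof of this statement at all: it is imported verbatim as Corollary 4.6 of the cited Bownik--Casazza--Marcus--Speegle paper. Your proof is the standard one and is self-contained: the Parseval identity $\|x\|^2=\sum_{i\in J}|\langle x,Pe_i\rangle|^2+\sum_{i\in J^c}|\langle x,Pe_i\rangle|^2$ for $x\in H$ handles $(1)\Leftrightarrow(2)\Leftrightarrow(3)$, and the Pythagorean identity $\|P\tilde c\|^2+\|(I-P)\tilde c\|^2=\|\tilde c\|^2$ applied to zero-extended coefficient sequences, together with the fact that the Bessel bound is the squared norm of the synthesis operator, handles $(3)\Leftrightarrow(4)$. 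One wording slip to fix: in $(1)\Rightarrow(3)$ you say the identity ``transfers that upper bound to $J^c$,'' but it is the \emph{lower} frame bound $\delta$ on $J$ that yields the Bessel bound $1-\delta$ on $J^c$ (an upper bound on $J$ would only give a lower bound on the complement); your treatment of the converse direction shows you have the right mechanism, so this is cosmetic. You might also remark that the upper Riesz bound in $(4)$ is automatic since $\|(I-P)\tilde c\|\le\|\tilde c\|$, so only the lower Riesz inequality needs checking.
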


We will be repeatedly partitioning a frame using Theorem \ref{MSS} and then  applying a positive self-adjoint invertible operator to the resulting sets.  The following simple lemma allows us to keep track of what the operators do to the frame bounds.

\begin{lem}\label{L:operator}
Let $(x_j)_{j\in J}$ be a Parseval frame of a Hilbert space $H$. Let
$T$ be a positive  self adjoint invertible operator on $H$.  Then
$(Tx_j)_{j\in J}$ is a frame of $H$ with upper frame bound $\|T\|^2$
and lower frame bound $\|T^{-1}\|^{-2}$
\end{lem}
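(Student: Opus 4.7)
The plan is to compute $\sum_{j\in J}|\langle x, Tx_j\rangle|^2$ directly, transferring $T$ to the other side of the inner product by self-adjointness, and then apply the Parseval property of $(x_j)_{j\in J}$. Concretely, for any $x\in H$,
\[
\sum_{j\in J}|\langle x, Tx_j\rangle|^2 \;=\; \sum_{j\in J}|\langle Tx, x_j\rangle|^2 \;=\; \|Tx\|^2,
\]
where the first equality uses $T=T^*$ and the second uses that $(x_j)_{j\in J}$ is a Parseval frame.

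Once the frame sum is identified with $\|Tx\|^2$, both bounds are operator-norm estimates. For the upper bound I would simply invoke $\|Tx\|^2\leq \|T\|^2\|x\|^2$. For the lower bound I would use invertibility of $T$: from $x=T^{-1}(Tx)$ we obtain $\|x\|\leq \|T^{-1}\|\|Tx\|$, equivalently $\|Tx\|^2\geq \|T^{-1}\|^{-2}\|x\|^2$. Combining these with the displayed identity gives
\[
\|T^{-1}\|^{-2}\|x\|^2 \;\leq\; \sum_{j\in J}|\langle x, Tx_j\rangle|^2 \;\leq\; \|T\|^2\|x\|^2,
\]
which is exactly the claimed frame inequality with the stated constants.

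There is no real obstacle here; positivity of $T$ is not actually needed for the bounds themselves (only self-adjointness and invertibility), and the routine nature of the argument is what makes this lemma useful as a bookkeeping tool in the subsequent partitioning arguments. The only minor point to state cleanly is that $T$ being invertible on a Hilbert space guarantees $T^{-1}$ is bounded, so $\|T^{-1}\|<\infty$ and the lower bound is strictly positive.
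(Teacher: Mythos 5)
Your proof is correct and is essentially identical to the paper's: both reduce the frame sum to $\|Tx\|^2$ via self-adjointness and the Parseval property, then apply the operator-norm bounds $\|Tx\|\leq\|T\|\,\|x\|$ and $\|x\|\leq\|T^{-1}\|\,\|Tx\|$. Your side remark that positivity of $T$ is not needed here is also accurate.
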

\begin{proof}
Let $x\in H$.  To calculate the upper frame bound we have the
following inequalities.
$$\sum_{j\in J}|\langle x,Tx_j\rangle|^2=\sum_{j\in J}|\langle
Tx,x_j\rangle|^2 =\|Tx\|^2\leq\|T\|^2\|x\|^2.
$$
Thus $\|T\|^2$ is the upper frame bound.  To calculate the lower
frame bound we have the following inequalities.
$$\|T^{-1}\|^{-2}\|x\|^2\leq \|Tx\|^2= \sum_{j\in J}|\langle Tx,x_j\rangle|^2=\sum_{j\in J}|\langle
x,Tx_j\rangle|^2.
$$
Thus, $\|T^{-1}\|^{-2}$ is the lower frame bound.

\end{proof}

We are now ready to prove the main result of this section.  The first part appears essentially as Lemma 2 in  \cite{NOU}, but we include a proof here for completion as it is essentially the uniform discretization problem for continuous frames with finite support.

\begin{thm}\label{T:reduction}
There exists uniform constants $A,B>0$ such that every tight frame of vectors in the unit ball of a finite dimensional Hilbert space $H$ with frame bound greater than $1$ can be partitioned into a collection of frames of $H$ each with lower frame bound $A$ and upper frame bound $B$.  Moreover, there exists sequences of constants $(A_n)_{n\in\N},(B_n)_{n\in\N}$ with $\lim_{n\rightarrow\infty}A_n=\lim_{n\rightarrow\infty}B_n=1$ such that  for every $N\in\N$ and every tight frame of vectors in $N^{-1}B_H$ with frame bound greater than $1$ has a subset with lower frame bound $A_N$ and upper frame bound $B_N$. 
\end{thm}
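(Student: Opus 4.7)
The plan is to iteratively apply Theorem~\ref{MSS} with $r=2$, pairing it at each step with Theorem~\ref{46} to recover a lower frame bound. The key recursive step is: given a (not necessarily tight) frame $(y_i)_{i\in J}$ of $H$ with bounds $\alpha,\beta$ and $\|y_i\|\le M$, its Parseval renormalization $(S^{-1/2}y_i)$ (where $S$ is the frame operator) satisfies $\|S^{-1/2}y_i\|^{2}\le M^{2}/\alpha$. Provided $M^{2}/\alpha<(1-1/\sqrt{2})^{2}$, Theorem~\ref{MSS} partitions $J=J_{1}\sqcup J_{2}$ with each Parseval block Bessel-bounded by $\gamma:=(1/\sqrt{2}+M/\sqrt{\alpha})^{2}<1$, and Theorem~\ref{46} then asserts that each $(S^{-1/2}y_i)_{i\in J_k}$ is a frame of $H$ with bounds $1-\gamma,\gamma$. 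Pulling this back through the sandwich $\alpha I\le S\le \beta I$ (cf.\ Lemma~\ref{L:operator}) shows that each $(y_i)_{i\in J_k}$ is a frame of $H$ with bounds $\alpha(1-\gamma)$ and $\beta\gamma$.

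For the first conclusion ($M=1$) I iterate this recursive step on each piece, continuing as long as the current lower bound $\alpha$ exceeds a fixed threshold $K^{\ast}>(1-1/\sqrt{2})^{-2}$ chosen so every invocation of MSS is legal. Each step approximately halves both bounds, so the process terminates after $O(\log K)$ levels. Since $\alpha_j\sim K/2^{j}$, the per-step deviation is $\gamma_j-\tfrac12=O(1/\sqrt{\alpha_j})$, and the geometric sum $\sum_j 1/\sqrt{\alpha_j}\lesssim (1/\sqrt K)\sum_j (\sqrt 2)^j\lesssim 1/\sqrt{K^{\ast}}$ is uniform in $K$. Hence $\prod_j\gamma_j$ and $\prod_j(1-\gamma_j)$ both stay within a bounded multiplicative factor of $2^{-J}$, giving uniform constants $A,B>0$ that bound the frame bounds of every piece of the final partition.

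For the second conclusion ($M=1/N$) I run the same iteration but discard one of the two pieces at each level, producing a nested chain of subsets. Now $\gamma_j-\tfrac12=O(1/(N\sqrt{\alpha_j}))$, and the analogous geometric sum yields $\sum_j(\gamma_j-\tfrac12)=O(1/N)$ uniformly in $K$. After $J=\lfloor\log_{2}K\rfloor$ steps the surviving subset has bounds in $[\kappa(1-O(1/N)),\kappa(1+O(1/N))]$ for some $\kappa\in[1,2]$, which is nearly tight. To drive $B_N$ down to $1$ as well, I would apply one final refinement step: partition this nearly-tight subset via Theorem~\ref{MSS} with $r=\lceil N^{2/3}\rceil$ pieces and take the union of $m=\lceil r/\kappa\rceil$ blocks, so that the rounding error $O(\kappa/r)$ and the MSS error $O(\sqrt{r/\kappa}/N)$ balance at $O(N^{-1/3})$ and give a final subset with bounds in $[1-O(N^{-1/3}),1+O(N^{-1/3})]$.

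The main obstacle is controlling the cumulative multiplicative error across the $\log_{2}K$ halving iterations while keeping $\alpha_j$ above the MSS threshold. The geometric growth $\epsilon_j\sim 2^{j/2}/(N\sqrt{K})$ is compensated exactly by the $1/\sqrt{K}$ prefactor, producing a sum uniform in $K$. A secondary difficulty is that the subset entering the refinement step is non-tight, so the refinement must Parseval-renormalize via $S^{-1/2}$; the resulting multiplicative factors $\alpha'/\kappa,\beta'/\kappa\in[1-O(1/N),1+O(1/N)]$ are small enough to be absorbed into the final $O(N^{-1/3})$ error.
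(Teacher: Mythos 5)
Your proof of the first assertion is essentially the paper's own: the same loop of Parseval renormalization via $S^{-1/2}$, Theorem~\ref{MSS} with $r=2$, the complement trick of Theorem~\ref{46}, and Lemma~\ref{L:operator}, with uniformity of $A,B$ coming from the convergent sum $\sum_j \alpha_j^{-1/2}\lesssim (K^{*})^{-1/2}$ --- exactly the paper's $\sum 10\,B_m^{-1/2}$ computation (one caution: take $K^{*}$ comfortably above $(1-1/\sqrt2)^{-2}\approx 11.7$, as the paper does with its window $[79,200)$, so each factor $1-\gamma_j$ is bounded away from $0$ and not merely positive). For the ``moreover'' clause you genuinely diverge. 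The paper rescales by $N$, partitions into many pieces whose frame-bound ratio is at most $1+\vp$ (so each piece has upper bound $b_\vp N^{-2}<N^{-1}$ after rescaling back), and greedily unions pieces until the upper bound first exceeds $1$. You instead run the halving chain all the way down to bounds $\kappa\in[1,2)$ and finish with a single MSS split into $r=\lceil N^{2/3}\rceil$ blocks, keeping $\lceil r/\kappa\rceil$ of them; the lower bound of that union follows from the Parseval identity applied to the complementary $r-m$ blocks, which is the same mechanism as Theorem~\ref{46} extended to $r>2$. Your variant works and gives an explicit rate --- in fact $1\pm O(N^{-2/3})$ rather than your stated $N^{-1/3}$, since both the rounding error $\kappa/r$ and the MSS error $\sqrt{r}/N$ are $O(N^{-2/3})$ at this choice of $r$, so the slip is in the safe direction --- while the paper's greedy recombination avoids a second MSS application at the cost of the ``smallest $\vp$ with $D_\vp\le N$'' bookkeeping. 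For the few small $N$ where $1/(N^2\alpha_j)<(1-1/\sqrt2)^2$ fails near $\alpha_j\approx 1$, fall back to $A_N=A$, $B_N=B$, as the paper does for $N\le D_1$.
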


\begin{proof}
We prove the first claim of the theorem and then discuss at the end how the proof could be adapted to prove the moreover claim.
 For convenience, we will only consider tight frames with frame bound at least $79$.  Then we will find a uniform constant $B> 79$ so that every such tight frame can be partitioned into frames with upper frame bound $B$ and lower frame bound $79$.  Thus, any tight frame in the unit ball of a finite dimensional Hilbert space with frame bound greater than $1$ could be partitioned into a set of frames of $H$ with upper frame bound $B$ and  lower frame bound $1$. 

The proof will involve repeated application of Theorem \ref{MSS} so that at each step we will partition a frame into two frames with the same upper frame bound and same lower frame bound.  We will then choose one of those frames to partition further until we arrive at a set which is close to being tight and has small upper frame bound.  As we could do the same procedure to the frames not chosen, we are able to partition our original frame into frames which are close to being tight and have small upper frame bound.

 Assume that $(x_j)_{j\in J_0}$ is a tight frame in the unit ball of $H$ with frame bound  $B_0\geq 79$.  
 We recursively define a decreasing sequence $B_0>B_1>\cdots > B_n$ by $B_{m+1}= 2^{-1}B_m-2^{1/2}B_m^{1/2}-1$ for all $1\leq m< n$
where $n\in\N_0$ is such that $200>B_n\geq 79$.  Note that $79=
2^{-1}200-2^{1/2}200^{1/2}-1$ and thus there is a unique $n\in\N_0$ such that $200>B_n\geq 79$.
 We will choose by induction a nested sequence of subsets
$J_0\supseteq J_1\supseteq... \supseteq J_n$  so that if $1\leq
m\leq n$, $T_0$ is the identity, and $T_m$ is the frame operator of
$(T^{-1/2}_{m-1}...T^{-1/2}_1 B_0^{-1/2} x_j)_{j\in J_{m}}$ then
\begin{equation}\label{E:1}
\|T^{-1/2}_{m}...T^{-1/2}_0 B_0^{-1/2}\|^2\leq B_m^{-1}\quad\textrm{ for }0\leq m\leq n,
\end{equation}
\begin{equation}\label{E:2}
\|T_{m}\|\leq 2^{-1}+ 2^{1/2}B_m^{-1/2}+B_m^{-1} \quad\textrm{ for }1\leq m\leq n,
\end{equation}
\begin{equation}\label{E:3}
\|T_m^{-1}\|\leq (2^{-1}- 2^{1/2}B_m^{-1/2}-B_m^{-1})^{-1} \quad\textrm { for }1\leq m\leq n.
\end{equation}

For the base case $m=0$ we have that  \eqref{E:1},  \eqref{E:2} and \eqref{E:3} are all trivially satisfied.

 Let $0\leq m<n$ and assume that $J_0\supseteq\cdots \supseteq J_m$ have been chosen to satisfy \eqref{E:1}, \eqref{E:2}, and \eqref{E:3}.
 As $T_{m}$ is the frame operator of $(T^{-1/2}_{m-1}...T^{-1/2}_1 B_0^{-1/2} x_j)_{j\in J_{m}}$ we have that $(T^{-1/2}_{m}...T^{-1/2}_1 B_0^{-1/2} x_j)_{j\in J_{m}}$ is a Parseval frame.  Furthermore, $\|T^{-1/2}_{m}...T^{-1/2}_1 B_0^{-1/2} x_j\|\leq B^{-1}_m$ for all $j\in J_m$ by \eqref{E:1}, thus we may apply Theorem \ref{MSS} with $r=2$ to obtain $J_{m+1}\subseteq J_m$ such that both $(T^{-1/2}_{m}...T^{-1/2}_1B_0^{-1/2}x_j)_{j\in J_{m+1}}$ and $(T^{-1/2}_{m}...T^{-1/2}_1B_0^{-1/2}x_j)_{j\in J_m\setminus J_{m+1}}$ have Bessel bounds $2^{-1}+ 2^{1/2}B_m^{-1/2}+B_m^{-1}$.
As $B_m\geq 200$, we have that this bound is smaller than 1. By
Theorem \ref{46} we have that
$(T^{-1/2}_{m}...T^{-1/2}_1B_0^{-1/2}x_j)_{j\in J_{m+1}}$ has lower
frame bound $2^{-1}- 2^{1/2}B_m^{-1/2}-B_m^{-1}>0$.  Thus the frame
operator $T_{m+1}$ of
$(T^{-1/2}_{m}...T^{-1/2}_1B_0^{-1/2}x_j)_{j\in J_{m+1}}$ has
$\|T_{m+1}\|\leq 2^{-1}+ 2^{1/2}B_m^{-1/2}+B_m^{-1}$ and
$\|T_{m+1}^{-1}\|\leq (2^{-1}- 2^{1/2}B_m^{-1/2}-B_m^{-1})^{-1}$
which satisfies inequality \eqref{E:2} and \eqref{E:3}.   We have that
\begin{align*}
\|T_{m+1}^{-1/2}...T^{-1/2}_1 B_0^{-1/2}\|^2 &\leq \|T_{m+1}^{-1/2}\|^2\|T^{-1/2}_{m}...T^{-1/2}_1B_0^{-1/2}\|^2\\
&\leq  \|T_{m+1}^{-1/2}\|^2 B_m^{-1} \quad\textrm{ by }\eqref{E:1}\\
&=  \|T_{m+1}^{-1}\| B_m^{-1} \quad\textrm{ as }T_{m+1}\textrm{ is a positive operator}\\
&\leq  (2^{-1}- 2^{1/2}B_m^{-1/2}-B_m^{-1})^{-1} B_m^{-1} \quad\textrm{ by }\eqref{E:3}\\
&=  (2^{-1}B_m- 2^{1/2}B_m^{1/2}-1)^{-1}=B_{m+1}^{-1}
\end{align*}
Thus, the inequality \eqref{E:1} is satisfied and our induction is
complete.

We have that $(T^{-1/2}_{n}...T^{-1/2}_1 B_0^{-1/2} x_j)_{j\in
J_{n}}$ is a Parseval frame and that $79\leq B_n<200$.  By Lemma \ref{L:operator} we have that
$(x_j)_{j\in J_n}$ is a frame with upper frame bound
$\|T^{1/2}_{n}...T^{1/2}_1 B_0^{1/2}\|^2$ and lower frame bound $\|T^{-1/2}_{n}...T^{-1/2}_1
B_0^{-1/2}\|^{-2}$.
By \eqref{E:2}, the upper frame bound of $(x_j)_{j\in J_n}$ is at most
$$ \|T^{1/2}_{n}...T^{1/2}_1 B_0^{1/2}\|^2\leq \|T_{n}\|...\|T_1\| B_0\leq B_0\prod_{0\leq m< n} (2^{-1}+ 2^{1/2}B_m^{-1/2}+B_m^{-1})=:B.$$

 By \eqref{E:3}, the lower frame bound of $(x_j)_{j\in J_n}$ is at least 
$$ \|T_{n}^{-1}\|^{-1}...\|T^{-1}_1\|^{-1} B_0\geq B_0\prod_{0\leq m< n} (2^{-1}- 2^{1/2}B_m^{-1/2}-B_m^{-1})= B_0\prod_{0\leq m< n} B_{m+1}B_m^{-1}=B_n=:A.$$

We now have an upper frame bound $B$ and lower frame bound $A$ for $(x_j)_{j\in J_n}$.
  If there exists a constant $C$ such that the ratio of the frame bounds $B/A$ is uniformly bounded by $C$, then we would have a lower frame bound of $A=B_n\geq 79$ and an upper frame bound of $B\leq AC\leq 200C$.  This would prove that every tight frame of vectors in the unit ball of a finite dimensional Hilbert with frame bound greater than $79$ can be partitioned into frames each of which has upper frame bound $200C$ and lower frame bound $79$.
Thus, all we need to prove is that $B/A$ is uniformly bounded.  We have that 
\begin{align*}
\ln (B/A)&= \ln(\prod \frac{ 2^{-1}+ 2^{1/2}B_m^{-1/2}+B_m^{-1}}{2^{-1}- 2^{1/2}B_m^{-1/2}-B_m^{-1}}) \\
&=\ln( \prod \frac{ 1+ 2^{3/2}B_m^{-1/2}+2B_m^{-1}}{1 - 2^{3/2}B_m^{-1/2}-2B_m^{-1}})\\
&\leq \ln(\prod 1+ 10 B_m^{-1/2} )\quad \textrm{ as }B_m\geq 79 \\
&=\sum \ln(1+ 10 B_m^{-1/2} )\\
&\leq\sum 10 B_m^{-1/2} \\
&\leq\sum_{m=0}^\infty 10 (79/200)^{m/2} 79 ^{-1/2}<\infty.
\end{align*}

Thus, we have proven that every  tight frame of vectors in the unit ball of a finite dimensional Hilbert space $H$ with frame bound greater than $1$ can be partitioned into a collection of frames of $H$ each with lower frame bound $A$ and upper frame bound $B$.   As part of the proof, we implicitly showed that for all $\vp>0$ there exists $a_\vp,b_\vp,D_\vp>0$ such that any tight frame of vectors in the unit ball of a finite dimensional Hilbert space with frame bound greater than $D_\vp$ may be partitioned into frames with frame bounds $a_\vp<b_\vp \leq D_\vp$ such that $b_\vp/a_\vp<1+\vp$.   We now show how this can be used to prove the moreover claim.  For $1\leq n\leq D_1$ we let $A_n=A$ and $B_n=B$.
Let $N\in\N$ such that $D_1< N$.  Choose $\vp>0$ to be the smallest value such that $D_\vp\leq N$.  Set $B_{N}=1+N^{-1}$ and $A_N=B_N (1+\vp)^{-1}$.  Note that $\lim A_N=\lim B_N=1$.  We now need to show that every tight frame of vectors with norm at most $N^{-1}$ and frame bound greater than 1 contains a subset with frame bounds $A_N$ and $B_N$.

Let 
$(x_j)_{j\in J}$ be a tight frame of vectors with norm at most $N^{-1}$ and frame bound greater than 1.    Thus,  $(N x_j)_{j\in J}$ is a tight frame of vectors in the unit ball of $H$ with frame bound greater than $D_\vp$ and hence may be partitioned into frames with frame bounds $a_\vp$ and $b_\vp$.  This gives a partition of $(x_j)_{j\in J}$ into frames $((x_j)_{j\in J_n})_{n\leq M}$ each with bounds $a_\vp N^{-2}$ and $b_\vp N^{-2}$.  Let $(x_j)_{j\in I}$ be a frame formed by combining frames in $((x_j)_{j\in J_n})_{n\leq M}$ such that $(x_i)_{i\in I}$ has the smallest possible upper frame bound greater than 1.  If we remove some frame $(x_j)_{j\in J_n}$ from $(x_j)_{j\in I}$ then the resulting frame $(x_j)_{j\in I\setminus J_n}$ has an upper frame bound of 1.  Thus, $(x_j)_{j\in I}$ has upper frame bound $B_N=1+N^{-1}$ as $(x_j)_{j\in J_n}$ has upper frame bound $b_\vp N^{-2}<N^{-1}$.  As $(x_j)_{j\in I}$ is a union of frames whose ratio of their frame bounds is at most $1+\vp$, we have that the ratio of the frame bounds of $(x_j)_{j\in I}$ is at most $1+\vp$.  Thus,  $(x_j)_{j\in I}$ has lower frame bound $A_N=B_N (1+\vp)^{-1}$.

\end{proof}

Theorem \ref{T:reduction} is stated only for tight frames with frame bound greater than 1.  The following corollary applies to partitioning any frame with lower frame bound greater than 1.

\begin{cor}\label{C:frame_part}
Let $(f_j)_{j\in J}$ be a frame of a Hilbert space $H$ with upper frame bound $B_0$ and lower frame bound $A_0\geq 1$ such that $\|f_j\|\leq1$ for all $j\in J$ then $(f_j)_{j\in J}$
can be partitioned into a collection of frames of $H$ each with lower frame bound $A$ and upper frame bound $B B_0 A_0^{-1}$.  Where $A$ and $B$ are the constants given in Theorem \ref{T:reduction}.
\end{cor}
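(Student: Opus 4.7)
The plan is to reduce Corollary~\ref{C:frame_part} to Theorem~\ref{T:reduction} (or to Corollary~\ref{C:InfPart} in the separable infinite-dimensional case) by first flattening $(f_j)_{j\in J}$ into a tight frame via the canonical Parseval scaling, and then transferring the resulting partition back through an auxiliary positive operator.  Let $S$ denote the frame operator of $(f_j)_{j\in J}$, so that $A_0 I \leq S \leq B_0 I$ and, in particular, $\|S^{-1/2}\|\leq A_0^{-1/2}$.  Set $g_j := A_0^{1/2} S^{-1/2} f_j$.  A short computation, using that $(S^{-1/2} f_j)_{j\in J}$ is a Parseval frame, shows that $(g_j)_{j\in J}$ is tight with frame bound $A_0$; and since $\|g_j\|\leq A_0^{1/2}\|S^{-1/2}\|\|f_j\|\leq 1$, the vectors $g_j$ all lie in the unit ball of $H$.

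If $A_0>1$, applying Theorem~\ref{T:reduction} to $(g_j)_{j\in J}$ produces a partition $\{J_i\}$ of $J$ such that each family $(g_j)_{j\in J_i}$ is a frame of $H$ with lower frame bound $A$ and upper frame bound $B$.  The borderline case $A_0=1$ is handled by the trivial one-piece partition of $(f_j)_{j\in J}$, whose bounds $1$ and $B_0$ are admissible because the constants supplied by Theorem~\ref{T:reduction} may be taken to satisfy $A\leq 1\leq B$.  To lift the partition back to $(f_j)$, write $f_j = T g_j$ where $T := A_0^{-1/2} S^{1/2}$ is positive and self-adjoint with $\|T\|^2\leq B_0 A_0^{-1}$ and $\|T^{-1}\|\leq 1$.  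The identity $\langle x, T g_j\rangle = \langle Tx, g_j\rangle$ then gives
\[
A\|x\|^2 \;\leq\; A\|Tx\|^2 \;\leq\; \sum_{j\in J_i} |\langle x, f_j\rangle|^2 \;\leq\; B\|Tx\|^2 \;\leq\; B B_0 A_0^{-1}\|x\|^2,
\]
using $\|Tx\|\geq \|T^{-1}\|^{-1}\|x\|\geq \|x\|$ on the left and $\|Tx\|\leq \|T\|\|x\|$ on the right; this is essentially Lemma~\ref{L:operator} applied to the frame $(g_j)_{j\in J_i}$ and the operator $T$.  Hence each $(f_j)_{j\in J_i}$ has the advertised frame bounds, completing the argument.

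The step that requires real thought, rather than a genuine obstacle, is arranging the scaling factor so that two competing constraints hold simultaneously: the vectors $g_j$ must remain in the unit ball in order for Theorem~\ref{T:reduction} to apply, while the tight frame bound of $(g_j)_{j\in J}$ must still be at least $1$ for the frame-bound hypothesis of that theorem to be met.  The joint assumptions $\|f_j\|\leq 1$ and $A_0\geq 1$ are exactly what make the choice $g_j = A_0^{1/2}S^{-1/2}f_j$ succeed, and the remainder of the argument is routine operator-norm bookkeeping.
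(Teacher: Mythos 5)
Your proposal is correct and follows essentially the same route as the paper: both scale by $A_0^{1/2}S^{-1/2}$ to produce a tight frame with bound $A_0\geq 1$ lying in the unit ball, apply Theorem \ref{T:reduction}, and transfer the partition back through the positive operator, yielding the bounds $A$ and $BB_0A_0^{-1}$. The only (harmless) difference is that you explicitly flag the borderline case $A_0=1$, which the paper passes over.
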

\begin{proof}
Let $T$ be the frame operator of $(f_j)_{j\in J}$.  Then $\|T\|\leq B_0$ and $\|T^{-1}\|\leq A_0^{-1}$.  We have that $(A_0^{1/2}T^{-1/2}f_j)_{j\in J}$ is a tight frame with frame bound $A_0\geq1$.  For all $j\in J$ we have that $\|A_0^{1/2}T^{-1/2}f_j\|\leq A_0^{1/2}\|T^{-1}\|^{1/2}\|f_j\|\leq1$. By Theorem \ref{T:reduction} there is a   partition $(J_n)_{1\leq n\leq M}$ of $J$ such that $(A_0^{1/2}T^{-1/2}f_j)_{j\in J_n}$ has upper frame bound $B$ and lower frame bound $A$ for each $1\leq n\leq M$.
Let $x\in H$ and $1\leq n\leq M$.  Then,
$$\sum_{j\in J_n} |\langle f_j,x\rangle|^2=\sum_{j\in J_n} |\langle A_0^{1/2} T^{-1/2} f_j, A_0^{-1/2} T^{1/2}x\rangle|^2\leq B \|A_0^{-1/2}T^{1/2} x\|^2\leq B A_0^{-1} B_0\|x\|^2
$$
Thus, $(f_j)_{j\in J_n}$ has upper frame bound $B B_0 A_0^{-1}$.  We now check the lower frame bound.
$$\sum_{j\in J_n} |\langle f_j,x\rangle|^2=\sum_{j\in J_n} |\langle A_0^{1/2} T^{-1/2} f_j, A_0^{-1/2} T^{1/2}x\rangle|^2\geq A \|A_0^{-1/2}T^{1/2} x\|^2\geq A A_0^{-1} A_0\|x\|^2=A\|x\|^2
$$
Thus, $(f_j)_{j\in J_n}$ has lower frame bound $A$.

\end{proof}

We now restate and prove Corollary \ref{C:InfPart} from the Introduction, which proves that Theorem \ref{T:reduction} holds for infinite frames as well.
\begin{cor}
Let $A,B>0$ be the constants given in Theorem \ref{T:reduction1}. Then every tight frame of vectors in the unit ball of a  separable Hilbert space $H$ with frame bound greater than $1$ can be partitioned into a finite collection of frames of $H$ each with lower frame bound $A$ and upper frame bound $B$.
\end{cor}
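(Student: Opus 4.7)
The plan is to reduce to the finite-dimensional statement (Theorem \ref{T:reduction1}) by projecting onto an exhausting sequence of finite-dimensional subspaces and extracting a common partition via compactness. Since $H$ is separable and the frame is tight with finite bound $K$, at most countably many $x_j$ are nonzero (for each $e$ in a countable dense subset, $\sum_j|\langle e,x_j\rangle|^2=K\|e\|^2<\infty$, and we can place any zero vectors into $J_1$ at the end), so we may assume $J=\N$. Fix an orthonormal basis $(e_k)$ of $H$, let $H_n=\mathrm{span}\{e_1,\ldots,e_n\}$, and let $P_n$ denote orthogonal projection onto $H_n$. For each $n$, $(P_n x_j)_{j\in\N}$ is a tight frame of $H_n$ with bound $K$ and with each $\|P_n x_j\|\le 1$, so by Theorem \ref{T:reduction1} there is a partition $\N=J_1^{(n)}\sqcup\cdots\sqcup J_R^{(n)}$ such that each $(P_n x_j)_{j\in J_i^{(n)}}$ has frame bounds $A,B$ on $H_n$; examining the proof of Theorem \ref{T:reduction1}, the number of pieces is bounded by some $R=R(K)$ depending only on $K$ (its recursion depth depends only on $K$), so we may take $R$ uniform in $n$ by padding with empty sets. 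Since $\langle x,P_n x_j\rangle=\langle x,x_j\rangle$ for $x\in H_n$, the partition gives
$$A\|x\|^2\le \sum_{j\in J_i^{(n)}}|\langle x,x_j\rangle|^2\le B\|x\|^2 \qquad \text{for all } x\in H_n \text{ and } 1\le i\le R.$$

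Encode the $n$th partition as $f_n:\N\to\{1,\ldots,R\}$ via $f_n(j)=i$ iff $j\in J_i^{(n)}$. The product space $\{1,\ldots,R\}^\N$ is metrizable and compact, so a diagonal extraction produces a subsequence $(n_k)$ and $f:\N\to\{1,\ldots,R\}$ with $f_{n_k}(j)\to f(j)$ for every $j$. Set $J_i=f^{-1}(i)$, giving a finite partition $\N=J_1\sqcup\cdots\sqcup J_R$.

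To finish I would verify that each $(x_j)_{j\in J_i}$ is a frame of $H$ with bounds $A,B$ by checking on the dense subspace $\bigcup_m H_m$ and extending by continuity. For the upper bound, fix $x\in H_m$ and $N\in\N$; when $k$ is large enough that $n_k\ge m$ and $f_{n_k}$ agrees with $f$ on $\{1,\ldots,N\}$, the truncated sum satisfies $\sum_{j\in J_i,\,j\le N}|\langle x,x_j\rangle|^2=\sum_{j\in J_i^{(n_k)},\,j\le N}|\langle x,x_j\rangle|^2\le B\|x\|^2$, and letting $N\to\infty$ gives the bound. For the lower bound, dominated convergence with the summable dominant $|\langle x,x_j\rangle|^2$ (total $K\|x\|^2$) yields $\sum_{j\in J_i^{(n_k)}}|\langle x,x_j\rangle|^2\to\sum_{j\in J_i}|\langle x,x_j\rangle|^2$; since the left-hand side is at least $A\|x\|^2$ once $n_k\ge m$, so is the limit. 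The upper bound makes the form $x\mapsto\sum_{j\in J_i}|\langle x,x_j\rangle|^2$ continuous on $H$, so both inequalities extend from $\bigcup_m H_m$ to all of $H$. The main delicate point is the quantitative claim that the number of pieces $R$ can be chosen independently of the dimension; once this is extracted from the proof of Theorem \ref{T:reduction1}, the remainder is a routine compactness and dominated-convergence argument.
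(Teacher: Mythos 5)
Your proposal is correct in outline and follows essentially the same strategy as the paper: reduce to finite dimensions, get a bound on the number of pieces that is uniform in $n$, stabilize the partitions by a compactness/diagonal argument (what the paper calls the pinball principle), and pass the frame bounds to the limit on a dense subspace. The limiting step in your write-up (truncation for the upper bound, dominated convergence for the lower bound, extension by continuity) is fine and is, if anything, more carefully spelled out than in the paper. There are two points where you diverge. First, the uniform bound on the number of pieces: you propose to extract $R(K)$ from the recursion depth in the proof of Theorem \ref{T:reduction1}, which does work, but the paper gets it for free from the statement alone --- each piece has lower frame bound $A$ and the pieces sum to the $K$-tight frame, so there are at most $K/A$ nonempty pieces. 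That observation spares you from opening up the proof. Second, and this is the one step of yours that needs a patch: you apply Theorem \ref{T:reduction1} to the \emph{infinite} family $(P_n x_j)_{j\in\N}$ in $H_n$. The theorem's statement does not exclude infinite tight frames of a finite-dimensional space, but its proof rests on Theorem \ref{MSS}, which is stated for finite sequences $(u_i)_{i=1}^M$, so as proved it only covers finite frames. The paper sidesteps this by working with the finite set $(f_j)_{j=1}^n$ inside $H_n=\mathrm{span}_{1\le j\le n} f_j$ and adjoining finitely many auxiliary vectors $(g_{j,n})_{j\in I_n}$ to restore $K$-tightness before partitioning; the auxiliary vectors' contribution is then shown to vanish in the limit. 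Your argument can be repaired the same way, or by truncating $(P_n x_j)_j$ and absorbing the small Bessel tail into one of the pieces, but as written this application is not justified by what the paper actually proves.
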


\begin{proof}
Let $(f_j)_{j=1}^\infty$ be a tight frame of vectors in the unit ball of $H$ with frame bound $K>1$.   For each $n\in\N$ let $H_n=\textrm{span}_{1\leq j\leq n}f_j$ and let 
 $(g_{j,n})_{j\in I_n}$ be a finite collection of vectors in the ball of $H_n$ so that $(f_j)_{j=1}^n\cup (g_{j,n})_{j\in I_n}$ is a $K$-tight frame for $H_n$.
By Theorem \ref{T:reduction1} we have that $(f_j)_{j=1}^n\cup (g_{j,n})_{j\in I_n}$ may be partitioned into a collection of frames  $((f_j)_{j\in J_{i,n}}\cup (g_{j,n})_{j\in I_{i,n}})_{i=1}^{M_n}$ of $H_n$ each with lower frame bound $A$ and upper frame bound $B$.   We first obtain an upper bound on $M_n$.  For each $1\leq i\leq M_n$ we have that $(f_j)_{j\in J_{i,n}}\cup (g_{j,n})_{j\in I_{i,n}}$ has lower frame bound $A$ and that the entire collection of vectors $(f_j)_{j=1}^n\cup (g_{j,n})_{j\in I_n}$ has frame bound $K$.  Thus, we have that $AM_n\leq K$.  We let $M=\lfloor K/A\rfloor$.  Thus for each $n\in\N$ we may consider the partitioning to be of the form $((f_j)_{j\in J_{i,n}}\cup (g_{j,n})_{j\in I_{i,n}})_{i=1}^{M}$ where we allow for sets to be empty.

For a given $j\in\N$, we can have that the index $1\leq i\leq M$ so that $j\in J_{i,n}$ can change depending on $n$.  However, this can be stabilized by passing to a subsequence, which is what Pete Casazza refers to as the pinball principle.
We choose a subsequence $(k_n)_{n\in\N}$ of $\N$ so that for all $j\in\N$ there exists $1\leq m_j\leq M$ so that $j\in J_{m_j,k_n}$ for all $1\leq j\leq n$.  For each $1\leq i\leq M$, we let $J_i=\liminf_{n\rightarrow\infty} J_{i,k_n}$.  This gives that $(J_i)_{1\leq i\leq M}$ is a partition of $\N$.  We have that $(f_j)_{j\in J_i}$ has Bessel bound $B$ as $(f_j)_{j\in J_{i,n}}$ has Bessel bound $B$ for all $n\in\N$.
We now prove that if $J_i\not= \emptyset$ then $(f_j)_{j\in J_i}$ is a frame of $H$ with lower frame bound $A$.  
Let $x\in H$. We have that $\lim_{n\rightarrow\infty}\sum_{1\leq j\leq k_n} |\langle f_j,x \rangle|^2 =K\|x\|^2$.  Hence, $\lim_{n\rightarrow\infty}\sum_{j\in I_{k_n}} |\langle g_{j,k_n},x \rangle|^2=0$ as $(f_j)_{j=1}^n\cup (g_{j,n})_{j\in I_n}$ has frame bound $K$.  Thus for all $1\leq i\leq M$ we have that 
\begin{align*}
\sum_{j\in J_i}|\langle f_j,x\rangle|^2&=\lim_{n\rightarrow\infty} \sum_{j\in J_{i,k_n}}|\langle f_j,x\rangle|^2\\
&=\lim_{n\rightarrow\infty} \sum_{j\in J_{i,k_n}}|\langle f_j,x\rangle|^2+ \sum_{j\in I_{i,k_n}}|\langle g_{j,k_n},x\rangle|^2\\
&\geq A\|x\|^2
\end{align*}
Thus $(f_j)_{j\in J_i}$ has lower frame bound $A$.
\end{proof}

We note here that it is not possible to improve Theorem \ref{T:reduction} to show that every FUNTF with sufficiently many vectors contains a good \emph{basis}. A {\em FUNTF } (or  finite unit norm tight frame) is 
a finite collection of unit vectors which form a tight frame.  If $k\geq n$ are natural numbers then there always exists a FUNTF of $k$ vectors for an $n$-dimensional Hilbert space \cite{BF}, and FUNTFs are particularly  useful in application due to their resilience to error \cite{GKK}.

\begin{thm} For every $\epsilon > 0$ and every $B > 1$, there exists an $M > B$ and a FUNTF $(x_i)_{1\leq i \leq M}$ such that whenever $I\subset [1,M]$ is such that $(x_i)_{i\in I}$ is a basis, then the lower Riesz constant of $(x_i)_{i\in I}$ is less than $\epsilon$.
\end{thm}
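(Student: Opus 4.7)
The plan is to exhibit the regular simplex as the required FUNTF. Given $\epsilon>0$ and $B>1$, I will choose an integer $n$ with $n>\max(B-1,1/\epsilon)$, set $M:=n+1>B$, and work in the $n$-dimensional Hilbert space $H=\mathbf{1}^{\perp}\subset\R^{n+1}$. I will take
\[
x_i\;=\;\sqrt{\tfrac{n+1}{n}}\Bigl(e_i-\tfrac{1}{n+1}\sum_{j=0}^{n}e_j\Bigr),\qquad i=0,1,\dots,n,
\]
where $\{e_j\}_{j=0}^{n}$ is the standard basis of $\R^{n+1}$. A direct calculation gives $\|x_i\|=1$, $\langle x_i,x_j\rangle=-1/n$ for $i\neq j$, and that the frame operator restricted to $H$ equals $\tfrac{n+1}{n}\,\Id_{H}$, so $(x_i)_{i=0}^{n}$ is a FUNTF of $H$ with $M=n+1>B$ vectors.

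Next I will identify the bases. Expanding the definition shows $\sum_{i=0}^{n}x_i=0$ and that, up to a scalar, this is the only linear relation among the $x_i$; in particular every $n$-element subset is linearly independent, and these are exactly the subsets $I$ for which $(x_i)_{i\in I}$ is a basis of $H$. By the full permutation symmetry of the simplex, the Gram matrix $G_I=(\langle x_i,x_j\rangle)_{i,j\in I}$ does not depend on $I$ and equals
\[
G\;=\;\tfrac{n+1}{n}\,I_n\,-\,\tfrac{1}{n}\,J_n,
\]
where $J_n$ is the $n\times n$ all-ones matrix. Since $J_n$ has eigenvalues $n$ (simple) and $0$ (with multiplicity $n-1$), the eigenvalues of $G$ are $1/n$ (simple) and $(n+1)/n$ (multiplicity $n-1$). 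Hence the lower Riesz bound of every basis $(x_i)_{i\in I}$ equals $1/n$, which is smaller than $\epsilon$ by the choice of $n$.

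There is no essential obstacle: the symmetry of the simplex reduces the verification to a single $n\times n$ eigenvalue computation, and the underlying point is simply that the unique small eigenvalue $1/n$ of the common Gramian $G$ tends to zero as the ambient dimension grows, while the number of frame vectors $M=n+1$ can be taken arbitrarily large simultaneously. The only mildly subtle step is the observation that $\sum_i x_i=0$ is the \emph{unique} linear dependence among the simplex vertices (up to scaling), which is verified by a direct computation in coordinates on $\R^{n+1}$ and which guarantees that every $n$-subset is a genuine basis of $H$ rather than being linearly dependent.
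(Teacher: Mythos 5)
Your proof is correct, but it proceeds by a genuinely different and more elementary construction than the paper's. The paper adapts a Hadamard-matrix construction of Casazza, Fickus, Mixon and Tremain: it builds a FUNTF of $2^{n+1}$ vectors in a $2^n$-dimensional space (so with redundancy exactly $2$) and runs a three-case analysis on how a candidate basis meets the two halves of the frame, showing that either the subset fails to span or it contains two vectors with inner product of modulus $\frac{2^{n-1}-1}{2^{n-1}+1}\rightarrow 1$, which kills the lower Riesz bound; it then concatenates copies of the frame to force $M>B$. You instead use the regular simplex: $n+1$ unit vectors in an $n$-dimensional space with pairwise inner products $-1/n$. Your computations all check out --- the kernel of $(c_i)\mapsto\sum_i c_i x_i$ is spanned by the all-ones vector, so the bases are exactly the $n$-element subsets, each with the same Gram matrix $\frac{n+1}{n}I_n-\frac{1}{n}J_n$ and hence lower Riesz bound $\lambda_{\min}=1/n<\epsilon$, while $M=n+1>B$ by your choice of $n$. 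Since the statement places no constraint beyond $M>B$, this is a complete proof. The one thing the paper's example delivers that yours does not directly is a counterexample whose tight frame bound stays bounded away from $1$ (your simplex has frame bound $(n+1)/n\rightarrow 1$, i.e., it is only barely redundant), which is arguably the more pointed obstruction in light of Theorem \ref{T:reduction}; but your example upgrades in exactly the same way the paper's does --- repeating each simplex vertex $k$ times gives redundancy $k(n+1)/n$, while every basis must still consist of $n$ distinct vertices and so still has lower Riesz bound $1/n$. In exchange, your argument replaces the Hadamard construction and the case analysis with a single explicit eigenvalue computation.
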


\begin{proof} We modify slightly the construction of Casazza, Fickus, Mixon and Tremain from Proposition 3.1 in \cite{CFMT}. Let $H_n$  be the $2^n \times 2^n$ Hadamard matrix obtained via tensor products of
\[
\begin{pmatrix}
1&1\\1&-1
\end{pmatrix}.
\]
Let $F_n^1$ be the matrix obtained by multiplying the first $2^{n-1}-1$ columns of $H_n$ by $\sqrt{1/2^{n-1}}$, and the remaining $2^{n-1}+1$ columns by $\sqrt \frac 1{2^{n-1}(2^{n-1}+1)}$. Let $F_n^2$ be the matrix obtained by multiplying the first $2^{n-1} - 1$ columns of $H_n$ by 0 and the remaining $2^{n-1}+1$ columns by $\sqrt \frac 1{2^{n-1}+1}$.
Let $F_n$ be the $2^{n+1}\times 2^n$ matrix obtained by ``stacking" the $F_n^1$ on top of $F_n^2$.  Note that the $2^n$ columns of $F_n$ are orthogonal, the rows have norm one, and the columns have norm-squared 2. We denote the $j$th row of $H_n$ by $h_j$, the $j$th row of $F_n$ by $x_j$, and we note that $\{g_j = 2^{-n/2} h_j: 1\le j \le 2n\}$ is an orthonormal basis.

Let $I\subset [1,2^{n+1}]$ be of size $2^n$. We show that $(x_i)_{i \in I}$ cannot have good lower Riesz bound.  Case 1: If $|I \cap [1,2^n] | > 2^{n-1} - 1$, then $(x_i)_{i\in I}$ has lower Riesz bound less than $\frac{2}{2^{n-1} +1}$. Indeed, let $J = I\cap[ [1,2^n]$ and let $P$ denote the orthogonal projection onto the first $2^{n-1} -1$ coordinates. Choose scalars $(c_j)$ such that 
\[
\sum_{j\in J} |c_j|^2 = 1
\]
and
\[
P\bigl(\sum_{j\in J} c_j x_j \bigr) = 0.
\]
We then have,
\begin{align*}
\| \sum_{j\in J} c_j x_j\|^2 &= \|\sum_{j\in J} (Id - P)  c_j x_j \|^2 \\
  &=  \frac{2}{2^{n-1} + 1} \|(Id - P) \sum _{j\in J} c_j g_j\|^2 \\
  &\le  \frac{2}{2^{n-1} + 1} \| \sum _{j\in J} c_j g_j\|^2 \\
  &= \frac{2}{2^{n-1} + 1}.
\end{align*}
Thus the lower Riesz bound of $(x_j)_{j\in J}$ (and hence of $(x_j)_{j\in I}$ ) is no more than $\frac{2}{2^{n-1} + 1}$. 

Case 2: If $|I \cap [1,2^n] | < 2^{n-1} - 1$, then $(x_i)_{i\in I}$ cannot be a basis, since the projection onto the first $2^{n-1} - 1$ coordinates will not have $2^{n-1} - 1$ non-zero vectors, so it will not span.

It remains to examine what happens in Case 3: $|I\cap [1,2^n]| = 2^{n - 1} - 1$. In this case, $|I\cap [2^{n}+1,2^{n+1}]| =2^{n - 1} + 1$. In particular, there exist $x_j$ and $x_k$ such that the last $2^{n-1}$ coordinates are constant multiples of one another. So, $|\langle x_j,x_k\rangle| = \frac {2^{n-1}-1}{2^{n-1}+1}$. In particular, the Riesz constant of just these two (norm-one) vectors goes to 0 as $n\to \infty$. Therefore, we can choose $n$ such that the lower Riesz constant of any basis is less than $\epsilon$.

Finally, to finish the proof, we simply copy the FUNTF constructed above as many times as necessary to create a large enough FUNTF. Any basis contained in the copy will also be a basis in the original, so we can force the lower Riesz bound to be less than $\epsilon$.
\end{proof}

\section{Lemmas}\label{S:lem}

In this section we collect some lemmas on frames which will be necessary for solving the discretization problem in Section \ref{S:disc}.  The lemmas on continuous frames that we need will be presented in Section \ref{S:disc}.

{
\begin{lem}\label{L:BesselPart}
Let $(f_{j})_{j\in J}$ be a C-Bessel sequence in an $N$-dimensional Hilbert space $H$.  Let $M\in\N$ and $J_1,..., J_M$ be a partition of $J$.  For each $1\leq K< M$, there exists $1\leq n_1<...<n_K\leq M$ so that $(f_{j})_{j\in J_{n_k}}$ is $C N /(M+1-K)$-Bessel for each $1\leq k\leq K$.
\end{lem}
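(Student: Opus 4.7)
The plan is to reduce each Bessel constant to a trace computation and then apply a sorting/pigeonhole argument. The key observation is that for each $1 \leq n \leq M$, if $S_n$ denotes the frame operator of $(f_j)_{j \in J_n}$, then the optimal Bessel constant of this subsequence equals $\|S_n\|$. Since $H$ is $N$-dimensional and $S_n$ is positive self-adjoint, $\|S_n\| \leq \mathrm{tr}(S_n)$. Thus it suffices to produce $K$ indices with $\mathrm{tr}(S_{n_k}) \leq CN/(M+1-K)$.

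Next I would estimate these traces via the partition. Because the $J_n$ partition $J$, additivity of frame operators gives $\sum_{n=1}^M S_n = S$, the full frame operator. The $C$-Bessel hypothesis says $\|S\| \leq C$, so $\mathrm{tr}(S) \leq CN$, and additivity of the trace yields
\[
\sum_{n=1}^M \mathrm{tr}(S_n) \leq CN.
\]

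The final step is a pigeonhole on the sorted traces: choose a permutation $\sigma$ of $\{1,\dots,M\}$ with $\mathrm{tr}(S_{\sigma(1)}) \leq \cdots \leq \mathrm{tr}(S_{\sigma(M)})$. Since the tail $\mathrm{tr}(S_{\sigma(K)}), \ldots, \mathrm{tr}(S_{\sigma(M)})$ has $M+1-K$ terms each at least $\mathrm{tr}(S_{\sigma(K)})$,
\[
(M+1-K)\,\mathrm{tr}(S_{\sigma(K)}) \leq \sum_{k=K}^M \mathrm{tr}(S_{\sigma(k)}) \leq CN,
\]
so $\mathrm{tr}(S_{\sigma(k)}) \leq CN/(M+1-K)$ for every $1 \leq k \leq K$. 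Relabeling $\{\sigma(1),\dots,\sigma(K)\}$ in increasing order as $n_1 < \cdots < n_K$ then gives the claim.

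I do not foresee any real obstacle. The argument rests on two elementary facts: trace dominates operator norm for positive operators in finite dimension, and the $K$-th smallest of $M$ nonnegative numbers summing to $CN$ is at most $CN/(M+1-K)$. The hypothesis that $H$ is $N$-dimensional enters precisely at the trace-versus-norm step, which explains the factor of $N$ in the stated bound.
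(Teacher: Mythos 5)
Your proposal is correct and follows essentially the same route as the paper: both reduce the Bessel bound of each $(f_j)_{j\in J_n}$ to the trace of its frame operator, use additivity of the trace together with $\mathrm{tr}(T_J)\leq CN$ from the $C$-Bessel hypothesis, and finish with the same pigeonhole on the $K$ smallest traces. The only cosmetic difference is that the paper phrases the trace bound via the sum of eigenvalues of $T_{J_n}$ rather than the inequality $\|S_n\|\leq \mathrm{tr}(S_n)$, which is the same fact.
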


\begin{proof}
For a set $I\subseteq J$ we let $T_I$ be the frame operator of $(f_{j})_{j\in I}$.  For any choice of an orthonormal basis $(e_i)_{i=1}^N$ we have that
\begin{equation}\label{E:eigen}
trace(T_I)=\sum_{i=1}^N \sum_{j\in I} |\langle e_i, f_j\rangle|^2 = \sum_{j\in I} \|f_j\|^2.
\end{equation}
As $T_I$ is a positive self-adjoint operator, we may choose $(e_i)_{i=1}^N$ to be an orthonormal basis of eigenvectors of $T_I$, and hence \eqref{E:eigen} gives that  $\sum_{j\in I} \|f_j\|^2$ is equal to the sum of the eigenvalues of $T_I$.  For $J=I$, we have that the sum of the eigenvalues of $T_J$ is  $\sum_{i=1}^N \sum_{j\in J} |\langle e_i, f_j\rangle|^2\leq CN$ as $(f_j)_{j\in J}$ is $C$-Bessel.  Thus we may choose $K$ different $n\in \N$ so that the sum of the eigenvalues of $T_{J_n}$ is at most $CN/(M+1-K)$.  In particular, each of the eigenvalues of $T_{J_n}$ is at most $CN/(M+1-K)$ and $(f_j)_{j\in J_n}$ is $CN/(M+1-K)$ Bessel.
\end{proof}

\begin{cor}\label{C:BesselPart}
Let $P\in \N$. For $1\leq p\leq P$, let  $(f_{p,j})_{j\in J^p}$ be a $C_p$-Bessel sequence in an $N_p$-dimensional Hilbert space.  Let $M\in\N$ and $J_1^p,..., J_M^p$ be a partition of $J^p$. Then there exists $1\leq n\leq M$ such that the sequence $(f_{p,j})_{j\in J_{n}^p}$ is $4^{p} C_p N_p /M$-Bessel for each $1\leq p\leq P$.
\end{cor}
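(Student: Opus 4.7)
The strategy is to apply Lemma \ref{L:BesselPart} separately for each index $p$ to produce a large set of partition indices $n$ at which the desired Bessel bound holds for the $p$-th system, and then to use a pigeonhole-type counting argument to show that a single index $n \in \{1,\ldots,M\}$ can be chosen which is simultaneously good for every $p$. The role of the weight $4^p$ is precisely to make the sets of ``bad'' indices shrink geometrically fast in $p$, so that their total size is strictly less than $M$.

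For each fixed $p$ with $M > 4^p$, I would apply Lemma \ref{L:BesselPart} with the choice $K_p = M + 1 - \lceil M/4^p \rceil$. This lies in the admissible range $1 \le K_p < M$ and produces at least $K_p$ indices $n \in \{1,\ldots,M\}$ at which $(f_{p,j})_{j \in J_n^p}$ is $C_p N_p / \lceil M/4^p \rceil$-Bessel, and hence $4^p C_p N_p / M$-Bessel. Writing $G_p$ for the set of such good indices, we obtain
$$|\{1,\ldots,M\} \setminus G_p| \;\le\; M - K_p \;=\; \lceil M/4^p\rceil - 1 \;<\; M/4^p.$$
For values of $p$ with $M \le 4^p$, the target bound $4^p C_p N_p / M$ already exceeds $C_p$, so the hypothesis that $(f_{p,j})_{j \in J^p}$ is $C_p$-Bessel trivially passes to every subset and we may simply take $G_p = \{1,\ldots,M\}$.

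Summing the bounds on the bad sets over $p$ yields
$$\sum_{p=1}^{P} |\{1,\ldots,M\} \setminus G_p| \;<\; M \sum_{p=1}^{\infty} 4^{-p} \;=\; \frac{M}{3} \;<\; M,$$
so the union of the bad index sets cannot cover $\{1,\ldots,M\}$. Any $n$ in the intersection $\bigcap_{p=1}^{P} G_p$ then satisfies the conclusion of the corollary simultaneously for every $1 \le p \le P$.

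The only real obstacle is arranging the arithmetic so that Lemma \ref{L:BesselPart} produces a good-index count large enough to compensate across all $p$ at once. The choice of weight $4^p$ is dictated purely by this requirement: any geometric weight $\rho^p$ with $\sum_{p\ge 1}\rho^{-p} < 1$ would work, and $\rho=4$ is a convenient clean value that makes the series sum to $1/3$.
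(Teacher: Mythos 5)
Your proof is correct, and it rests on the same key ingredient as the paper's argument, namely Lemma \ref{L:BesselPart} used with $K>1$ so as to produce many good indices rather than just one; but the way you combine the information across $p$ is genuinely different. The paper proceeds by nested halving: it first extracts roughly $M/2$ indices that are good for $p=1$, then reapplies Lemma \ref{L:BesselPart} to the sub-partition indexed by those survivors to extract half of them good for $p=2$, and so on, ending with a nonempty set after $P$ halvings provided $2^{P+1}<M$ (the smaller values of $M$ being handled trivially); the factor $4^p$ arises there as the product of the loss from halving at each stage and the loss $2^p$ coming from $M_p\geq M/2^p$. You instead apply the lemma once per $p$ to the original partition, with $K_p=M+1-\lceil M/4^p\rceil$, and observe that the complements of the good sets have total cardinality less than $M\sum_{p\geq 1}4^{-p}=M/3<M$, so they cannot cover $\{1,\dots,M\}$. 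Your union-bound version is arguably cleaner: it avoids having to observe that a sub-collection of the partition is still a partition of a Bessel subset, it folds the degenerate cases $M\leq 4^p$ into the same counting, and it makes transparent that $4$ could be replaced by any $\rho>2$. Both routes yield exactly the stated constant, so either proof would serve.
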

\begin{proof}
Without loss of generality we may assume that $2^{P+1}<M$.  Indeed, for $1\leq p\leq P$, if $M\leq 2^{p+1}$ then $(f_{p,j})_{j\in J_{n}^p}$ is automatically $4^{p} C_p N_p /M$-Bessel for all $n$.

We let $M_1=M$  if $M$ is even, and we let $M_1=M-1$ if $M$ is odd.  By Lemma \ref{L:BesselPart} we may  choose  $\{n_1,...,n_{M_1/2}\}$ such that $(f_{1,j})_{j\in J_{n_i}^1}$ is $2CN/M_1$-Bessel for all $1\leq i\leq M_1/2$.  Continuing, we let $M_2=M_1/2$ if $M_1/2$ is even, and we let $M_2=M_1/2-1$ if $M_1/2$ is odd.  We can then choose half of the set $\{n_1,...,n_{M_1/2}\}$ (without loss of generality, it is the first half) such that $(f_{2,j})_{j\in J_{n_i}^2}$ is $2^2CN/M_2$-Bessel for all $1\leq i\leq M_2/2$.  We then let 
$M_3=M_2/2$ if $M_2/2$ is even, and we let $M_3=M_2/2-1$ if $M_2/2$ is odd.  Without loss of generality, we have that $(f_{3,j})_{j\in J_{n_i}^3}$ is $2^3CN/M_3$-Bessel for all $1\leq i\leq M_3/2$.  Continuing in this manner we get even natural numbers $M_1,...,M_P$ with $2^{p-1} M_p\leq M\leq 2^{p}M_p$ for each $1\leq p\leq P$ such that 
$(f_{p,j})_{j\in J_{n_i}^p}$ is $2^pCN/M_p$-Bessel for all $1\leq i\leq M_p/2$, and hence 
$(f_{p,j})_{j\in J_{n_i}^p}$ is $4^pCN/M$-Bessel for all $1\leq i\leq M_p/2$.  Thus, we may take the sets $J_{n_1}^p$ as $2^{P+1}<M$ and $2^{P-1} M_P\leq M\leq 2^{P}M_P$ implies that $1\leq M_P$.
\end{proof}
}

We give a quick example showing that the estimate in Lemma \ref{L:BesselPart} is sharp for $K=1$.  Let $C>0$ and let $N,M\in \Z$ such that $N$ divides $M$.  Let $(e_i)_{i=1}^N$ be an orthonormal basis of an $N$-dimensional Hilbert space $H$.  Consider the $C$-tight frame which consists of $M/N$ copies of $\sqrt{\frac{CN}{M}}e_i$ for each $1\leq i\leq N$.  As there are $N$ choices for $1\leq i\leq N$, we have that the frame has $M$ vectors.  Thus if we partition the frame into singletons we have that each singleton is $\frac{CN}{M}$-Bessel.

The following lemma is obvious for Hilbert spaces, but we state it separately because it will be important in obtaining the lower frame bound in Theorem \ref{T:samp}.  

\begin{lem}\label{L:block}
Let $\vp>0$ and $N\in\N$ with $N\geq \vp^{-2} $.  Let $(H_j)_{j=1}^N$ be a sequence of finite dimensional mutually orthogonal spaces of a Hilbert space $H$.   Then for every $x\in H$ there exists $1\leq n\leq N$ such that $\|P_{H_n} x\|\leq \vp\|x\|$ where $P_{H_n}$ is orthogonal projection onto $H_n$.
Furthermore, if $N$ is even and  $N\geq 2\vp^{-2}$ then for every $x\in H$ there exists $1\leq n<N$ such that $\|P_{H_n\oplus H_{n+1}} x\|\leq \vp\|x\|$

\end{lem}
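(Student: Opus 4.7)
The plan is to exploit orthogonality: because the subspaces $H_1,\dots,H_N$ are mutually orthogonal inside $H$, the projections $P_{H_j}x$ lie in orthogonal subspaces for every fixed $x$, so Pythagoras gives
\[
\sum_{j=1}^{N}\|P_{H_j}x\|^2 \;=\; \Bigl\|\sum_{j=1}^{N}P_{H_j}x\Bigr\|^2 \;=\; \|P_{H_1\oplus\cdots\oplus H_N}x\|^2 \;\leq\; \|x\|^2.
\]
Since this is a sum of $N$ non-negative numbers bounded by $\|x\|^2$, the minimum of $\|P_{H_j}x\|^2$ over $1\le j\le N$ is at most $\|x\|^2/N$. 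Under the hypothesis $N\ge \vp^{-2}$ this yields an index $n$ with $\|P_{H_n}x\|^2 \le \vp^{2}\|x\|^2$, proving the first assertion after taking square roots.

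For the second assertion the plan is to pair consecutive subspaces. Assume $N$ is even and $N\ge 2\vp^{-2}$. For $1\le k\le N/2$ set $K_k := H_{2k-1}\oplus H_{2k}$; these $N/2$ subspaces are still mutually orthogonal since the original family is. Applying Pythagoras to $(K_k)_{k=1}^{N/2}$ yields
\[
\sum_{k=1}^{N/2}\|P_{K_k}x\|^2 \;\le\; \|x\|^2,
\]
so some $k_0$ satisfies $\|P_{K_{k_0}}x\|^2 \le 2\|x\|^2/N \le \vp^{2}\|x\|^2$. Choosing $n := 2k_0-1$ gives $1\le n<N$ and $P_{H_n\oplus H_{n+1}}=P_{K_{k_0}}$, which is exactly the conclusion.

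There is no real obstacle here: the argument is a direct application of the Pythagorean identity for orthogonal projections, first to the original family and then to the family obtained by grouping successive pairs. The only point that requires a moment of care is verifying that consecutive pairs $H_{2k-1}\oplus H_{2k}$ remain mutually orthogonal, which is immediate from the mutual orthogonality of the original $H_j$'s.
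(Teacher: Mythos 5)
Your proof is correct and follows essentially the same route as the paper: both rest on the Pythagorean bound $\sum_j\|P_{H_j}x\|^2\le\|x\|^2$ (you phrase it as a direct pigeonhole on the minimum, the paper as a contradiction, which is the same argument), and both handle the furthermore clause by grouping consecutive pairs $H_{2k-1}\oplus H_{2k}$ and reapplying the first part. No gaps.
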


\begin{proof}
For the sake of contradiction we assume that there exists $x\in H$ with $\|P_{H_j} x\|>\vp\|x\|$ for all $1\leq j\leq N$.  This gives the following contradiction,
$$\|x\|\geq (\sum_{j=1}^N \|P_{H_j} x\|^2)^{1/2}>  (\sum_{j=1}^N \vp^2\|x\|^2)^{1/2}
=N^{1/2}\vp\|x\|\geq \|x\|$$.

For the furthermore case, we let $H^0_n=H_{2n-1}\oplus H_{2n}$ for all $n\in \N$ and then apply the previous case to $N/2\in \N$ and $(H^0_{j})_{j=1}^{N/2}$.
\end{proof}

\begin{lem}\label{L:project}
Let $H_0$ and $H_1$ be Hilbert spaces and let $(f_j)_{j\in J}\subset H_0\oplus H_1$.  Suppose that $(P_{H_1} f_j)_{j\in J}\subseteq H_1$ has upper frame bound $K$ and lower frame bound $k$ and that $(P_{H_0} f_j)_{j\in J}$ has Bessel bound $c$.  Then for all $x\in H_0\oplus H_1$,
$$\sum_{j\in J} |\langle f_j, x\rangle|^2 \leq K\|P_{H_1}x\|^2+c\|P_{H_0}x\|^2+2 K^{1/2}c^{1/2}\|P_{H_1}x\|\|P_{H_0}x\|$$
and
\begin{align*}
\sum_{j\in J} |\langle f_j, x\rangle|^2 &\geq \left(\sum_{j\in J} |\langle f_j, P_{H_1}x\rangle|^2\right) - 2 K^{1/2}c^{1/2}\|P_{H_1}x\|\|P_{H_0}x\| \\
 &\ge k\|P_{H_1}x\|^2-2 K^{1/2}c^{1/2}\|P_{H_1}x\|\|P_{H_0}x\|.
 \end{align*}
\end{lem}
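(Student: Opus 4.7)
The plan is to decompose the inner products along the orthogonal decomposition $H_0 \oplus H_1$ and then apply Cauchy--Schwarz to the resulting cross term. Specifically, for any $x \in H_0 \oplus H_1$ write $x = P_{H_0} x + P_{H_1} x$, and similarly $f_j = P_{H_0} f_j + P_{H_1} f_j$. Since $H_0 \perp H_1$, the mixed terms $\langle P_{H_0} f_j, P_{H_1} x\rangle$ and $\langle P_{H_1} f_j, P_{H_0} x\rangle$ vanish, giving
\[
\langle f_j, x\rangle = \langle P_{H_0} f_j, P_{H_0} x\rangle + \langle P_{H_1} f_j, P_{H_1} x\rangle.
\]

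Expanding $|\langle f_j, x\rangle|^2$ and summing over $j \in J$, I would obtain
\[
\sum_{j \in J} |\langle f_j, x\rangle|^2 = \sum_{j \in J}|\langle P_{H_0} f_j, P_{H_0} x\rangle|^2 + \sum_{j \in J}|\langle P_{H_1} f_j, P_{H_1} x\rangle|^2 + 2\,\mathrm{Re}\!\sum_{j \in J} \overline{\langle P_{H_0} f_j, P_{H_0} x\rangle}\langle P_{H_1} f_j, P_{H_1} x\rangle.
\]
The first two sums are controlled by the Bessel bound $c$ and the frame bounds $k, K$ respectively. The cross term is handled by the Cauchy--Schwarz inequality in $\ell_2(J)$:
\[
\left| \sum_{j \in J} \overline{\langle P_{H_0} f_j, P_{H_0} x\rangle}\langle P_{H_1} f_j, P_{H_1} x\rangle \right| \leq c^{1/2}\|P_{H_0} x\|\cdot K^{1/2}\|P_{H_1} x\|.
\]

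For the upper bound, bound each of the three terms from above directly using the two Bessel/frame bounds and the Cauchy--Schwarz estimate. For the lower bound, discard the (nonnegative) $H_0$-part of the sum and bound the cross term from below by the negative of its absolute value, yielding
\[
\sum_{j \in J} |\langle f_j, x\rangle|^2 \geq \sum_{j \in J} |\langle P_{H_1} f_j, P_{H_1} x\rangle|^2 - 2K^{1/2}c^{1/2}\|P_{H_1} x\|\|P_{H_0} x\|.
\]
To match the stated form, observe that $\langle f_j, P_{H_1} x\rangle = \langle P_{H_1} f_j, P_{H_1} x\rangle$ (again by orthogonality of $H_0$ and $H_1$), so the retained sum equals $\sum_j |\langle f_j, P_{H_1} x\rangle|^2$, which in turn is at least $k \|P_{H_1} x\|^2$ by the lower frame bound hypothesis on $(P_{H_1} f_j)_{j\in J}$.

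There is no real obstacle here; the only point requiring care is tracking which inner products survive under the orthogonal decomposition, and making sure the cross term is the one estimated by Cauchy--Schwarz rather than bounded separately by each frame inequality (which would give a weaker constant). The assertion that $\langle f_j, P_{H_1} x\rangle = \langle P_{H_1} f_j, P_{H_1} x\rangle$ is the small identity that connects the two forms of the lower bound.
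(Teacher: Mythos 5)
Your proposal is correct and follows essentially the same route as the paper's proof: expand $|\langle f_j,x\rangle|^2$ along the orthogonal decomposition $x=P_{H_0}x+P_{H_1}x$, use the identity $\langle f_j,P_{H_i}x\rangle=\langle P_{H_i}f_j,P_{H_i}x\rangle$, and control the cross term by Cauchy--Schwarz in $\ell_2(J)$ together with the Bessel and frame bounds. The only difference is the cosmetic one of whether the projection identity is applied before or after expanding the square.
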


\begin{proof}
We first calculate the upper bound.
\begin{align*}\sum_{j\in J}& |\langle f_j, x\rangle|^2 = \sum_{j\in J} |\langle f_j, P_{H_1}x+P_{H_0}\rangle|^2 \\
 &\leq \sum_{j\in J} |\langle f_j, P_{H_1}x\rangle|^2+|\langle f_j,P_{H_0}x\rangle|^2+2|\langle f_j, P_{H_0}x\rangle\langle f_j, P_{H_1}x\rangle| \\
 &\leq \left(\sum_{j\in J} |\langle f_j, P_{H_1}x\rangle|^2+|\langle f_j,P_{H_0}x\rangle|^2\right)+2\left(\sum_{j\in J}|\langle f_j, P_{H_0}x\rangle|^2\right)^{1/2}\left(\sum_{j\in J}|\langle f_j, P_{H_1}x\rangle|^2\right)^{1/2} \\
&= \left(\sum_{j\in J} |\langle P_{H_1}f_j, P_{H_1}x\rangle|^2+|\langle P_{H_0}f_j,P_{H_0}x\rangle|^2\right)+2\left(\sum_{j\in J}|\langle P_{H_0} f_j, P_{H_0}x\rangle|^2\right)^{1/2}\left(\sum_{j\in J}|\langle P_{H_1}f_j, P_{H_1}x\rangle|^2\right)^{1/2} \\
&\leq  K\|P_{H_1}x\|^2+c\|P_{H_0}x\|^2+2 K^{1/2}c^{1/2}\|P_{H_1}x\|\|P_{H_0}x\|
\end{align*}
We now calculate the lower bound.
\begin{align*}\sum_{j\in J}& |\langle f_j, x\rangle|^2 = \sum_{j\in J} |\langle f_j, P_{H_1}x+P_{H_0}\rangle|^2 \\
 &\geq \sum_{j\in J} |\langle f_j, P_{H_1}x\rangle|^2+|\langle f_j,P_{H_0}x\rangle|^2-2|\langle f_j, P_{H_0}x\rangle\langle f_j, P_{H_1}x\rangle| \\
 &\geq \left(\sum_{j\in J} |\langle f_j, P_{H_1}x\rangle|^2\right)-2\left(\sum_{j\in J}|\langle f_j, P_{H_0}x\rangle|^2\right)^{1/2}\left(\sum_{j\in J}|\langle f_j, P_{H_1}x\rangle|^2\right)^{1/2} \\
&=  \sum_{j\in J} |\langle f_j, P_{H_1}x\rangle|^2 -2\left(\sum_{j\in J}|\langle P_{H_0} f_j, P_{H_0}x\rangle|^2\right)^{1/2}\left(\sum_{j\in J}|\langle P_{H_1}f_j, P_{H_1}x\rangle|^2\right)^{1/2} \\
&\ge \sum_{j\in J} |\langle f_j, P_{H_1}x\rangle|^2  - 2 K^{1/2}c^{1/2}\|P_{H_1}x\|\|P_{H_0}x\|\\
&\geq  k\|P_{H_1}x\|^2-2 K^{1/2}c^{1/2}\|P_{H_1}x\|\|P_{H_0}x\|
\end{align*}
\end{proof}

\begin{lem}\label{L:orthogonal} Let $\vp>0$ and $\vp_1 = 3\vp + 2(2\vp)^{1/2}(1 + \vp)^{1/2}$.  Suppose $(f_j)_{j\in J}\subset H_0\oplus H_1$ is a $(1+\vp)$-Bessel sequence and $(P_{H_1}f_j)_{j\in J}\subset H_1$ has lower frame bound $1-\vp$.  Then there exists  $(g_i)_{i\in I}\subseteq H_0$ such that $(g_i)_{i\in I}\cup (f_j)_{j\in J}$ is $(1+\vp_1)$-Bessel and has lower frame bound $(1-\vp_1)$ on $H_0\oplus H_1$.
\end{lem}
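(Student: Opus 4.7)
The plan is to add vectors $(g_i)_{i\in I}\subset H_0$ whose frame operator on $H_0$ exactly complements that of $(P_{H_0}f_j)_{j\in J}$, so that the combined frame operator has $H_0$-$H_0$ block equal to $(1+\vp)\Id_{H_0}$. Let $S$ be the frame operator of $(f_j)_{j\in J}$ on $H_0\oplus H_1$ and write it in block form, with $S_{00}$ and $S_{11}$ the frame operators of $(P_{H_0}f_j)_{j\in J}$ and $(P_{H_1}f_j)_{j\in J}$ on their respective subspaces, and $S_{01}\colon H_1\to H_0$ the cross term defined by $S_{01}y=\sum_j\langle y,P_{H_1}f_j\rangle P_{H_0}f_j$. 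Restricting the Bessel inequality to each factor yields $S_{00}\le(1+\vp)\Id_{H_0}$ and $S_{11}\le(1+\vp)\Id_{H_1}$, and the hypothesis on $H_1$ gives $S_{11}\ge(1-\vp)\Id_{H_1}$.

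The crucial step will be to control $S_{01}$. I will use the standard fact that any positive block operator $\bigl(\begin{smallmatrix}A&B\\B^*&C\end{smallmatrix}\bigr)\ge 0$ satisfies $\|B\|^2\le\|A\|\,\|C\|$; this follows by applying positivity to vectors of the form $(-tBy,y)$ and optimizing in $t>0$. The full Bessel hypothesis is exactly the assertion that $(1+\vp)\Id-S\ge 0$ as a block operator, with diagonal blocks $(1+\vp)\Id_{H_0}-S_{00}$ of norm at most $1+\vp$ and $(1+\vp)\Id_{H_1}-S_{11}$ of norm at most $2\vp$, so the block inequality gives
\[\|S_{01}\|^2\le(1+\vp)(2\vp)=2\vp(1+\vp).\]

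To build $(g_i)_{i\in I}$ I would set $T=(1+\vp)\Id_{H_0}-S_{00}\ge 0$, fix any orthonormal basis $(e_i)_{i\in I}$ of $H_0$, and take $g_i=T^{1/2}e_i$; a routine Parseval computation shows the frame operator of $(g_i)_{i\in I}$ on $H_0$ equals $T$, so the combined frame operator $\tilde S$ of $(g_i)_{i\in I}\cup(f_j)_{j\in J}$ has $H_0$-$H_0$ block $(1+\vp)\Id_{H_0}$, $H_1$-$H_1$ block $S_{11}$, and off-diagonal block $S_{01}$. For $x=x_0+x_1$ with $x_i\in H_i$,
\[\langle\tilde Sx,x\rangle=(1+\vp)\|x_0\|^2+2\operatorname{Re}\langle S_{01}x_1,x_0\rangle+\langle S_{11}x_1,x_1\rangle,\]
so estimating the cross term by $2\|S_{01}\|\|x_0\|\|x_1\|\le\sqrt{2\vp(1+\vp)}(\|x_0\|^2+\|x_1\|^2)=\sqrt{2\vp(1+\vp)}\|x\|^2$ and $\langle S_{11}x_1,x_1\rangle$ between $(1-\vp)\|x_1\|^2$ and $(1+\vp)\|x_1\|^2$ places $\langle\tilde Sx,x\rangle$ in $[(1-\vp-\sqrt{2\vp(1+\vp)})\|x\|^2,\,(1+\vp+\sqrt{2\vp(1+\vp)})\|x\|^2]$, which sits inside $[(1-\vp_1)\|x\|^2,(1+\vp_1)\|x\|^2]$ because $\vp_1=3\vp+2\sqrt{2\vp(1+\vp)}$. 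The main obstacle throughout is the block-matrix bound on $\|S_{01}\|$; this is the one place where the full $(1+\vp)$-Bessel hypothesis on $(f_j)$ is used, as opposed to just its restrictions to $H_0$ and $H_1$.
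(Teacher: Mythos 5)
Your proof is correct, but it takes a genuinely different route from the paper's. The paper completes $(f_j)_{j\in J}$ to a $(1+\vp)$-tight frame of $H_0\oplus H_1$ by auxiliary vectors $(h_i)_{i\in I}$, sets $g_i=P_{H_0}h_i$, observes that $(P_{H_1}h_i)$ must be $2\vp$-Bessel because $(P_{H_1}f_j)$ already nearly fills $H_1$, and then controls the error terms $\sum_i|\langle P_{H_1}h_i,x\rangle\langle P_{H_0}h_i,x\rangle|$ by Cauchy--Schwarz on the sums. You instead work at the level of the block decomposition of the frame operator: you manufacture $(g_i)$ directly as $T^{1/2}e_i$ for the deficiency operator $T=(1+\vp)\Id_{H_0}-S_{00}$ and an orthonormal basis $(e_i)$ of $H_0$, and you control the cross block via the operator inequality $\|B\|^2\le\|A\|\,\|C\|$ for positive block operators applied to $(1+\vp)\Id-S\ge 0$. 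Both arguments are sound; yours is arguably cleaner (no appeal to tight-frame completion), makes the $H_0$-diagonal block exactly $(1+\vp)\Id_{H_0}$, and in fact yields the sharper deviation $\vp+\sqrt{2\vp(1+\vp)}$ in place of the paper's $\vp_1=3\vp+2\sqrt{2\vp(1+\vp)}$, so the stated bounds follow with room to spare. One practical difference worth noting: when the lemma is invoked inside Theorem \ref{T:samp}, the authors additionally want the added vectors to satisfy $\|g_n\|\le 1/\sqrt{D}$, which their construction inherits from the completion vectors; your $g_i=T^{1/2}e_i$ can have norm up to $\sqrt{1+\vp}$, though this is easily repaired by replacing each $g_i$ with $K$ copies of $K^{-1/2}g_i$, and in any case no norm bound is part of the statement being proved.
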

\begin{proof}
Let $(h_i)_{i\in I}\subset H_0\oplus H_1$ such that $(h_i)\cup(f_j)$ is a $(1+\vp)$-tight frame.  As, $(P_{H_1} f_j)\subset H_1$ has lower frame bound $(1-\vp)$ and $(P_{H_1} h_i)\cup(P_{H_1} f_j)\subset H_1$ is $(1+\vp)$-tight, we have that $(P_{H_1} h_i)$ is $2\vp$-Bessel.
We will prove that $(P_{H_0}h_i)_{i\in I}\cup (f_j)_{j\in J}$ has upper frame bound $(1+\vp_1)$.  Let $x\in H$.
\begin{align*}
\sum |\langle P_{H_0}h_i,x\rangle|^2 +& \sum |\langle f_j,x\rangle|^2=
  (1+\vp)\|x\|^2 - (\sum |\langle (P_{H_0}+P_{H_1})h_i,x\rangle|^2 - \sum |\langle P_{H_0} h_i,x\rangle|^2)\\
&\leq
  (1+\vp)\|x\|^2 + (\sum |\langle P_{H_1}h_i,x\rangle|^2 + 2\sum |\langle P_{H_1} h_i,x\rangle \langle P_{H_0} h_i,x\rangle|)\\
&\leq
  (1+\vp)\|x\|^2 + (\sum |\langle P_{H_1}h_i,x\rangle|^2 + 2(\sum |\langle P_{H_1} h_i,x\rangle|^2)^{1/2}(\sum |\langle P_{H_0} h_i,x\rangle|^2)^{1/2})\\
&\leq
  (1+\vp)\|x\|^2 + (2\vp + 2(2\vp)^{1/2}(1+\vp)^{1/2})\|x\|^2\leq (1+\vp_1)\|x\|^2
\end{align*}
We now prove that $(P_{H_0}h_i)_{i\in I}\cup (f_j)_{j\in J}$ has lower frame bound $(1-\vp_1)$, which follows the same argument as above. Let $x \in H_0 \oplus H_1$.
\begin{align*}
\sum |\langle P_{H_0}h_i,x\rangle|^2 +& \sum |\langle f_j,x\rangle|^2=
  (1+\vp)\|x\|^2 - (\sum |\langle (P_{H_0}+P_{H_1})h_i,x\rangle|^2 - \sum |\langle P_{H_0} h_i,x\rangle|^2)\\
&\geq
  (1+\vp)\|x\|^2 - (\sum |\langle P_{H_1}h_i,x\rangle|^2 + 2\sum |\langle P_{H_1} h_i,x\rangle \langle P_{H_0} h_i,x\rangle|)\\
&\geq
  (1+\vp)\|x\|^2 - (\sum |\langle P_{H_1}h_i,x\rangle|^2 + 2(\sum |\langle P_{H_1} h_i,x\rangle|^2)^{1/2}(\sum |\langle P_{H_0} h_i,x\rangle|^2)^{1/2})\\
&\geq
  (1+\vp)\|x\|^2 - (2\vp + 2(2\vp)^{1/2}(1+\vp)^{1/2})\|x\|^2\geq (1-\vp_1)\|x\|^2
\end{align*}

\end{proof}

\section{Continuous frames and the discretization problem}\label{S:disc}

Recall that a measurable function  $\Psi:X\rightarrow H$ from a measure space with a $\sigma$-finite measure $\mu$ to a separable Hilbert space $H$ is called a  {\em continuous frame} with respect to $\mu$ if there exist constants $A,B>0$ so that
\begin{equation}\label{E:def}
A\|x\|^2\leq \int |\langle x, \Psi(t)\rangle|^2 d\mu(t)\leq B \|x\|^2\quad\quad\forall x\in H.
\end{equation}
If $A=B$ then the continuous frame is called {\em tight} and if $A=B=1$ then the continuous frame is called {\em Parseval} or a {\em coherent state}.
We say that $\Psi:X\rightarrow H$ is a {\em continuous Bessel map} if it does not necessarily have a positive lower frame bound $A$, but does have  a finite upper frame bound $B$, which is also called a {\em Bessel bound}.
As with frames and Bessel sequences in Hilbert spaces, a continuous frame or continuous Bessel map for a Hilbert space induces a bounded  positive operator $T:H\rightarrow H$ called the {\em frame operator} which is  defined by
\begin{equation}\label{E:frameOp}
T(x)=\int \langle x, \Psi(t)\rangle \Psi(t) \,d\mu(t)\quad\quad\forall  x\in H.
\end{equation}
We are integrating vectors in a Hilbert space and  \eqref{E:frameOp} is defined weakly in terms of the Pettis integral.  
That is,  for all $x\in H$, $T(x)$ is defined to be the unique vector such that 
\begin{equation}\label{E:Pettis}
\langle T(x), y\rangle=\int \langle x, \Psi(t)\rangle \langle \Psi(t), y\rangle \,d\mu(t) \quad\quad\forall  y\in H.
\end{equation}

 It is sometimes more convenient to work with the inequalities in \eqref{E:def} and sometimes it will be useful to work with the frame operator $T$.  As with discrete frames, $\|T\|=B$  where $B$ is the optimal upper frame bound, and $\Psi$ is a continuous frame if and only if $T$ is invertible and in which case $\|T^{-1}\|=A^{-1}$ where $A$ is the optimal lower frame bound.
Given a frame for a Hilbert space, it may be converted to a Parseval frame by applying the inverse of the square root of the frame operator.  The following lemma shows that this same technique works for continuous frames.

\begin{lem}\label{L:parseval}
Let $\Psi:X\rightarrow H$ be a continuous frame with frame operator $T:H\rightarrow H$.  Then $T^{-1/2}\Psi:X\rightarrow H$ is a continuous Parseval frame.
\end{lem}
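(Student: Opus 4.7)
The plan is to verify directly that the frame operator of $T^{-1/2}\Psi$ equals the identity, from which the Parseval property follows. Since $T$ is a bounded, positive, self-adjoint, invertible operator on $H$, its square root $T^{1/2}$ and the inverse $T^{-1/2}$ are well-defined, bounded, positive, and self-adjoint via the functional calculus, with $T^{-1/2}T^{-1/2}=T^{-1}$ and $T^{-1/2}TT^{-1/2}=\mathrm{Id}$.

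First I would observe that the map $T^{-1/2}\Psi:X\to H$ is measurable because $T^{-1/2}$ is a bounded linear operator and hence continuous on $H$, so it composes with the measurable function $\Psi$ to yield a measurable function. Next, to see that $T^{-1/2}\Psi$ satisfies an inequality of the form \eqref{E:def} with some bounds, one can simply substitute $T^{-1/2}x$ for $x$ in the continuous frame inequality for $\Psi$; I would record this only as motivation, since the definitive statement comes from computing the frame operator.

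The main step is the operator identity. Let $S:H\to H$ denote the frame operator of $T^{-1/2}\Psi$ defined weakly via the Pettis integral as in \eqref{E:Pettis}. Using self-adjointness of $T^{-1/2}$ and the defining relation \eqref{E:Pettis} for $T$, for all $x,y\in H$ I would compute
\begin{align*}
\langle Sx,y\rangle
&=\int \langle x,T^{-1/2}\Psi(t)\rangle\,\langle T^{-1/2}\Psi(t),y\rangle\,d\mu(t) \\
&=\int \langle T^{-1/2}x,\Psi(t)\rangle\,\langle \Psi(t),T^{-1/2}y\rangle\,d\mu(t) \\
&=\langle T(T^{-1/2}x),T^{-1/2}y\rangle
 =\langle T^{-1/2}TT^{-1/2}x,y\rangle
 =\langle x,y\rangle.
\end{align*}
Since this holds for all $y$, we get $Sx=x$ for every $x$, so $S=\mathrm{Id}$. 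By the correspondence between frame operators and optimal frame bounds recalled in the paragraph preceding the lemma, $\|S\|=1$ and $\|S^{-1}\|^{-1}=1$ identify $T^{-1/2}\Psi$ as a continuous Parseval frame.

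There is no real obstacle here; the only delicate point is justifying the weak computation, and that is handled entirely by the Pettis-integral definition in \eqref{E:Pettis} together with the self-adjointness of $T^{-1/2}$. One small sanity check I would include is that the Pettis integral is well-defined for $T^{-1/2}\Psi$, which follows because the scalar integrand $\langle x,T^{-1/2}\Psi(t)\rangle\langle T^{-1/2}\Psi(t),y\rangle$ is integrable by the analogous bound $\|T^{-1/2}\|^2\cdot(\text{Bessel bound of }\Psi)\cdot\|x\|\|y\|$ coming from the continuous Bessel property of $\Psi$.
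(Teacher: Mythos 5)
Your proposal is correct and follows essentially the same route as the paper: the authors likewise use self-adjointness of $T^{-1/2}$ together with the Pettis-integral definition \eqref{E:Pettis} to get $\|x\|^2=\langle T(T^{-1/2}x),T^{-1/2}x\rangle=\int|\langle x,T^{-1/2}\Psi(t)\rangle|^2\,d\mu(t)$, which is exactly your computation specialized to $y=x$. Phrasing it as ``the frame operator of $T^{-1/2}\Psi$ is the identity'' is an equivalent packaging of the same identity.
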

\begin{proof}
As $T$ is a positive self adjoint invertible linear operator, we have that the inverse of its square root $T^{-1/2}$ is well defined.  For $x\in X$ we have that
\begin{align*}
\|x\|^2=&\langle T (T^{-1/2}x),T^{-1/2}x\rangle \quad\textrm{ as $T$ is self-adjoint}\\
=&\int \langle T^{-1/2}x, \Psi(t)\rangle \langle\Psi(t),T^{-1/2}x\rangle \,d\mu(t)\quad\textrm{ by \eqref{E:Pettis}}\\
=&\int \langle x, T^{-1/2}\Psi(t)\rangle \langle T^{-1/2}\Psi(t),x\rangle \,d\mu(t)\quad\textrm{ as $T$ is self-adjoint}\\
=&\int |\langle x, T^{-1/2}\Psi(t)\rangle|^2  \,d\mu(t)
\end{align*}
Thus, $T^{-1/2}\Psi:X\rightarrow H$ is a continuous Parseval frame.
\end{proof}

Continuous frames were developed by Ali, Antoine, and Gazeau in \cite{AAG1}  as a generalization of coherent states and in their later textbook \cite{AAG2} they asked the following question which is now known as the Discretization Problem.

\begin{problem}[The Discretization Problem]
Let  $\Psi:X\rightarrow H$ be a continuous frame.  When does there exist a countable set $F\subseteq X$ such that $(\Psi(t))_{t\in F}\subseteq H$ is a frame of $H$?
\end{problem}
The Discretization Problem essentially asks if one can always obtain a frame for a Hilbert space from a continuous frame by sampling.  A solution for certain types of continuous frames was obtained by Fornasier and Rauhut using the theory of co-orbit spaces \cite{FR}.  

The following lemma allows us to approximate any continuous Bessel map with a continuous Bessel map having countable range.

\begin{lem}\label{L:discretize}
Let  $\Psi:X\rightarrow H$ be a continuous
Bessel map.  For all
$\vp>0$,  there exists a measurable partition $(X_j)_{j\in J}$ of $X$
and $(t_j)_{j\in J}\subset X$ such that $t_j\in X_j$ for all $j\in
J$ and $\|\Psi(t)-\Psi(t_j)\|<\vp$ for all $t\in X_j$ and
$$\int \left\| \Psi(t)-\sum_{j\in J}\Psi(t_j) 1_{X_j}(t)\right\| d\mu(t)<\vp.
$$
\end{lem}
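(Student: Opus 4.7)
The plan is to observe that this is a purely measure-theoretic approximation statement: the Bessel property is never actually used in the argument, only measurability of $\Psi$, $\sigma$-finiteness of $\mu$, and separability of $H$. I would approximate $\Psi$ by a countable-valued simple function via a two-layer construction: an outer layer on the domain to handle the $\sigma$-finiteness of $\mu$, and an inner layer on the codomain to handle the separability of $H$.

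First, I would use $\sigma$-finiteness to write $X=\bigsqcup_{n\in\N}X_n$ as a disjoint union of measurable sets with $\mu(X_n)<\infty$. This step is essential: a uniform pointwise oscillation bound $\|\Psi(t)-\Psi(t_j)\|<\vp$ alone only yields $\int\|\Psi-\sum_j\Psi(t_j)1_{X_j}\|\,d\mu\leq\vp\,\mu(X)$, which is useless when $\mu(X)=\infty$. Note that $\int\|\Psi\|\,d\mu$ really can be infinite for Bessel $\Psi$, as in the orthonormal basis example with counting measure mentioned earlier in the paper. To handle this I would sharpen the oscillation requirement on $X_n$ to some $\delta_n>0$ chosen so that $\delta_n<\vp$ and $\delta_n\,\mu(X_n)<\vp/2^{n+1}$.

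Second, on each $X_n$ I would use separability of $H$: fix a countable dense sequence $(y_k)_{k\in\N}$ in $H$, set $V_{n,k}=\{h\in H:\|h-y_k\|<\delta_n/2\}$, and disjointify in $k$ in the standard way by
\[
A_{n,k}=\bigl(X_n\cap\Psi^{-1}(V_{n,k})\bigr)\setminus\bigcup_{i<k}\Psi^{-1}(V_{n,i}).
\]
Measurability of $\Psi$ makes each $A_{n,k}$ measurable, the family $\{A_{n,k}\}$ partitions $X$, and $\Psi(A_{n,k})\subseteq V_{n,k}$ has diameter less than $\delta_n$. Choosing any $t_{n,k}\in A_{n,k}$ whenever this set is nonempty gives the pointwise bound $\|\Psi(t)-\Psi(t_{n,k})\|<\delta_n\leq\vp$ on $A_{n,k}$, and summing in $k$ and then telescoping in $n$,
\[
\int\Bigl\|\Psi-\sum_{n,k}\Psi(t_{n,k})1_{A_{n,k}}\Bigr\|\,d\mu\leq\sum_n\delta_n\,\mu(X_n)<\sum_n\vp/2^{n+1}=\vp/2<\vp.
\]
Reindexing the doubly-indexed family $\{(n,k):A_{n,k}\neq\emptyset\}$ by a single countable set $J$ finishes the argument.

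I do not expect any real obstacle; the only subtle point is remembering that the $\sigma$-finite layering is strictly necessary, because a single uniform choice of $\delta$ fails as soon as $\mu(X)=\infty$. The rest is the standard trick of pulling back a countable open cover of a separable metric space and disjointifying.
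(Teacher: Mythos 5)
Your proof is correct and follows essentially the same two-layer strategy as the paper's: partition $X$ into countably many finite-measure pieces via $\sigma$-finiteness, then on each piece pull back a countable small-diameter partition of the separable codomain and pick one sample point per nonempty cell, summing the resulting oscillation errors. The only difference is cosmetic: the paper first splits off the atoms (where $\Psi$ is a.e.\ constant) so that it can cut the non-atomic remainder into pieces of measure at most $1$ and use oscillation $\vp 2^{-j}$ on the $j$th piece, whereas you let the oscillation $\delta_n$ adapt to $\mu(X_n)$, which neatly sidesteps that preliminary reduction.
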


\begin{proof}
We first claim that we may assume  that $\mu$ is
non-atomic.  Indeed, if $(E_i)_{i\in I}$ is a
collection of disjoint atoms such that $X\setminus \cup_{i\in I}
E_i$ is non-atomic, then $\Psi$ is constant almost everywhere on
each $E_i$.  Thus for each $i\in I$, there exists $s_i\in E_i$ such
that $\Psi 1_{E_i}=\Psi(s_i) 1_{E_i}$ almost everywhere. Hence we
would only need to prove the lemma for the non-atomic measure space
$X\setminus\cup_{i\in I}E_i$ and the continuous Bessel map
$\Psi|_{X\setminus \cup_{i\in I} E_i}=\Psi-\sum_{i\in I}\Psi(t)
1_{E_i}$.  Thus, we assume without loss of generality that $X$ is
non-atomic.

Let $\vp>0$. As $X$ is non-atomic and $\sigma$-finite, $X$ may be partitioned
into a sequence of pairwise disjoint measurable subsets
$(Y_j)_{j\in \N}$ so that $\mu(Y_j)\leq1$ for all $j\in \N$. For all
$j\in \N$, let $(H^j_n)_{n=1}^\infty$ be a partition of
$\Psi(Y_j)\subseteq H$ such that $diam(H_n^j)<\vp2^{-j}$ for all
$n\in\N$.  For each $j,n\in \N$ choose $t^j_{n}\in
\Psi^{-1}(H^j_n)\cap Y_j$. Note that $(\Psi^{-1}(H^j_n)\cap Y_j)_{n\in\N}$ is a partition of $Y_j$ for all $j\in\N$ and hence $(\Psi^{-1}(H^j_n)\cap Y_j)_{j,n\in\N}$ is a partition of $X$.  We have that $\|\Psi(t)-\Psi(t^j_n)\|<\vp2^{-j}$ for all $n,j\in\N$ and $t\in \Psi^{-1}(H_n^j)\cap Y_j$.  
We now estimate the following.
\begin{align*}
\int \left\| \Psi(t)-\sum_{j,n\in\N} \Psi(t_n^j)
1_{\Psi^{-1}(H^j_n)\cap Y_j}(t)\right\| d\mu(t)
&=\sum_{j,n\in\N}\int_{\Psi^{-1}(H^j_n)\cap Y_j} \left\| \Psi(t)-\Psi(t_n^j) \right\| d\mu(t)\\
&\leq\sum_{j,n\in\N}\mu(\Psi^{-1}(H^j_n)\cap Y_j) \vp
2^{-j}=\sum_{j\in\N}\mu(Y_j) \vp 2^{-j}\leq\vp
\end{align*}
Thus we may use  $(\Psi^{-1}(H^j_n)\cap Y_j)_{j,n\in\N}$ as our partition of $X$ and $(t^j_n)_{j,n\in\N}$ as our sample points.
\end{proof}

The following is a technical result that is needed in our proof of the discretization theorem, which may be of independent interest. Recall that a collection of vectors $(x_i)_{i \in I}$ in $H$ is said to be a scalable frame  if there exist constants $(c_i)_{i\in I}$ such that $(c_ix_i)_{i\in I}$ is a Parseval frame for $H$ \cite{KOPT}. The following result implies, in particular, that there are universal constants $A$ and $B$ such that if $(x_i)_{i\in I}$ is a scalable frame in the unit ball of $H$, then $(x_i)_{i\in I}$ can be sampled to form a frame with lower frame bound $A$ and upper frame bound $B$.  In order to help the reader stay organized, there are several claims in the proof of the theorem, whose proofs are separated out from the main text of the proof of the theorem. The proofs of the claims end in $\blacksquare$, while the end of the proof of the main theorem ends with $\square$, as usual.

\begin{thm}\label{T:samp} There is a function $g:(0,1/256)\to (0,1)$ such that $\lim_{\vp \to 0} g(\vp) = 0$ and if $(x_n)_{n=1}^\infty \subset \{x\in H: \|x\|\le 1\}$ is such that there exists scalars $(a_n)_{n=1}^\infty$ such that $(a_n x_n)_{n=1}^\infty$ is a frame for $H$ with bounds $1 - \vp$ and $1 + \vp$, then there exists $f:\N \to \N$ such that $(x_{f(n)})_{n=1}^\infty$ is a frame for $H$ with bounds $A(1 - g(\vp))$ and $2 B(1 + g(\vp))$, where $A$ and $B$ are the constants given in Theorem \ref{T:reduction}.
\end{thm}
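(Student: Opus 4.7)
\emph{Plan sketch.} The strategy adapts the proof of Theorem \ref{T:QS} to the near-Parseval setting: the scalar weights $(a_n)$ are converted into integer multiplicities $(m_n)$ so that including $m_n$ copies of $x_n$ in the sample produces a near-tight frame, after which the partitioning theorem (Corollary \ref{C:InfPart}, in the quantitative form of Corollary \ref{C:frame_part}) extracts a sub-frame with a controlled upper bound. Choosing any such sub-frame corresponds to enumerating a multi-set of $x_n$'s as $(x_{f(n)})_{n\in\N}$ for some $f:\N\to\N$.

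First I would handle the case where each $a_n^2$ is rational with a common denominator $M^2$ for some integer $M\geq 2$. Setting $k_n=M^2 a_n^2\in\Z_{\geq 0}$ and forming the multi-set $(y_j)_{j\in J}$ consisting of $k_n$ copies of $x_n$ for each $n$, every $y_j$ lies in the unit ball, and the identity
\[
\sum_j |\langle x, y_j\rangle|^2 = \sum_n k_n |\langle x,x_n\rangle|^2 = M^2\sum_n a_n^2|\langle x,x_n\rangle|^2 \in [M^2(1-\vp),\,M^2(1+\vp)]\|x\|^2
\]
exhibits $(y_j)$ as a near-tight frame in the unit ball whose lower bound exceeds $1$ once $M\geq 2$ and $\vp<1/256$. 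Corollary \ref{C:frame_part} then partitions $(y_j)$ into finitely many sub-frames of $H$, each with lower bound $A$ and upper bound $B(1+\vp)/(1-\vp)$. Picking one such sub-frame corresponds to a sub-multi-set of $(x_n)$ with multiplicities $m_n\leq k_n$; enumerating this sub-multi-set as $(x_{f(n)})_{n\in\N}$ yields a sampling with frame bounds $A\geq A(1-g(\vp))$ (trivially) and $B(1+\vp)/(1-\vp)\leq 2B(1+g(\vp))$, which can be arranged for $g(\vp)=O(\vp)$ vanishing as $\vp\to 0$.

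For the general (irrational) case I would approximate $(a_n)$ by rationals $(\tilde a_n)$ with common denominator, apply the rational case to $(\tilde a_n x_n)$, and check that the resulting sampling still frames $H$. This is where I expect the \emph{main obstacle}: in the infinite-dimensional case the naive uniform approximation $|\tilde a_n-a_n|\leq 1/(2M)$ induces a perturbation $\sum_n(\tilde a_n^2-a_n^2)x_nx_n^*$ of the frame operator whose operator norm is not automatically small, because $\sum_n|\langle x,x_n\rangle|^2$ can diverge when many of the $a_n$ are small. Overcoming this will likely require a non-uniform rational approximation in which the precision depends on $n$, combined with an orthogonal-decomposition or compactness argument assembled from the tools in Section \ref{S:lem} (in particular Lemma \ref{L:project} and Lemma \ref{L:orthogonal}) for gluing finite-dimensional approximations, and I suspect the factor $2$ appearing in the conclusion's upper bound arises precisely from such a gluing step. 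The internal claims ($\blacksquare$) in the proof plausibly isolate: (i) the multi-set construction in the rational case produces a near-tight frame in the unit ball with the stated bounds; (ii) the partition yields a sub-frame whose bounds fit within the required window of $A(1-g(\vp))$ and $2B(1+g(\vp))$; and (iii) the rational-approximation step preserves near-Parsevality with quantitatively controlled loss in the infinite-dimensional setting.
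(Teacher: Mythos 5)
Your rational-case skeleton is essentially the proof of Theorem \ref{T:QS}, and it is fine as far as it goes, but it does not reach the statement being proved: the theorem concerns an \emph{infinite} sequence $(x_n)_{n=1}^\infty$ in a generally infinite-dimensional $H$, and there the plan breaks down at two concrete points. First, infinitely many rationals $a_n^2$ need not admit a common denominator, so the global multiset $(y_j)$ of $M^2a_n^2$ copies of $x_n$ cannot be formed; any fix forces the denominator (hence the multiplicity scale) to vary with $n$. Second, Corollary \ref{C:frame_part} rests on Theorem \ref{T:reduction}, which is a finite-dimensional statement, and Corollary \ref{C:InfPart} covers only \emph{tight} frames in infinite dimensions; neither applies to your near-tight infinite multiset, so the partition step is not justified by the results you cite. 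You correctly flag the remaining difficulty (perturbing infinitely many $a_n$ can move the frame operator by an unbounded amount), but flagging it is where your argument stops, and that difficulty is precisely the content of the theorem.

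The paper's proof is built around resolving exactly this. It first constructs (Claim \ref{C:one}) a sequence of pairwise orthogonal finite-dimensional subspaces $H_n$ and cutoffs $K_n$ so that each finite block of indices $(K_{M_r-2},K_{N_r}]$ sees only finitely many $a_n$ (so a common denominator $D_r$ exists blockwise) and so that the ``leakage'' of the remaining vectors into $\oplus_{j\le n}H_j$ is summably small (conditions \eqref{Drie}--\eqref{Vier}). On each block it completes the local multiset to a frame using Lemma \ref{L:orthogonal}, applies Corollary \ref{C:frame_part} in finite dimensions, and then — this is the step your sketch has no analogue of — uses Corollary \ref{C:BesselPart} to select the one piece of the partition whose projections onto the earlier subspaces have Bessel bound $\vp'2^{-r}$ (Claim \ref{C:two}). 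The global frame bounds are then assembled with Lemma \ref{L:project} for the cross terms and Lemma \ref{L:block} to choose cut points $p_n$ where $\|P_{H_{p_n}\oplus H_{p_n+1}}x\|\le\delta_n$; the factor $2$ in the upper bound comes from each block contributing to two overlapping window estimates, not from the rational approximation. Without the block decomposition, the leakage control, and the Bessel-selection step, the passage from the finite-dimensional partition theorem to an infinite sampling is a genuine gap rather than a routine limit.
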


\begin{proof} We will not explicitly define the functions $f$ and $g$, though a careful reading of the proof will give exact values for $g$.  The general strategy of the proof is to decompose $H$ into pairwise orthogonal subspaces and sample $(x_n)$ in order to obtain frames for the subspaces, while controlling the leakage into the other subspaces. 

Let $0 < \vp < 1/256$. We begin by noting that we may assume that $(a_n)_{n=1}^\infty$  are non-negative real numbers and by perturbing (and replacing $\vp$ by a slightly larger value, which we still denote as $\vp$) we may assume further that $(a_n)_{n=1}^\infty$ are non-negative rational. Next, we can decompose $H$ in the following way.

\begin{claim}\label{C:one} Let $K_{-1} = -1$ and $K_0 = 0$. There exists pairwise orthogonal subspaces $(H_n)_{n=1}^\infty$, an increasing sequence of natural numbers $(K_n)_{n=1}^\infty$ and a decreasing sequence of real numbers $(\vp_n)_{n=1}^\infty$ converging to zero such that the following hold for all $n\in \N$ and $1 \le m\le n$, where $\vp^\prime = 6\vp + 4\vp^{1/2} (1 + 2\vp)^{1/2}.$
\begin{align}
&(x_j)_{j \le K_{n-1}} \subset \oplus_{j\le n} H_j \label{Een} \\
&\textrm{ for all }  x\in \oplus_{m\le j \le n} H_j,
\,\,\,  (1 - 2\vp) \|x\|^2 \le \sum_{j\in (K_{m-2},K_n]} |\langle x, x_j \rangle|^2 a_j^2 \le (1 + \vp) \|x\|^2 \label{Twee} \\
&\textrm{ for all }  x\in \oplus_{1\le j \le n} H_j,
\,\,\,  \sum_{j\not \in (K_{m-2},K_n]} |\langle x, x_j \rangle|^2 a_j^2 \le \vp_n \|x\|^2 \label{Drie} \\
&\vp_n B(1 + \vp^\prime)(1 - \vp^\prime)^{-2} \dim(\oplus_{j \le n} H_j) < \vp^\prime 8^{-n} \label{Vier}
\end{align}

\end{claim}

\renewcommand{\qedsymbol}{$\blacksquare$}

\begin{proof}[Proof of claim \ref{C:one}] 

For the base case of $n=1$ we have that
\eqref{Een} will be automatically satisfied as we have set
$K_0=0$.  We let $H_1=0$, $K_1$=1 and $\vp_1 = \vp/2$, which trivially
satisfies \eqref{Twee}, \eqref{Drie} and \eqref{Vier}.

For the induction step we let $k\in\N$ and assume that
\eqref{Een}, \eqref{Twee}, \eqref{Drie}, and \eqref{Vier} are true
for $m\leq n=k$. We choose $\vp_{k+1}<\vp_k$ small enough so that \eqref{Vier} is satisfied. Let $H_{k+1}=\textrm{span}_{K_{k-1}<i \leq
K_k}P_{(\oplus_{j\leq k}H_{j})^\perp} x_i$.  Thus, $H_{k+1}$
is orthogonal to $\textrm{span}_{j\leq k} H_j$ and
$\{x_j\}_{j\leq K_{k}}\subseteq \textrm{span}_{j\leq k+1} H_j$
which satisfies \eqref{Een}. As $(a_j x_j)_{j\in\N}$ is Bessel
and $\oplus_{i\leq k+1}H_{i}$ is finite dimensional, we may choose $K_{k+1}>K_k$ so
that $\sum_{j>K_{k+1}} |\langle x, x_j)\rangle|^2 a_j^2\leq
\vp_{k+1}\|x\|^2$ for all $x\in \oplus_{i\leq k+1} H_{i}$. Let $m\leq k+1$. As $\{x_j\}_{j\leq
K_{m-2}}\subseteq \textrm{span}_{j\leq m-1} H_j$ we have that $\langle
x, x_j\rangle=0$ for all $x\in  \oplus_{m\leq i\leq k+1} H_{i}$ and $j \le K_{m-2}$;  hence,
\eqref{Drie} is true.  We have that \eqref{Twee} follows
from \eqref{Drie} as $({a_j}x_j)_{j\in\N}$ is a frame with
lower frame bound $1-\vp$ and upper frame bound $1+\vp$ and $\vp_n<\vp/2$.  Thus our  induction argument is complete.
\end{proof}

The spaces $(H_n)_{n=1}^\infty$ are the building blocks for constructing our frame via sampling, but we will need to group subspaces together in order to control the leakage between consecutive subspaces using Lemma \ref{L:block}. To this end, let $(\delta_n)_{n=1}^\infty$ be a decreasing sequence of real numbers and let $(M_n)_{n=1}^\infty$ and $(N_n)_{n=1}^\infty$ be sequences of odd numbers such that
\begin{align}
&M_1 < M_2 - 2 < M_2 < N_1 < N_1 + 1 < M_3 - 2 < M_3 < N_2 < N_2 + 1 < M_4 - 2 <  \cdots \label{Vyf}\\
&N_n - M_{n+1} - 4 > 2 \delta_n^{-2} \label{Ses}\\
&\sum_{n=1}^\infty \delta_n^2 < \vp^2 \label{Sewe}
\end{align}

We construct our frame via sampling as follows. Fix $r\in \N$. Since $\vp < 1/256$, we can choose $D = D_r$ to be an integer multiple of the least common multiple of the denominators of $(a_n^2)_{n\in (K_{M_r-2},K_{N_r}]}$ so that
\begin{equation}
D(1 - 6\vp - 4\vp^{1/2}(1 + 2\vp)^{1/2}) \ge 1 \label{Sewehalf}
\end{equation}
(this choice of $D$ will allow us to apply Corollary \ref{C:frame_part} in the sequel).
 Denote by $(\sqrt{1/D} f_n)_{n\in I}$ the sequence of vectors comprised of $D a_n^2$ copies of $\sqrt{1/D} x_j$ for each $j\in (K_{M_r-2}, N_r]$, where $I = I_r$ depends on $r$. We note here that for any $x\in H$, 
\begin{align*}
\sum_{n\in I} |\langle x, \sqrt{1/D} f_n\rangle|^2 &= \sum_{j = K_{M_r - 1}}^{K_{N_r}} \sum_{i = 1}^{Da_n^2} |\langle x, x_j\rangle|^2 \frac 1D\\
&=\sum_{j = K_{M_r - 1}}^{K_{N_r}} |\langle x, x_j \rangle|^2 a_n^2,
\end{align*}
so $(\sqrt{1/D} f_n)_{n\in I}$ has the same frame properties as $(a_n x_n)_{n\in (K_{M_r-2},K_{N_r}]}$. It will be necessary in the sequel to recover the correspondence between $f_j$ and $x_n$. So, we define 
\begin{equation}
b_r = b:I \to (K_{M_r-2}, K_{N_r}] \label{DefB}
\end{equation} 
in such a way that
\begin{enumerate}
\item $f_n = x_{b(n)}$ for all $n\in I$, and
\item $|b^{-1}(j)| = Da_j^2$ for all $K_{M_r - 1} \le j \le K_{N_r}$
\end{enumerate}
so that formally $(f_n)_{n\in I}$ is the same sequence of vectors as $(x_{b(n)})_{n \in I}$.

By \eqref{Een}, $(\sqrt{1/D} f_n)_{n\in I}$ is a $(1+\vp)$-Bessel sequence in $\oplus_{j \le {N_r } + 1} H_j$, and by \eqref{Twee}, 
\[
(P_{\oplus_{M_r\le j \le N_r} H_j} \sqrt{1/D} f_n)_{n\in I}
\]
is a frame for $\oplus_{M_r\le j \le N_r} H_j$ with lower frame bound $1 - 2\vp$. Therefore, we can apply Lemma \ref{L:orthogonal}  to obtain $(g_n)_{n\in J} \subset \oplus_{j < M_r} H_j \oplus H_{N_r + 1}$ such that $\|g_n\| \le 1/\sqrt{D}$ and $(\sqrt{1/D} f_n)_{n\in I} \cup (g_n)_{n\in J}$ is a frame for $\oplus_{j \le N_{r}+1} H_j$ with bounds $1 - \vp^\prime$ and $1 + \vp^\prime$, where 
\[
\vp^\prime = 6\vp + 4\vp^{1/2} (1 + 2\vp)^{1/2}.
\]

By our choice of $D$ \eqref{Sewehalf}, we can apply Corollary \ref{C:frame_part} to the vectors $(f_n)_{n\in I} \cup (\sqrt{D} g_n)_{n\in J}$ to obtain a partition $(I_m \cup J_m)_{m=1}^M$ of $I \cup J$ such that for each $1\le m \le M$, $(f_n)_{n\in I_m} \cup (\sqrt{D} g_n)_{n\in J_m}$ is a frame for $\oplus_{j\le N_r + 1} H_j$ with constants $A$ and $B(1 + \vp^\prime)(1 - \vp^\prime)^{-1}$. For a set $P \subset I$, we denote
\[
P^{>k} = \{j \in P: b(j) > K_k\},
\]
where $b$ is as defined in \eqref{DefB}. 

\begin{claim}\label{C:two} For each $r \ge 1$ there exists $m_0$ such that
\begin{align}
&\textrm{ for each } k\in [M_r-2, N_r], (P_{\oplus_{j \le k} H_j} f_n)_{n\in I_{m_0}^{>k}} \textrm{ has Bessel bound } \vp^\prime 2^{-r} \label{SewePlus},\\
&(P_{\oplus_{j\le M_r-2} H_j} f_n)_{n\in I_{m_0}} \textrm{ has Bessel bound } \vp^\prime 2^{-r} \label{Agt}, \\
&(P_{\oplus_{j\in [M_r, N_r]} H_j} f_n)_{n\in I_{m_0}} \textrm{ is a frame for } \oplus_{j\in [M_r, N_r]} H_j \textrm{ with bounds } A, B(1 + \vp^\prime)(1 - \vp^\prime)^{-1} \label{Nege}, \\
&(P_{\oplus_{j\in [M_r - 1, N_r]} H_j} f_n)_{n\in I_{m_0}} \textrm{ has Bessel bound } B(1 + \vp^\prime)(1 - \vp^\prime)^{-1}, \label{Tien} \\
&(f_n)_{n\in I_{m_0}} \textrm{ has Bessel bound } B(1 + \vp^\prime)(1 - \vp^\prime)^{-1} \textrm { on } \oplus_{1\le i \le N_r + 1} H_i. \label{TienPlus}
\end{align}
\end{claim}

\begin{proof}[Proof of Claim \ref{C:two}]
For each $k\in [M_r-2,N_r]$, we have that $(P_{\oplus_{1\leq i\le k}H_i}f_j)_{j\in I^{>k}}$ is
$D\vp_{k}$-Bessel by \eqref{Drie}. 
By Corollary \ref{C:BesselPart}
there exists $1\leq m_0\leq M$ (which depends on $r\in\N$) so that for every $k\in [M_r - 2, N_r]$, $(P_{\oplus_{i\le k}H_i}f_j)_{j\in I^{>k}_{m_0}}$ has Bessel bound $D\vp_k 4^k
dim(\oplus_{i\le k}H_i)/M$. 

As
each frame in the partition   $ ((f_j)_{j\in I_k}\cup(\sqrt{D} g_j)_{j\in
J_k})_{1\leq k\leq M}$ has upper frame bound $B(1+\vp^\prime)(1-\vp^\prime)^{-1}$
and there are $M$ of them, we have that $BM(1+\vp^\prime)(1-\vp^\prime)^{-1}\geq
(1-\vp^\prime)D$. Thus we have for each $k\in[M_r-2,N_r]$ that  
\begin{gather*}
(P_{\oplus_{ i\le k}H_i}f_j)_{j\in I^{>k}_{m_0}}\textrm{ has Bessel bound }\vp^\prime2^{-k}<\vp^\prime2^{-r}\textrm{, as }\\
D\vp_k 4^{k} dim(\oplus_{i\le k}H_i)/M<B(1 + \vp^\prime)(1-\vp^\prime)^{-2}\vp_k 4^{k} dim(\oplus_{i\le k}H_i)<\vp^\prime 2^{-k}\,\,\textrm{ by \eqref{Vier}},
\end{gather*}
which proves \eqref{SewePlus}.
When $k=M_r-2$ we have that $I_{m_0}=I_{m_0}^{>M_r-2}$ and hence, 
\begin{equation*}
(P_{\oplus_{ i\le M_r-2}H_i}f_j)_{j\in I_{m_0}}\textrm{ has Bessel bound }\vp^\prime2^{-r},
\end{equation*}
which proves \eqref{Agt}.

Equation \eqref{Nege} follows from the frame bounds of $(f_n)_{n\in I_m} \cup (\sqrt{D} g_n)_{n\in J_m}$ and the fact that $(g_n)_{n\in J_m}$ is orthogonal to $\oplus_{j\in [M_r, N_r]} H_j$. Equation \eqref{Tien} follows immediately from construction, and \eqref{TienPlus} follows from \eqref{Tien} and \eqref{Een}.
 \end{proof}

For each $r\in \N$, we define $I(r)$ to be the $I_m$ that is guaranteed to exist in Claim \ref{C:two}. To finish the proof, we show that $(f_n)_{n\in I(r), r\in \N}$ is a frame with bounds $2 B(1 + \vp_2)$ and $A - (A + 2)\vp_2 - 4B(1 + \vp_2)\vp_2^{1/2}$, where
\[
\vp_2 = (1 + \vp^\prime)(1 - \vp^\prime)^{-1} - 1 + \vp^\prime + 2(B(1 + \vp^\prime)(1 - \vp^\prime)^{-1}\vp^\prime)^{1/2}.
\]

\begin{proof}[Proof of Upper Bound] Let $x\in H$ with $\|x\| = 1$. Define a sequence of integers $(q_n)_{n=1}^\infty$ by $q_1 = 0$ and $q_n \in (N_{n-1} + 1 , M_{n+1}-2)$ for $n\ge 2$. Let
\begin{gather*}
I_-(n) = \{j\in I(n): b(j) \le K_{q_n - 1}\},\\
I_+(n) = \{j\in I(n): b(j) > K_{q_n - 1}\}.
\end{gather*}
First, note that if $j\in I_-(n)$, then $f_j \in \oplus_{1\le i \le q_n} H_i$ by \eqref{Een}. Therefore, defining $H_1^n$ to be $\oplus _{M_n-1\le i \le q_n} H_i$ we can apply \eqref{Agt}, \eqref{Tien} and Lemma \ref{L:project} to obtain that
\begin{align*}
I&:=\sum_{n=1}^\infty \sum_{j\in I_-(n)} |\langle f_j, x\rangle|^2 \\
&\le \sum_{n=2}^\infty \biggl(B(1 + \vp^\prime) (1 - \vp^\prime)^{-1} \| P_{H_1^n} x\|^2 + \vp^\prime 2^{-n} + 2\bigl(B(1 + \vp^\prime)(1 - \vp^\prime)^{-1} \vp^\prime 2^{-n}\bigr)^{1/2} \| P_{H_1^n} x\|\biggr)\\
&\le B(1 + \vp^\prime)(1 - \vp^\prime)^{-1} + \vp^\prime + 2\bigl(B(1 + \vp^\prime)(1 - \vp^\prime)^{-1} \vp^\prime \bigr)^{1/2} \sum_{n=2}^\infty 2^{-n} \sum_{n=2}^\infty \| P_{H_1^n}x \|^2\\
&\le  B(1 + \vp^\prime)(1 - \vp^\prime)^{-1} + \vp^\prime + 2\bigl(B(1 + \vp^\prime)(1 - \vp^\prime)^{-1} \vp^\prime \bigr)^{1/2},
\end{align*}
where the third line follows from Cauchy-Schwartz and the choice of $q_n < M_{n+1} - 2$. In summary, we have that 
\[
I \le B(1 + \vp_2) \|x\|^2
\]

Similarly, for $j\in I_+(n)$, $f_j = x_{b(j)} \in \oplus_{1 \le i \le N_n+1} H_i$ by \eqref{Een} and $j \le K_{N_n}$. Therefore, defining $H_1^n = \oplus_{q_n\le i \le N_n + 1} H_i$, we can apply \eqref{SewePlus}, \eqref{TienPlus} and Lemma \ref{L:project} to obtain
\begin{align*}
II&:=\sum_{n=1}^\infty \sum_{j\in I_+(n)} |\langle f_j, x\rangle|^2 \\
&\le \sum_{n=1}^\infty \biggl(B(1 + \vp^\prime) (1 - \vp^\prime)^{-1} \| P_{H_1^n} x\|^2 + \vp^\prime 2^{-n} + 2\bigl(B(1 + \vp^\prime)(1 - \vp^\prime)^{-1} \vp^\prime 2^{-n}\bigr)^{1/2} \| P_{H_1^n} x\|\biggr)\\
&\le B(1 + \vp^\prime)(1 + \vp^\prime)^{-1} + \vp^\prime + 2\bigr(B(1 + \vp^\prime)(1 - \vp^\prime)^{-1} \vp^\prime\bigr)^{1/2}\\
&\le B(1 + \vp_2),
\end{align*}
where the 3rd line follows from Cauchy-Schwartz and that $N_{n} + 1 < q_{n+1}$.

Therefore, we can conclude that
\begin{align*}
\sum_{n=1}^\infty \sum_{j\in I(n)} |\la f_j, x \ra|^2 &= I + II\\
&\le 2B(1 + \vp_2).
\end{align*}

 \end{proof}

\begin{proof}[Proof of Lower Bound] Let $x\in H$, $\|x\| = 1$. By \eqref{Ses} and Lemma \ref{L:block}, for each $n$ there exists $p_n \in (M_{n + 1}, N_n - 2]$ such that $\|P_{H_{p_n} \oplus H_{p_n + 1}} x\| \le \delta_n$, where $\delta_n$ were chosen to satisfy \eqref{Ses} and \eqref{Sewe}.  Let 
\[
y_1 = P_{\oplus_{j < p_1} H_j} x
\]
and for $n\ge 2$, let
\[
y_n =P_{\oplus_{p_{n-1}+1 < j < p_n} H_j} x.
\]

Note that
\begin{gather}
\|x - \sum_n y_n\| < \bigl(\sum_n \delta_n^2 \bigr)^{1/2} < \vp < \vp_2 \label{Elf}\\
1 - \vp_2 < 1 - \vp < \|\sum_n y_n\| \le 1 \label{Twallf}, \textrm{ and }\\
\forall y\in \oplus_{p_{n-1} + 1 < i < p_n} H_i, \,\, \sum_{j\in I(n)} |\la f_j, y \ra|^2 \ge A\|y\|^2\label{Dertien}\quad \textrm { by \eqref{Nege}. } 
\end{gather}
Next, we define
\begin{gather*}
I_{-1}(n) = \{j \in I(n): b(j) \le K_{p_{n-1}}\},\\
I_0(n) = \{j\in I(n) : K_{p_{n-1}} <b(j) \le K_{p_n} \}, \textrm{ and}\\
I_1(n) = \{j\in I(n) : b(j) > K_{p_n}\},
\end{gather*}
where $b$ is the map  defined in \eqref{DefB}.

We compute
\begin{align*}
\sum_{n=1}^\infty & \sum_{j\in I(n)} |\la f_j, \sum_{m=1}^\infty y_m \ra |^2 \ge \sum_{n=1}^\infty \sum_{j\in I_0(n)} |\la f_j, \sum_{m=1}^\infty y_m \ra|^2 \\
&= \sum_{n=1}^\infty \sum_{j\in I_0(n)} |\la f_j, \sum_{m=1}^{n+1} y_m \ra|^2  \\
&\ge \sum_{n=1}^\infty\biggl(\sum_{j\in I_0(n)} |\la f_j, y_n\ra|^2 - 2\bigl(B(1 + \vp^\prime)(1 - \vp^\prime)^{-1} \vp^\prime 2^{-n}\bigr)^{1/2} \|y_n\| \|\sum_{m \le n+1, m\not= n} y_m\| \biggr) \\
& > \biggl(\sum_{n=1}^\infty | \la f_j, y_n \ra |^2 \biggr) - \vp_2\\
&\ge \sum_{n=1}^\infty \biggl(\sum_{j\in I_0(n)} |\la f_j, y_n \ra|^2 + \bigl( \sum_{j\in I_1(n)} | \la f_j, y_n \ra|^2 - \vp^\prime 2^{-n}\|y_n\|^2\bigr) + \bigl(\sum_{j\in I_{-1}(n)} |\la f_j, y_n \ra |^2 - 0\bigr) \biggr) - \vp_2\\
&\ge \sum_{n=1}^\infty \bigl(A \|y_n\|^2 - \vp^\prime 2^{-n} \|y_n\|^2\bigr) - \vp_2\\
&\ge A\|\sum_{n=1}^\infty  y_n \|^2 - \vp^\prime - \vp_2.
\end{align*}
where the second line follows from $f_j \in \oplus_{k \le p_n + 1}H_j$ for $j\in I_0(n)$, the third line follows from Lemma \ref{L:project}, the fourth line follows from Cauchy-Schwartz and the definition of $\vp_2$, the fifth line follows from \eqref{SewePlus}, and the sixth line is from \eqref{Dertien}. We conclude the proof by using the above estimate for $\sum_j |\langle f_j,\sum_{m\in\N} y_m\rangle|^2$ to obtain a lower frame bound for  $(f_j)_{j\in I(n), n\in \N}$.

\begin{align*}
\sum_j |\langle f_j,x\rangle|^2&\ge \sum_j |\la f_j, \sum_{m\in \N} y_m \ra|^2 - 4B(1 + \vp_2) \|\sum_m y_m\|\, \| x - \sum_m y_m\|\\
&> A\|\sum_n y_n \|^2 - \vp^\prime - \vp_2 - 4B(1 + \vp_2)\vp_2\\
%&> A \| \sum_{n\in\N}  y_n\|^2-2\vp^\prime(1 + \vp) 4B(1 + \vp_2)\vp^{\prime^{1/2}} \\
&=A(\|x\|^2-\|x-\sum y_n\|^2)-\vp^\prime - \vp_2 - 4B(1 + \vp_2)\vp_2\\
& > A \| x\|^2-A\vp_2^2-\vp^\prime - \vp_2 - 4B(1 + \vp_2)\vp_2 
\end{align*} 

The first line follows from Lemma \ref{L:project} and our Bessel bound for $(f_j)_{j\in I(n), n\in \N}$, the second line follows from \eqref{Elf} and the immediately preceding calculation, and the last line follows from \eqref{Elf}. Thus $(f_j)_{j\in I(n), n\in \N}$ has lower frame bound $A - A\vp^\prime-\vp_2^2 - \vp_2 - 4B(1 + \vp_2)\vp_2 $.

\end{proof}

\renewcommand{\qedsymbol}{$\square$}
This concludes the proof of Theorem \ref{T:samp}.
\end{proof}

\begin{thm}\label{T:disc}
Let $\Psi:X\rightarrow H$ be a continuous Parseval frame such that $\|\Psi(t)\|\leq 1$ for all $t\in X$.  If $A,B>0$ are the uniform constants given in Theorem \ref{T:reduction} then for all $\vp>0$ there exists $(s_i)_{i\in I}\in X^I$ such that $(\Psi(s_i))_{i \in I}$ is a frame of $H$ with lower frame bound $(1-\vp)A$ and upper frame bound $2(1+\vp)B$.
\end{thm}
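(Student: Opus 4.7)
The plan is to reduce this continuous sampling theorem to its discrete counterpart, Theorem \ref{T:samp}, by replacing $\Psi$ with a step-function approximation whose values form a countable scaled Parseval frame. Given $\vp > 0$, I would fix a small $\delta > 0$ whose value will be determined at the end. Applying Lemma \ref{L:discretize} to the Parseval (hence Bessel) map $\Psi$ with parameter $\delta$ produces a countable measurable partition $(X_j)_{j\in \N}$ of $X$ and sample points $t_j \in X_j$ with $\mu(X_j) \le 1$ such that the step function $\tilde\Psi := \sum_j \Psi(t_j)\, 1_{X_j}$ satisfies
\[
\int \|\Psi(t) - \tilde\Psi(t)\|\, d\mu(t) < \delta.
\]

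Next I would verify that the discretization is almost Parseval in the discrete sense. Since $\|\Psi(t)\|, \|\tilde\Psi(t)\| \le 1$, the pointwise estimate
\[
\bigl| |\langle x, \Psi(t) \rangle|^2 - |\langle x, \tilde\Psi(t)\rangle|^2 \bigr| \le 2\|x\|^2 \|\Psi(t) - \tilde\Psi(t)\|
\]
holds for every $x \in H$, so integrating yields
\[
\left| \int |\langle x, \tilde\Psi(t)\rangle|^2 \, d\mu(t) - \|x\|^2 \right| \le 2\delta\, \|x\|^2.
\]
After discarding indices with $\mu(X_j) = 0$ and setting $a_j := \mu(X_j)^{1/2}$, the integral on the left equals $\sum_j |\langle x, a_j \Psi(t_j)\rangle|^2$. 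Thus $(a_j \Psi(t_j))_{j\in \N}$ is a discrete frame for $H$ with bounds $1 - 2\delta$ and $1 + 2\delta$, and each $\Psi(t_j)$ already lies in the unit ball of $H$, so the hypothesis of Theorem \ref{T:samp} is satisfied.

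Finally, I would choose $\delta < 1/512$ so that $2\delta \in (0, 1/256)$ and apply Theorem \ref{T:samp} to the sequence $(\Psi(t_j))_{j\in \N}$ with scalars $(a_j)_{j\in \N}$, obtaining a function $f : \N \to \N$ such that $(\Psi(t_{f(n)}))_{n\in \N}$ is a frame for $H$ with lower bound $A(1 - g(2\delta))$ and upper bound $2B(1 + g(2\delta))$. Since $\lim_{\delta \to 0} g(2\delta) = 0$, I can arrange $\delta$ small enough at the start that $g(2\delta) \le \vp$, and setting $s_n := t_{f(n)}$ then completes the proof. The only substantive step in this reduction is the $L^1$-approximation comparison that converts the Parseval identity for the continuous frame into the perturbed discrete frame bounds demanded by Theorem \ref{T:samp}; the genuine combinatorial difficulty has already been absorbed into the proofs of Theorem \ref{T:samp} and, through it, Theorem \ref{T:reduction}.
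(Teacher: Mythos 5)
Your proposal is correct and follows essentially the same route as the paper: apply Lemma \ref{L:discretize} to replace $\Psi$ by a countably-valued step function, observe that the weighted samples $(\mu(X_j)^{1/2}\Psi(t_j))_j$ form a discrete frame with bounds close to $1$, and then invoke Theorem \ref{T:samp}. The only difference is cosmetic — you bound the perturbation via the difference-of-squares factorization where the paper expands the square and applies Cauchy--Schwarz — and both yield the same $O(\delta)$ control on the frame bounds.
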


\begin{proof}
 Without loss of generality we assume that $H$ is infinite dimensional.
 Let $1>\vp>0$.  Let $\vp_0>0$ such that $g(\vp_0) <\vp$, where $g$ is the function guaranteed to exist from Theorem \ref{T:samp}.   

By Lemma \ref{L:discretize} there exists a partition $(X_i)_{i\in\N}$ of $X$ and $(t_j)_{j\in\N}\subset X$ such that $t_j\in X_i$ for all $j\in\N$ and $\|\Psi(t)-\Psi(t_j)\|<\frac{\vp_0}{6}$ for all $t\in X_j$ and
$\int \left\| \Psi(t)-\sum_{i\in\N}\Psi(t_i) 1_{X_i}(t)\right\| d\mu(t)<\frac{\vp_0}{3}.
$  Let $\Phi:X\rightarrow H$ be given by $\Phi:=\sum \Psi(t_i) 1_{X_i}$.  We will prove that $\Phi$ is a continuous frame with upper frame bound $1+\vp_0$ and lower frame bound $1-\vp_0$.
Let $x\in H$ with $\|x\|=1$.  We first estimate the lower frame bound.
\begin{align*}
\|x\|^2&= \int |\langle x, \Psi(t)\rangle|^2 d\mu(t)\\
&= \int |\langle x, (\Psi(t)-\Phi(t)+\Phi(t))\rangle|^2 d\mu(t)\\
&\leq \int |\langle x, \Phi(t)\rangle|^2+2|\langle x, (\Psi(t)-\Phi(t))\rangle||\langle x,\Phi(t)\rangle|+|\langle x,(\Psi(t)-\Phi(t))\rangle|^2 d\mu(t)\\
&\leq \int |\langle x, \Phi(t)\rangle|^2+2\|\Psi(t)-\Phi(t)\|\|\Phi(t)\|+\|\Psi(t)-\Phi(t)\|^2 d\mu(t)\\
&\leq \int |\langle x, \Phi(t)\rangle|^2+2\|\Psi(t)-\Phi(t)\|+\|\Psi(t)-\Phi(t)\|\frac{\vp_0}{3} d\mu(t) \\
&\qquad \textrm{ as }\|\Phi(t)\|\!\leq\! 1\textrm{ and }\|\Psi(t)-\Phi(t)\|\!<\!\frac{\vp_0}{3} \\
&\leq \int |\langle x, \Phi(t)\rangle|^2 d\mu(t)+2(\frac{\vp_0}{3}) +(\frac{\vp_0}{3})^2\qquad \bigg( \textrm{as }\int \| \Psi(t)-\Phi(t)\| d\mu(t)< \frac{\vp_0}{3} \biggr)
\\
&<\int |\langle x, \Phi(t)\rangle|^2 d\mu(t)+\vp_0
\end{align*}
Thus, $\Phi$ has lower frame bound  $1-\vp_0$.  We similarly estimate the upper frame bound.
\begin{align*}
\|x\|^2&= \int |\langle x, \Psi(t)\rangle|^2 d\mu(t)\\
&= \int |\langle x, (\Psi(t)-\Phi(t)+\Phi(t))\rangle|^2 d\mu(t)\\
&\geq \int |\langle x, \Phi(t)\rangle|^2-2|\langle x, (\Psi(t)-\Phi(t))\rangle||\langle x,\Phi(t)\rangle|+|\langle x,(\Psi(t)-\Phi(t))\rangle|^2 d\mu(t)\\
&> \int |\langle x, \Phi(t)\rangle|^2 d\mu(t)-2(\frac{\vp_0}{3}) >\int |\langle x, \Phi(t)\rangle|^2 d\mu(t)-\vp_0\\
\end{align*}
Thus, $\Phi$ has upper frame bound $1+\vp_0$.

We have that for all $x\in H$, $\int |\langle x, \Phi(t)\rangle|^2 d\mu(t)=\sum |\langle x, \Psi(t_i)\rangle|^2\mu(X_i)$.
  Thus, $(\sqrt{\mu(X_i)}\Psi(t_i))_{i\in\N}$ is a frame of $H$ with lower frame bound $1-\vp_0$ and upper frame bound
  $1+\vp_0$.   
  Therefore, by Theorem \ref{T:samp}, there exists a sequence of natural numbers $I$ such that 
  $
 (\Psi(t_i))_{i\in I}
    $
    is a frame for $H$ with bounds $A(1 - \vp)$ and $2B(1 + \vp)$, as desired.
  \end{proof}

We now show that the complete solution to the discretization problem can be reduced to the special  case of Theorem \ref{T:disc}.  We restate and prove Theorem \ref{T:D} that was presented in the introduction.

\begin{thm}
Let $(X,\Sigma)$ be a measure space such that every singleton is measurable and let $\Psi:X\rightarrow H$ be measurable.  There exists $(t_j)_{j\in J}\in X^J$ such that $(\Psi(t_j))_{j \in J}$
 is a frame of $H$ if and only if there exists a positive, $\sigma$-finite measure $\nu$ on $(X,\Sigma)$ so that  $\Psi$ is a continuous frame of $H$ with respect to $\nu$ which is bounded $\nu$-almost everywhere.  
\end{thm}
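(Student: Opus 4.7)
My plan is to handle the two implications separately. The $(\Rightarrow)$ direction is essentially a direct construction of a measure that realizes the sampled frame as a continuous frame; for $(\Leftarrow)$ the real content is packaged inside Theorem~\ref{T:disc}.

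For the backward direction $(\Rightarrow)$, given a sampling $(t_j)_{j\in J}$ such that $(\Psi(t_j))_{j\in J}$ is a frame of $H$ with bounds $A,B$, I would first discard indices for which $\Psi(t_j)=0$ (they contribute nothing to any frame sum) and then define $\nu(F) := |\{j\in J : t_j\in F\}|$ on $\Sigma$. This is a well-defined measure because singletons are measurable, and each singleton receives finite measure: the Bessel bound forbids a nonzero vector $v = \Psi(t_j)$ from being repeated infinitely often, since then $\sum_k |\langle v,\Psi(t_{j_k})\rangle|^2 = \infty$. Hence $\nu$ is $\sigma$-finite, because $\{t_j : j\in J\}$ is a countable union of finite-measure singletons whose complement is a measurable $\nu$-null set. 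The integral $\int|\langle x,\Psi(t)\rangle|^2\, d\nu(t)$ collapses to $\sum_j |\langle x,\Psi(t_j)\rangle|^2$, yielding the continuous frame inequality with the same constants, and the Bessel bound $B$ forces $\|\Psi(t_j)\|^2\le B$ at every $j$, so $\Psi$ is bounded on $\supp(\nu)$ and hence $\nu$-a.e.

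For the forward direction $(\Leftarrow)$, let $E\subseteq X$ be the null set off of which $\|\Psi(t)\|\le C$. Passing to $X' := X\setminus E$ and the restricted measure, $\Psi$ is a continuous frame with some frame operator $T$. By Lemma~\ref{L:parseval}, $\Phi := T^{-1/2}\Psi$ is a continuous Parseval frame on $(X',\nu|_{X'})$ with $\|\Phi(t)\|\le M := \|T^{-1/2}\|\,C$. To match the hypotheses of Theorem~\ref{T:disc} (which demands a continuous Parseval frame bounded by $1$), I would rescale both the map and the measure: set $\Phi' := M^{-1}\Phi$ and $\nu' := M^2\,\nu|_{X'}$. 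A direct check shows $\Phi'$ is a continuous Parseval frame with respect to $\nu'$ and satisfies $\|\Phi'(t)\|\le 1$. Theorem~\ref{T:disc} then supplies $(t_j)_{j\in J}\in (X')^J$ for which $(\Phi'(t_j))_{j\in J}$ is a discrete frame of $H$, and applying the bounded invertible operator $MT^{1/2}$ recovers $(\Psi(t_j))_{j\in J}$ as a discrete frame of $H$.

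The only genuine obstacle is the bookkeeping around the rescaling step in $(\Leftarrow)$: one must modify the map and the reference measure simultaneously so that the Parseval property and the unit-ball bound hold together. Beyond that, the argument is a short assembly of Lemma~\ref{L:parseval} and Theorem~\ref{T:disc}, with all of the difficult machinery (Marcus-Spielman-Srivastava, the partition theorem, and Theorem~\ref{T:samp}) already absorbed into Theorem~\ref{T:disc}.
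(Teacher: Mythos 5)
Your proposal is correct and follows essentially the same route as the paper: for the sampling-to-measure direction you build the counting-with-multiplicity measure supported on the sample points (the paper's $\nu(A)=\sum_{t\in A\cap\cup t_j}n_t$), and for the measure-to-sampling direction you normalize by the frame operator via Lemma~\ref{L:parseval}, rescale the map and the measure simultaneously to land in the hypotheses of Theorem~\ref{T:disc}, and pull back by the invertible operator. Your explicit discarding of indices with $\Psi(t_j)=0$ and the Bessel-bound argument for finiteness of each $n_t$ is a slightly more careful verification of $\sigma$-finiteness than the paper spells out, but it is not a different approach.
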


\begin{proof}
 We first assume that there exists a positive $\sigma$-finite measure $\nu$ on $X$ so that  $\Psi:X\rightarrow H$ is a continuous frame of $H$ with respect to $\nu$ which is bounded almost everywhere.  By passing to a measurable subset of $X$ of full measure we may assume that $\Psi$ is bounded.  Let $T:H\rightarrow H$ be the frame operator of $\Psi$.  Note that $T$ is a positive self-adjoint invertible operator.  Define $\Phi:X\rightarrow H$ by $\Phi(x)= T^{-1/2} \Psi(x)$.
Thus, $\Phi$ is a continuous Parseval frame by Lemma \ref{L:parseval} and $\Phi$ is bounded as it is the composition of bounded functions.  There exsists $C>0$ such that $\|\Phi(t)\|\leq C$ for all $t\in X$.  We now define a measure $\nu_0$ on $X$ by $\nu_0=C^2 \nu$.  Then, $C^{-1}\Phi:X\rightarrow H$ is a continuous Parseval frame with respect to $\nu_0$ such that $\|C^{-1}\Phi(t)\|\leq 1$ for all $t\in H$.  Thus, there exists $(t_j)_{j\in J}\in X^J$ such that $(C^{-1}\Phi(t_j))_{j\in J}$ is a frame of $H$ by Theorem \ref{T:disc}.  We have that $(C^{-1} T^{-1/2} \Psi(t_j))_{j\in J}$ is a frame of $H$ and hence $( \Psi(t_j))_{j\in J}$ is a frame of $H$.

We now assume that there exists $(t_j)_{j\in J}\in X^J$ such that $(\Psi(t_j))_{j \in J}$
 is a frame of $H$. Note that $(\Psi(t_j))_{j \in J}$ is bounded as it is a frame.
  It is possible for some points to be sampled multiple times.
    For $t\in \cup_{j\in J} t_j$, we let $n_t=|\{j\in J\,:\,t=t_j\}|$.  We define a measure $\nu$ on $X$ by $\nu(A)=\sum_{t\in A\cap  \cup_{j\in J} t_j }n_t$.
    As singletons are measurable, we have that $\nu$ is a measure on $(X,\Sigma)$ and the frame operator for the map $\Psi:\rightarrow H$ with respect to $\nu$ is the same as the frame operator of
$(\Psi(t_j))_{j \in J}$.  Thus, $\Psi$ is a continuous frame with respect to $\nu$ which is bounded on $\cup_{j\in J} t_j$ and  $\nu(X\setminus \cup_{j\in J} t_j)=0$.
\end{proof}


\begin{thebibliography}{99}

\bibitem[A]{A} J. Anderson, {\em Restrictions and representations of states on $C^*$-algebras}, Trans. AMS {\bf 249} (1979), 303-329.\\

\bibitem[AAG1]{AAG1} S.T. Ali, J.-P. Antoine, J.-P. Gazeau, {\em Continuous frames in Hilbert spaces}, Ann. Phys. {\bf 222 } (1993), 1-37.\\

\bibitem[AAG2]{AAG2} S.T. Ali, J.-P. Antoine, J.-P. Gazeau, {\em Coherent States, Wavelets, and Their Generalizations.}
Graduate Texts in Contemporary Physics (Springer, New York, 2000)\\

\bibitem[BCEK]{BCEK} R. Balan, P.G. Casazza, D. Edidin, and G. Kutyniok, {\em Decomposition of Frames and a New Frame Identity}, Proceedings of SPIE, {\bf 5914} (2005), 1-10.\\

\bibitem[BCMS]{BCMS} M. Bownik, P. Casazza, A. W. Marcus, and D. Speegle, {\em Improved Bounds in Weaver and Feichtinger Conjectures}, preprint.\\

\bibitem[BF]{BF}J. J. Benedetto and M. Fickus, {\em Finite normalized tight frames}, Adv. Comput. Math.
{\bf 18} (2003), 357--385.\\

\bibitem[BFL]{BFL} K. Beanland, D. Freeman, and R. Liu, {\em Upper and lower estimates for Schauder frames and atomic decompositions}, Fund. Math., {\bf 58}, no. 2, (2014), 161-188.\\

\bibitem[BT]{BT} J. Bourgain and L. Tzafrir, {\em Invertibility of “large” sumatricies with applications to the geometry of Banach spaces and harmonic analysis}, Israel J. Math., {\bf 57} (1987) 137-223.\\

\bibitem[BPY]{BPY} J. Benedetto, A. Powell, and O. Yilmaz, {\em Sigma-Delta quantization and finite frames}, IEEE Inform. Theory, {\bf 52} (2006), 1990-2005.\\

\bibitem[CCLV]{CCLV} P. G. Casazza, O. Christensen, A. Lindner, and R. Vershynin, {\em Frames and the Feichtinger
conjecture}, Proc. Amer. Math. Soc. {\bf 133} (2005), 1025-1033.\\

\bibitem[CFMT]{CFMT} P. Casazza, M. Fickus, D.  Mixon, and J. Tremain, 
{\em The Bourgain-Tzafriri conjecture and concrete constructions of non-pavable projections}, Oper. Matrices {\bf 5} (2011), no. 2, 351-363. \\

\bibitem[C]{C} O. Christensen, {\em An introduction to frames and Riesz bases.}  Applied and Numerical Harmonic Analysis. (Birkh\"auser Boston, Inc., Boston, MA, 2003)\\
\bibitem[C2]{C2} O. Christensen, {\em Frame perturbations.} Proc. AMS, {\bf 123} (1995), 1217-1220.\\

\bibitem[DGM]{DGM}  I. Daubechies, A. Grossmann, and Y. Meyer, {\em Painless nonorthogonal expansions}, J. Math. Phys., {\bf 27}, no. 5, (1986),
1271-1283.\\

\bibitem[FR]{FR} M. Fornasier, H. Rauhut, {\em Continuous frames, function spaces, and the discretization problem}, J. Fourier Anal. Appl. {\bf 11} (2005), 245-287.\\

%\bibitem[DHW]{DHW} D. E. Dutkay, D. Han, and E. Weber {\em Continuous and discrete fourier frames for fractal measures}, Trans. AMS, {\bf 366}, no. 3, (2014), 1213-1235.\\


\bibitem[GKK]{GKK} V. K. Goyal, J. Kova$\tilde{c}$evi$\acute{c}$, and J. A. Kelner {\em Quantized frame
expansions with erasures}, ACHA {\bf 10} no. 3 (2001), 203--233.\\



\bibitem[KS]{KS} R. Kadison and I. Singer, {\em Extensions of pure states}, American Jour. Math. {\bf 81} (1959), 547-564.\\

\bibitem[KOPT]{KOPT} G. Kutyniok, K. Okoudjou, F. Philipp, and E. Tuley, {\em Scalable frames}, Linear Algebra Appl. {\bf 438}  (2013), no. 5, 2225-2238.\\


\bibitem[MSS]{MSS}A. W. Marcus, D. A. Spielman, and N. Srivastava, {\em Interlacing Families II: mixed characteristic polynomials and the Kadison-Singer problem}, Ann. of Math. {\bf 182} (2015), no. 1, 327-350.\\


\bibitem[NOU]{NOU} S. Nitzan, A. Olevskii, and A. Ulanovskii, {\em Exponential frames on unbounded sets}, Proc. AMS, {\bf 144} no. 1 (2016), 109-118.\\

\bibitem[S]{S} E. Schr\"odinger, {\em Der stetige \"Ubergang von der Mikro-zur Makromechanik}, Naturwissenschaften {\bf 14} (1926), 664-666.\\


\bibitem[W]{W} N. Weaver, {\em The Kadison-Singer problem in discrepancy theory}, Discrete Math. {\bf 278} (2004), 227-239.\\
\end{thebibliography}
\end{document}